\providecommand{\abs}[1]{\left|#1\right|}
\providecommand{\norm}[1]{\left \| #1\right \|}
\theoremstyle{plain}
\newtheorem{theorem}{Theorem}[]
\theoremstyle{plain}
\newtheorem{lemma}{Lemma}[]
\theoremstyle{plain}
\newtheorem{corollary}{Corollary}[]
\theoremstyle{plain}
\newtheorem{proposition}{Proposition}[]
\theoremstyle{remark}
\newtheorem{remark}{Remark}[]
\title{Existence of solutions to higher order Lane-Emden type systems}
\author{Delia Schiera \thanks{Università degli Studi dell’Insubria, Dipartimento di Scienza e Alta Tecnologia, via Valleggio 11, Como, Italy
\newline E-mail address: \texttt{d.schiera@uninsubria.it}}}
\date{}
\begin{document}
\maketitle

\begin{abstract}
We prove existence results for the Lane-Emden type system 
\[ \begin{cases}
\begin{aligned}
(-\Delta)^{\alpha} u=\abs{v}^q \\
(-\Delta)^{\beta} v= \abs{u}^p
\end{aligned} \text{ in } B_1 \subset \mathbb{R}^N \\
\frac{\partial^{r} u}{\partial \nu^{r}}=0, \, r=0, \dots, \alpha-1,  \text{ on } \partial B_1 \\
\frac{\partial^{r} v}{\partial \nu^{r}}=0, \, r=0, \dots, \beta-1, \text{ on } \partial B_1.
\end{cases}
 \]
where $B_1$ is the unitary ball in $\mathbb{R}^N$, $N >\max \{2\alpha, 2\beta \}$, $\nu$ is the outward pointing normal, $\alpha, \beta \in \mathbb{N}$, $\alpha, \beta \ge 1$ and $(-\Delta)^{\alpha}= -\Delta((-\Delta)^{\alpha-1})$ is the polyharmonic operator.  A continuation method together with a priori estimates will be exploited. Moreover, we prove uniqueness for the particular case $\alpha=2$, $\beta=1$ and $p, q>1$. 
\end{abstract}
\begin{keywords}
Polyharmonic operators, elliptic systems, higher order Dirichlet problems, continuation method, a priori estimates.
\end{keywords}\\
\begin{MScodes}
35J48, 35J58. 
\end{MScodes}


\section{Introduction}
We will be concerned with the study of existence of solutions to the following Lane-Emden type system (\cite{deFigueiredoFelmer94_2, HulshofVanderVorst93, Mitidieri93})
\begin{equation}\label{LE_D}
\begin{cases}
\begin{aligned}
(-\Delta)^{\alpha} u=\abs{v}^{q-1} v \\
(-\Delta)^{\beta} v= \abs{u}^{p-1} u
\end{aligned} \quad \text{ in } \Omega \subset \mathbb{R}^N \\
\frac{\partial^{r} u}{\partial \nu^{r}}=0, \, r=0, \dots, \alpha-1, \text{ on } \partial \Omega \\
\frac{\partial^{r} v}{\partial \nu^{r}}=0, \, r=0, \dots, \beta-1, \text{ on } \partial \Omega
\end{cases}
\end{equation}
where $\Omega$ is a smooth bounded domain, $N >\max \{2\alpha, 2\beta \}$, $\nu$ is the outward pointing normal, $\alpha, \beta \in \mathbb{N}$, $\alpha, \beta \ge 1$ and $(-\Delta)^{\alpha}= -\Delta((-\Delta)^{\alpha-1})$ is the polyharmonic operator.
On the one hand, systems in which two nonlinear PDE are coupled in a Hamiltonian fashion, namely
\[
\begin{cases}
L_1 u = \frac{\partial H}{\partial v} (u, v) \\
L_2 v= \frac{\partial H}{\partial u} (u, v)
\end{cases}
\]
with given function $H \colon \mathbb{R}^2 \to \mathbb{R}$ and operators $L_1, L_2$, have attracted a lot of attention in the last two decades both from the Mathematical as well as Physical point of view, as those models describe, among many others, nonlinear interaction between fields, see \cite{Benci14, Yang01}. On the other hand, the polyharmonic operator appears in many different contexts, such as in the modeling of classical elasticity problems (in particular suspension bridges \cite{GazzolaGrunauSweers10}), as well as Micro Electro-Mechanical Systems (MEMS), see \cite{CassaniKaltenbacherLorenzi09} and references therein. 

\noindent In order to present our results, let us first briefly survey some existing literature about Lane-Emden type systems. 
Consider 
\begin{equation}\label{LaneEmden}
\begin{cases}
\begin{aligned}
-\Delta u &= \abs{v}^{q-1}v \\
-\Delta v &= \abs{u}^{p-1} u
\end{aligned} & \Omega \subset \mathbb{R^N}\\
u=v=0 & \partial \Omega
\end{cases}
\end{equation}
where $N >2$ and $\Omega$ is a smooth bounded domain, which is \eqref{LE_D} in the particular case $\alpha=\beta=1$. 
This problem turns out to be variational, namely weak solutions to \eqref{LaneEmden} are critical points of the functional
\begin{equation}\label{I2} I(u,v)=\int \nabla u \nabla v - \frac{1}{p+1} \int \abs{u}^{p+1} - \frac{1}{q+1} \int \abs{v}^{q+1}.  \end{equation}
In the case $u=v$, $p=q$, \eqref{LaneEmden} reduces to the single equation
\begin{equation}\label{eqnLE} 
\begin{cases}
-\Delta u = \abs{u}^{p-1} u &\Omega \subset \mathbb{R^N}\\
u=0 & \partial \Omega
\end{cases}
\end{equation}
and the corresponding functional is given by 
\begin{equation}\label{I} I(u)=\frac{1}{2} \int \abs{\nabla u}^2 - \frac{1}{p+1} \int \abs{u}^{p+1}. \end{equation}
The existence of weak solutions to  \eqref{eqnLE} can be proved by exploiting the Mountain Pass Theorem \cite{AmbrosettiRabinowitz73} if $p \in (1, \frac{N+2}{N-2})$, whereas if $p \ge \frac{N+2}{N-2}$ then no positive solutions do exist, due to the Pohozaev identity \cite{Pohozaev65}. Therefore, the value $\frac{N+2}{N-2}$ is the threshold between existence and non existence of solutions to \eqref{eqnLE}. Note that $\frac{N+2}{N-2} = p^* - 1$ where $p^*$ is the critical Sobolev exponent. 

\noindent When considering the case of systems of the form \eqref{LaneEmden}, the situation changes deeply, since the quadratic part of the functional \eqref{I2} turns out to be strongly indefinite and, as a consequence, classical variational results such as the Mountain Pass Theorem do not apply. However, it is still possible to get existence of solutions by exploiting the Linking Theorem of Benci and Rabinowitz \cite{BenciRabinowitz79} and reduction methods, see \cite{deFigueiredoFelmer94_2, HulshofVanderVorst93} as well as the surveys on this topic \cite{BonheureDosSantosTavares14, Ruf08} and the references therein.
More precisely, the following hyperbola
\begin{equation}\label{CH} \frac{1}{p+1} + \frac{1}{q+1} = \frac{N-2}{N}, \end{equation}
which has been introduced by Mitidieri \cite{Mitidieri93}, plays the role of critical threshold for \eqref{LaneEmden}, namely we have existence of solutions below and non existence above \eqref{CH}.
Note that the energy functional \eqref{I2} is well defined on $W^{1, s} \times W^{1, t}$ with $\frac{1}{s} + \frac{1}{t} =1$ provided that $\frac{1}{p+1} \ge \frac{1}{s} - \frac{1}{N}$ and $\frac{1}{q+1} \ge \frac{1}{t} - \frac{1}{N}$, due to the Sobolev embeddings \cite{AdamsFournier03}. Therefore, 
\[ \frac{1}{p+1} + \frac{1}{q+1} \ge1- \frac{2}{N} = \frac{N-2}{N} \]
namely $(p, q)$ lies below \eqref{CH}.
The question of existence / non existence of solutions to \eqref{LaneEmden} is still not entirely answered if $\Omega = \mathbb{R}^N$, except for the radial case (see \cite{Mitidieri93, SerrinZou98, SerrinZou94}) and for dimensions $N=3$ \cite{PolacikQuittnerSouplet07}  and $N=4$ \cite{Souplet09}. 

\noindent A natural extension to \eqref{LaneEmden} is the following
\begin{equation}\label{LE}
\begin{cases}
\begin{aligned}
(-\Delta)^{\alpha} u=\abs{v}^{q-1} v \\
(-\Delta)^{\beta} v= \abs{u}^{p-1} u
\end{aligned} \text{ in } \Omega \subseteq \mathbb{R}^N \\
B(u, v)=0 \text{ on } \partial \Omega
\end{cases}
\end{equation}
where $\alpha, \beta \in \mathbb{N}$, $\alpha, \beta \ge 1$ and $B(u,v)=0$ represents boundary conditions if any. The situation in which two different polyharmonic operators, namely $\alpha \ne \beta$, are taken into account seems to be the most challenging, since it does not exhibit a variational structure. \\
Consider first $\Omega=\mathbb{R}^N$. If $\alpha=\beta \ne 1$, it was proved in \cite{LiuGuoZhang06_2} that there exist infinitely many radially symmetric classical solutions, provided $(p,q)$  lies above the corresponding critical hyperbola
\begin{equation}\label{CHalpha}  \frac{1}{p+1} + \frac{1}{q+1} = \frac{N-2\alpha}{N}, \end{equation}
whereas non existence of positive radial solutions is showed in  \cite{CaristiDAmbrosioMitidieri08} if $p, q>1$ and $(p,q)$ is below the critical hyperbola \eqref{CHalpha}. 
In the non radial case only partial results are known, see \cite{Zhang07, LiuGuoZhang06, ArthurYan16}, see also \cite{MitidieriPohozaev01, CaristiDAmbrosioMitidieri08}, where the authors prove non existence results for positive supersolutions to \eqref{LE}. \\
As for $\Omega$ bounded, in \cite{Sirakov08, Sirakov05} the problem of existence to \eqref{LE} with Navier boundary conditions, namely 
\begin{align*} \Delta^{r} u&=0, \, r=0, \dots, \alpha-1, \text{ on } \partial \Omega \\
\Delta^{r}v&=0, \, r=0, \dots, \beta-1, \text{ on } \partial \Omega ,\end{align*}
is considered (actually more general operators are taken into account). In \cite{LiuGuoZhang06_2} a non existence result is established above the hyperbola \eqref{CHalpha} for positive classical radial solutions to \eqref{LE} with Navier boundary conditions, if $\alpha=\beta$ and $\Omega$ is a star-shaped domain.
In \cite{HulshofVanderVorst93} the existence of solutions to \eqref{LE} with $\alpha=\beta$ in the subcritical case, namely below \eqref{CHalpha}, with $p, q>1$ and Navier boundary conditions, is stated.

\noindent At the best of our knowledge, the Dirichlet case \eqref{LE_D} has not been considered yet,  not even in the case $\alpha=\beta$. 
It is worth pointing out that Dirichlet boundary conditions are physically relevant (for example, they appear in the modeling of the clamped plate) as well as mathematically challenging: indeed, differently from the Navier case, the polyharmonic operator combined with Dirichlet boundary conditions is not equivalent to the iteration of the Laplace operator with Dirichlet boundary conditions; therefore, system \eqref{LE_D} cannot be split into a system of $\alpha+\beta$ equations. Furthermore, we will have to deal with the lack of a maximum principle in generic bounded domains. 

Let us state our main results: 

\begin{theorem}\label{teorema2}
Consider the following
\begin{equation}\label{LaneEmden_limitato}
\begin{cases}
\begin{aligned}
(-\Delta)^{\alpha} u=\abs{v}^q \\
(-\Delta)^{\beta} v= \abs{u}^p
\end{aligned} \text{ in } B_1 \subset \mathbb{R}^N \\
\frac{\partial^{r} u}{\partial \nu^{r}}=0, \, r=0, \dots, \alpha-1,  \text{ on } \partial B_1 \\
\frac{\partial^{r} v}{\partial \nu^{r}}=0, \, r=0, \dots, \beta-1, \text{ on } \partial B_1.
\end{cases}
\end{equation}
where $B_1$ is the unitary ball in $\mathbb{R}^N$.
If the only solution to \eqref{LaneEmden_limitato} is the trivial one, and if $pq>1$, then there exists a classical nontrivial radially symmetric solution to the system 
\begin{equation}\label{sysnonexistence_intro}
\begin{cases}
\begin{aligned}
(-\Delta)^{\alpha} u = \abs{v}^q \\
(-\Delta)^{\beta} v = \abs{u}^p 
\end{aligned} \text{ in $\mathbb{R}^N$.}
\end{cases}
\end{equation}
\end{theorem}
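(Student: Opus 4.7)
The strategy is to establish the statement as the contrapositive of a standard existence-via-a-priori-bounds scheme of Gidas–Spruck type, adapted to radial higher-order Dirichlet problems. Namely, were the hypothesis to hold while no nontrivial radial solution of \eqref{sysnonexistence_intro} existed, one would be able to close a continuation argument that forces a nontrivial solution of \eqref{LaneEmden_limitato}, contradicting the hypothesis. The argument naturally splits into three blocks: setup of a homotopy and Leray–Schauder degree, failure of a uniform a priori bound, and the blow-up limit on $\mathbb{R}^N$.

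For the first block, I would work in a radial Banach space such as $X=C_{\mathrm{rad}}(\overline{B_1})\times C_{\mathrm{rad}}(\overline{B_1})$, recasting \eqref{LaneEmden_limitato} as a fixed-point equation $(u,v)=T(u,v)$ with $T$ compact via the Green operators of the Dirichlet polyharmonic $(-\Delta)^\alpha$ and $(-\Delta)^\beta$ on $B_1$, which are positive thanks to Boggio's explicit formula. Embed the problem in a homotopy $F_t(u,v)=0$, $t\in[0,1]$, whose $t=0$ end-point has trivial kernel with known index, and whose $t=1$ end-point is \eqref{LaneEmden_limitato}. By hypothesis the $t=1$ degree on a small neighbourhood of the only (trivial) solution is computable, while the superlinearity $pq>1$ makes the $t=0$ index different; homotopy invariance then forces either a nontrivial solution at $t=1$ (excluded) or the breakdown of a uniform a priori bound along the homotopy.

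Assuming thus the a priori bound fails, there exist $t_n\in[0,1]$ and radial solutions $(u_n,v_n)$ with $\|(u_n,v_n)\|_\infty\to\infty$. Let $a,b>0$ be uniquely determined by the scaling relations $a-2\alpha=bq$, $b-2\beta=ap$, solvable for $pq>1$, and set
\[
\widetilde u_n(y)=\lambda_n^{a}u_n(x_n+\lambda_n y),\qquad
\widetilde v_n(y)=\lambda_n^{b}v_n(x_n+\lambda_n y),
\]
where $x_n\in \overline{B_1}$ and $\lambda_n\to 0$ are chosen so that the rescaled pair has unit size in a suitable norm. The system \eqref{sysnonexistence_intro} is invariant under this rescaling, so $(\widetilde u_n,\widetilde v_n)$ solves it on the dilated domain $\lambda_n^{-1}(B_1-x_n)$; uniform bounds combined with interior Schauder estimates for the iterated Laplacian give a subsequential $C^{2\alpha}_{\mathrm{loc}}\times C^{2\beta}_{\mathrm{loc}}$ limit that is a classical solution of \eqref{sysnonexistence_intro} on either $\mathbb{R}^N$ or a half-space, radially symmetric in the first case since the approximating solutions are.

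The main obstacle is controlling the blow-up point $x_n$ so that the limit lives on all of $\mathbb{R}^N$, and ensuring its nontriviality. Because we work in the radial class and the Dirichlet boundary conditions force $u_n$ and all its normal derivatives of order up to $\alpha-1$ (and similarly for $v_n$) to vanish on $\partial B_1$, one expects the relevant rescaled maximum not to concentrate at the boundary; making this rigorous is the technically delicate point, as no standard maximum principle is available for the Dirichlet polyharmonic in arbitrary domains. On the ball, however, Boggio's Green function provides the required positivity and boundary decay to localise the blow-up at an interior radius, while a judicious choice of $x_n$ as the maximiser of a scale-invariant combination of $|u_n|^{1/a}$ and $|v_n|^{1/b}$ preserves the normalisation in the limit and hence non-triviality. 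Once interior concentration is secured, the rescaled limit is automatically a classical, nontrivial, radially symmetric solution to \eqref{sysnonexistence_intro} on $\mathbb{R}^N$, completing the proof.
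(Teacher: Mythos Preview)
Your overall architecture---continuation via Leray--Schauder degree followed by blow-up to produce an entire solution---is the same as the paper's, and your contrapositive framing is logically equivalent to the direct argument given there. Two points, however, are handled differently and in your sketch remain genuine gaps.

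First, the homotopy. The paper does not run a $[0,1]$-homotopy between two problems with distinct local indices; rather it perturbs the nonlinearities additively,
\[
(-\Delta)^\alpha u=(t+|v|)^q,\qquad (-\Delta)^\beta v=(t^{\vartheta}+|u|)^p,\qquad t\in[0,\infty),
\]
with a carefully chosen $\vartheta\in(1/p,q)$, and uses a global continuation lemma to obtain an \emph{unbounded} continuum of solutions through $(0,0,0)$. The exponent $\vartheta$ is what guarantees $t_n^\vartheta/\|u_n\|_\infty\to 0$ and $t_n/\|v_n\|_\infty\to 0$ along any unbounded branch, so that the additive perturbation vanishes in the blow-up limit. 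Your ``different indices at the two endpoints'' is not specified enough to see what produces the unbounded sequence or why the perturbation disappears after rescaling.

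Second, and more seriously, the boundary-concentration issue. You correctly flag this as the crux, but neither working in the radial class from the outset nor invoking Boggio's boundary decay actually prevents the rescaled maxima from drifting to $\partial B_1$; a radial positive function with high-order Dirichlet data can still peak arbitrarily close to the boundary. Your fallback---allowing a half-space limit---would require a Liouville theorem for the Dirichlet polyharmonic system on a half-space, which is not available. The paper takes a different route: it does \emph{not} restrict to radial functions a priori, but proves via a moving-planes argument (using the Green-function inequalities for $(-\Delta)^\alpha$ on the ball) that every solution of the perturbed family is automatically radially symmetric and strictly decreasing in $|x|$. The maximum is therefore always at the origin, $\operatorname{dist}(x_n,\partial B_1)=1$ for all $n$, and the blow-up limit lives on $\mathbb{R}^N$ with no half-space case to exclude. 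This moving-planes step is the real technical core of the proof, and your proposal does not supply an equivalent mechanism.
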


\noindent As a consequence, we have the following
\begin{corollary}\label{teorema}
Assume that $p,q >1$ and that one of the following conditions holds:
\begin{itemize}
\item[(i)]  $2\beta q + N + 2\alpha pq - N pq \ge 0 \text{ or } 2\alpha p + N + 2\beta pq - N pq \ge 0$
\item[(ii)] $p, q< \min \{ \frac{N+2\alpha}{N-2\beta}, \frac{N+2\beta}{N-2\alpha} \}$.
\end{itemize}
Then there exists a classical nontrivial solution to \eqref{LaneEmden_limitato}. 
\end{corollary}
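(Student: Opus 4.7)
The strategy is to apply Theorem \ref{teorema2} contrapositively. Suppose, for contradiction, that \eqref{LaneEmden_limitato} admits only the trivial solution under the assumptions of the corollary; since $pq>1$, Theorem \ref{teorema2} produces a nontrivial classical radial solution $(u,v)$ to
\[
(-\Delta)^\alpha u = |v|^q, \qquad (-\Delta)^\beta v = |u|^p \qquad \text{in } \mathbb{R}^N.
\]
It therefore suffices to show that under either (i) or (ii) no such nontrivial radial entire solution exists, yielding a contradiction.

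For condition (i), a direct algebraic manipulation identifies $2\beta q + N + 2\alpha pq - Npq \ge 0$ (respectively its symmetric counterpart) with $bq\ge N$ (respectively $ap\ge N$), where
\[
a=\frac{2(\alpha+\beta q)}{pq-1}, \qquad b=\frac{2(\beta+\alpha p)}{pq-1}
\]
are the natural scaling exponents of the system. These are precisely the ranges covered by the Liouville-type non-existence theorems for entire (super)solutions of Mitidieri--Pohozaev and Caristi--D'Ambrosio--Mitidieri (already cited in the introduction). Since the right-hand sides $|v|^q,|u|^p$ are nonnegative irrespective of the sign of $u,v$, the test-function/integration-by-parts identities from those references apply, and any nontrivial radial entire solution is ruled out.

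For condition (ii), the two-sided bounds $p,q<\min\{(N+2\alpha)/(N-2\beta),(N+2\beta)/(N-2\alpha)\}$ place the pair $(p,q)$ in a regime which is subcritical for each single polyharmonic equation with the other unknown frozen as datum. The plan is to exploit this to bootstrap integrability and decay of the radial entire solution via the fundamental-solution representation of $(-\Delta)^\alpha$ and $(-\Delta)^\beta$ applied iteratively; once enough decay at infinity is secured, a Pohozaev-type identity integrated on $B_R$ and sent to $R\to\infty$ forces $(u,v)\equiv 0$.

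The main obstacle is that the entire solution obtained from Theorem \ref{teorema2} is not a priori of constant sign, while the classical Liouville results are often phrased for positive (super)solutions. This is precisely why the system is written with $|v|^q,|u|^p$: the right-hand sides are nonnegative, so that $(-\Delta)^\alpha u\ge 0$ and $(-\Delta)^\beta v\ge 0$ and the Mitidieri--Pohozaev scheme applies without requiring $u,v$ themselves to be nonnegative. The more delicate point will be to verify, in condition (ii), that the subcritical bootstrap can be carried out without a sign assumption, which should follow by running the argument with $|u|$ and $|v|$ in place of $u,v$.
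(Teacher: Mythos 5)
Your overall strategy --- argue by contradiction, invoke Theorem~\ref{teorema2} to produce a nontrivial classical radial solution of the whole-space system, then kill it with a Liouville theorem --- is exactly the paper's route. The treatment of case (i) is correct and matches the paper: both appeal to the Mitidieri--Pohozaev nonexistence result (recorded as Theorem~\ref{GZL}), and your observation that only the nonnegativity of the right-hand sides $|v|^q$, $|u|^p$ is needed (so no sign condition on $u,v$ themselves) is the right point. The algebraic rewriting as $bq\ge N$, $ap\ge N$ with $a=2(\alpha+\beta q)/(pq-1)$, $b=2(\beta+\alpha p)/(pq-1)$ is correct but unnecessary, since Theorem~\ref{GZL} is already stated in the form needed.

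Case (ii) is where there is a genuine gap. You overlook that the paper's Section~\ref{sec:preliminaries} already records precisely the Liouville theorem required: Theorem~\ref{GLZ2} (Liu--Guo--Zhang, Theorem~1.2') says that for $1<p,q<\min\{\tfrac{N+2\alpha}{N-2\beta},\tfrac{N+2\beta}{N-2\alpha}\}$ the only classical solution of the entire-space system with right-hand sides $|v|^q,|u|^p$ is the trivial one, which immediately contradicts the output of Theorem~\ref{teorema2}. Instead of citing it, you sketch a substitute argument (a bootstrap of decay/integrability via the fundamental solutions of $(-\Delta)^\alpha$, $(-\Delta)^\beta$ followed by a Pohozaev identity on $B_R$, $R\to\infty$). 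As written this is only a plan --- you write ``the plan is to'' and ``should follow by'' --- and none of the key steps is carried out: the iteration is not set up, the decay rates are not derived, the Pohozaev identity for a polyharmonic system with $\alpha\ne\beta$ and sign-changing unknowns is not written down, and the claim that the frozen-datum subcriticality bounds are exactly what makes the scheme close is not verified. In effect you propose to reprove Theorem~\ref{GLZ2} from scratch and then stop before doing so. The shortest correct fix is to replace that paragraph by a direct appeal to Theorem~\ref{GLZ2}, noting only that the solution from Theorem~\ref{teorema2} is classical, so the hypotheses of that theorem are met.
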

\begin{remark}
Note that  if one takes $\alpha=\beta=1$ then \textit{(i)} becomes
\[  \frac{1}{p+1} + \frac{1}{q+1} \ge 1- \frac{2}{N-2} \max \left( \frac{1}{p+1}, \frac{1}{q+1} \right) \]
which is the so called Serrin curve (see \cite{Mitidieri93, MitidieriPohozaev01}) and constitutes the threshold between existence and non existence of supersolutions to \eqref{LaneEmden}. 
\end{remark}
\begin{corollary}\label{teoremabis}
Consider the case $\alpha=\beta$ in \eqref{LaneEmden_limitato}. Assume that $p, q \ge 1$ not both equal to $1$ and that $(p,q)$ lies below the critical hyperbola \eqref{CHalpha}. Then there exists a classical nontrivial solution to \eqref{LaneEmden_limitato}. 
\end{corollary}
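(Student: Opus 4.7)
The natural strategy is a proof by contradiction based on Theorem~\ref{teorema2}. Since $p,q\ge 1$ are not both equal to $1$, one has $pq>1$, so the hypothesis of Theorem~\ref{teorema2} on the exponents is automatically satisfied. Assuming, for contradiction, that the only solution to \eqref{LaneEmden_limitato} is the trivial one, Theorem~\ref{teorema2} produces a classical nontrivial radially symmetric solution $(u,v)$ to the system $(-\Delta)^{\alpha} u = \abs{v}^q$, $(-\Delta)^{\alpha} v = \abs{u}^p$ on $\mathbb{R}^N$ (recall we are in the case $\alpha=\beta$). The goal is then to rule out such a solution using the non-existence result of \cite{CaristiDAmbrosioMitidieri08}, which forbids positive classical radial solutions in $\mathbb{R}^N$ precisely when $\alpha=\beta$, $p,q>1$, and $(p,q)$ lies strictly below the critical hyperbola \eqref{CHalpha}.

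The key intermediate step is therefore to upgrade $(u,v)$ to a positive solution, so that the cited non-existence result applies. Since $(-\Delta)^{\alpha}u=\abs{v}^q\ge 0$ and $(-\Delta)^{\alpha}v=\abs{u}^p\ge 0$ throughout $\mathbb{R}^N$, and the limit profile obtained in Theorem~\ref{teorema2} is a rescaled blow-up of bounded solutions on $B_1$ (and hence inherits decay at infinity), I would represent $u$ and $v$ by Riesz potentials of order $2\alpha$ acting on the respective right-hand sides. This gives $u=I_{2\alpha}(\abs{v}^q)$ and $v=I_{2\alpha}(\abs{u}^p)$, hence $u>0$ and $v>0$ unless one of them is identically zero, in which case the other must vanish too, contradicting nontriviality of $(u,v)$. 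With positivity of $(u,v)$ in hand, \cite{CaristiDAmbrosioMitidieri08} delivers the desired contradiction whenever $p,q>1$.

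For the remaining boundary cases with exactly one exponent equal to $1$ (and the other therefore strictly greater than $1$, with $(p,q)$ still below the critical hyperbola), one perturbs that exponent slightly to land in the strictly subcritical regime $p,q>1$ already treated. Then, applying the existence conclusion to the perturbed problems and passing to the limit in the resulting sequence of solutions using the a priori estimates established in the continuation-method part of the paper produces a nontrivial solution to the original \eqref{LaneEmden_limitato}.

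The principal obstacle is precisely the positivity step: as emphasised in the Introduction, the polyharmonic operator does not enjoy a maximum principle on generic domains, so one cannot simply invoke a comparison argument. The Riesz-potential representation substitutes for this, but its validity hinges on sufficient decay of $(u,v)$ at infinity, a property that must be read off carefully from the rescaling procedure producing the $\mathbb{R}^N$ solution in Theorem~\ref{teorema2}.
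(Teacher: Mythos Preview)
Your overall contradiction scheme via \autoref{teorema2} is correct, but the detours you take after obtaining the radial entire solution are both unnecessary and contain genuine gaps.

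The paper's argument is one line: \autoref{teorema2} combined with \autoref{nethradial}. Note that \autoref{nethradial} (stated in Section~\ref{sec:preliminaries}) rules out \emph{any} classical radially symmetric solution to the entire system when $\alpha=\beta$, $p,q\ge 1$ not both $1$, and $(p,q)$ is below the critical hyperbola. There is no positivity hypothesis and no restriction to $p,q>1$; the result already covers the boundary cases $p=1$ or $q=1$. So neither your Riesz-potential step nor your perturbation step is needed.

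Beyond being superfluous, both steps have problems. For the Riesz representation you need $\abs{v}^q,\abs{u}^p\in L^1(\mathbb{R}^N)$ or at least suitable decay of $(u,v)$ at infinity, and you yourself flag this. But the blow-up construction in \autoref{lemblowup} gives only local $C^{2\alpha}\times C^{2\beta}$ convergence on compacts; no decay whatsoever is established for the limit, so the representation $u=I_{2\alpha}(\abs{v}^q)$ is not justified. (If you did want non-negativity, it comes for free: the approximants $u_n,v_n$ are positive by \autoref{boggio}, hence so is the pointwise limit.) The perturbation argument for $p=1$ or $q=1$ is also not substantiated: the a priori estimates in the paper are derived along the continuum $\mathcal{C}$ for a \emph{fixed} pair $(p,q)$, not uniformly in $(p,q)$, so you cannot simply pass to the limit in a sequence of solutions with varying exponents without additional work.

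In short: invoke \autoref{nethradial} directly and the proof is complete.
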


\begin{figure}
\centering
\includegraphics[width=0.7\textwidth]{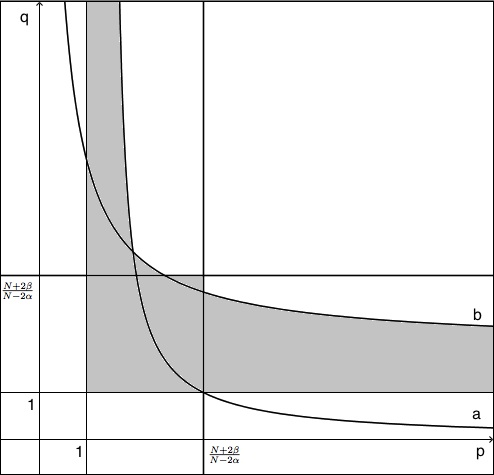}
\caption{The grey area is the region of existence of solutions to \eqref{LaneEmden_limitato} given by \autoref{teorema} in the case $\frac{N+2\alpha}{N-2\beta}> \frac{N+2\beta}{N-2\alpha}$. The curves $a$ and $b$ represent respectively $2\beta q + N + 2\alpha pq - N pq =0$ and $2\alpha p + N + 2\beta pq - N pq =0$. }
\label{curve1}
\end{figure}
\begin{figure}
\centering
\includegraphics[width=0.7\textwidth]{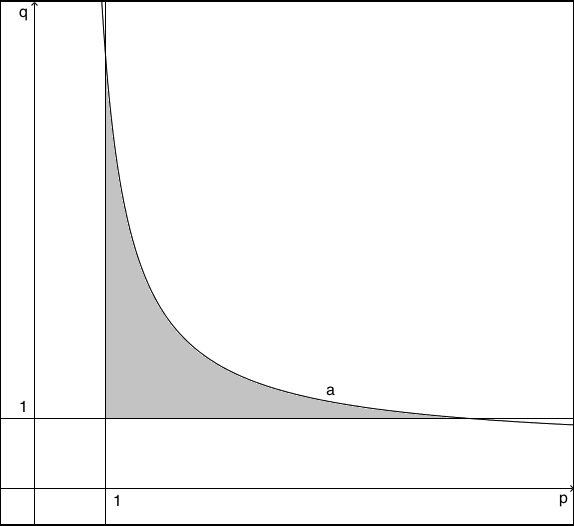}
\caption{The grey area is the region of existence of solutions to \eqref{LaneEmden_limitato} given by \autoref{teoremabis}. The curve $a$ represents the critical hyperbola \eqref{CHalpha}. }
\label{curve2}
\end{figure}

\noindent Uniqueness of solutions to \eqref{LaneEmden_limitato} with $\alpha, \beta \in \mathbb{N}$ turns out to be a challenging problem. So far, only the second order case $\alpha=\beta=1$ has been taken into account, see \cite{Dalmasso04}.
Actually, only partial results are known even in the case of polyharmonic equations. We refer to \cite{Dalmasso95} for the proof of uniqueness of positive solutions to 
\[ \begin{cases}
\Delta^2 u = \abs{u}^{p-1} u \,  &B_1\\
u=\frac{\partial u}{\partial \nu}=0 \, &\partial B_1
\end{cases} \]
see also \cite{FerreroGazzolaWeth07}, and \cite{Dalmasso99} for uniqueness of solutions to polyharmonic equations of higher order with sublinear nonlinearities. 

\noindent However, in the case $\alpha=2$ and $\beta=1$ we prove the following 
 
\begin{theorem}\label{coruniq}
Let us consider
\begin{equation}\label{21}
\begin{cases}
\begin{aligned}
\Delta^2 u=\abs{v}^q \\
-\Delta v= \abs{u}^p
\end{aligned} \text{ in } B_1 \\
u=\frac{\partial u}{\partial \nu}=v=0 \text{ on } \partial B_1
\end{cases}
\end{equation}
where $p, q > 1$.  
Then there exists at most one nontrivial solution to \eqref{21}.
\end{theorem}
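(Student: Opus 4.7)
My plan is to reduce the uniqueness question to an ODE on the radial variable and attack it by a shooting argument. The strategy proceeds in three steps: first I would show that any nontrivial solution is positive in $B_1$; then I would prove it is radially symmetric and strictly decreasing in the radial variable; finally I would use ODE tools (a shooting argument combined with a Pohozaev-type identity) to show that the resulting radial boundary value problem admits at most one solution.

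For positivity, let $(u,v)$ be a nontrivial classical solution. From $-\Delta v=|u|^p\ge 0$ with $v=0$ on $\partial B_1$ the classical maximum principle gives $v\ge 0$, and from $\Delta^2 u=|v|^q\ge 0$ with $u=\frac{\partial u}{\partial \nu}=0$ on $\partial B_1$ Boggio's principle for the clamped-plate Green's function in the ball yields $u\ge 0$. The coupling forbids $u\equiv 0$ or $v\equiv 0$ separately by non-triviality, and the strong maximum principle for $-\Delta$ together with a Hopf-type property for the biharmonic on $B_1$ upgrade this to $u,v>0$ in $B_1$. For the symmetry, I would write the problem in integral form using the positive Green's functions $G_1$ of $-\Delta$ and $G_2$ of $\Delta^2$ in $B_1$ with the corresponding Dirichlet conditions and apply a coupled moving plane argument in the integral formulation (in the spirit of Chen--Li--Ou and of its polyharmonic adaptations due to Berchio--Gazzola--Weth), obtaining that $u$ and $v$ are radial about $0$ and strictly radially decreasing.

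In radial coordinates the system becomes a sixth-order ODE on $(0,1)$ subject to the regularity conditions $u'(0)=u'''(0)=v'(0)=0$ at the origin and the three boundary conditions $u(1)=u'(1)=v(1)=0$ at $r=1$. I would formulate this as a shooting problem in the three free parameters $(u(0),u''(0),v(0))$ and prove injectivity of the associated shooting map at the zero level. The ingredients that I expect to employ are a radial Pohozaev/Rellich-type identity for the biharmonic--Laplace pair, which produces a sign-definite boundary term thanks to $p,q>1$, together with Sturm-type oscillation/comparison arguments for the linearization around a putative radial solution, broadly adapting the strategy of Dalmasso \cite{Dalmasso04} for the second-order case $\alpha=\beta=1$ to the present mixed-order Dirichlet setting.

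The hard part will be this last step. Since $\alpha\neq\beta$ the system is not variational, so Nehari-type uniqueness tools are unavailable; moreover, the Dirichlet conditions on the biharmonic prevent the standard trick of splitting $\Delta^2 u=v^q$ into a cascade of two Poisson problems with known boundary data, so the shooting analysis must be carried out directly on the full sixth-order ODE rather than on a simpler iterated second-order one. Controlling the extra free parameter $u''(0)$ (which has no second-order analogue) and extracting the right sign information from the Pohozaev boundary term will be the technical crux of the argument.
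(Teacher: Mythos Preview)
Your first two steps (positivity via Boggio and the maximum principle, radial symmetry via a coupled moving-plane argument) are exactly what the paper does and are fine. The divergence is in Step~3, and there you have a genuine gap rather than an alternative proof.

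The paper does \emph{not} use a Pohozaev/Rellich identity or a linearised Sturm analysis for uniqueness. It uses instead the scaling invariance of the power nonlinearities, which is in fact the core of the Dalmasso approach you cite. Concretely: given two nontrivial radial solutions $(u,v)$ and $(w,z)$, the paper rescales the second one as $\tilde w(r)=\lambda^s w(\lambda r)$, $\tilde z(r)=\lambda^t z(\lambda r)$ with $s=\frac{2q+4}{pq-1}$, $t=\frac{2+4p}{pq-1}$, so that $(\tilde w,\tilde z)$ solves the same equations on $[0,1/\lambda]$, and fixes $\lambda$ by $\tilde w(0)=u(0)$. The crux is then to show that the remaining two initial data also match, i.e.\ $\tilde z(0)=v(0)$ and $\Delta\tilde w(0)=\Delta u(0)$. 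This is done by a sign-tracking argument: assuming a mismatch, one builds a (possibly infinite) increasing sequence $R_0<R_1<R_2<\cdots$ at which $u-\tilde w$, $v-\tilde z$, $\Delta(u-\tilde w)$ successively vanish, and reaches a contradiction either via Gronwall at the accumulation point (infinite case) or via the boundary conditions and Hopf at $r=\min\{1,1/\lambda\}$ (finite case). Once all initial data agree, Gronwall on the first-order integral system $U-W=\int_0^r K(r,s)\bigl(F(W)-F(U)\bigr)\,ds$ with $F(x,y,z)=(y,z^q,x^p)$ locally Lipschitz (here $p,q>1$ is used) gives $U\equiv W$, and the boundary conditions force $\lambda=1$.

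Your proposal misses this mechanism entirely. A Pohozaev-type identity produces non-existence thresholds, not uniqueness, and you give no indication of how a sign-definite boundary term would separate two distinct solutions of the same problem. The linearised oscillation route would require controlling a sixth-order linear ODE with three shooting parameters; this is far harder than the two-parameter second-order case, and you offer no concrete tool for it. More importantly, the key structural fact that makes the problem tractable---the exact scaling invariance of $|u|^p$, $|v|^q$---never enters your plan, whereas it is precisely what the paper (and Dalmasso) exploits to reduce uniqueness to an ODE initial-value uniqueness statement plus a one-parameter comparison.
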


\noindent In order to prove \autoref{teorema2}, we borrow a few ideas from \cite{AziziehClementMitidieri02}, where an Hamiltonian system with $(p,q)$-Laplace operators was considered. 
A continuation method together with a priori estimates are key ingredients. As for \autoref{coruniq}, we exploit an idea originally due to Gidas, Ni and Nirenberg \cite{GidasNiNirenberg79}, see also \cite{CuiWangShi07, Dalmasso04, Dalmasso95}. 
Since the argument is quite involved, let us split it into four steps:

\emph{Step 1.}
By exploiting the Leray-Schauder degree and a continuation argument, one proves that if the only solution to \eqref{LaneEmden_limitato} is the trivial one, then there exists an unbounded sequence of solutions to a system $S_t$ depending on a scaling parameter $t$. 

\emph{Step 2.}
One performs a blow-up analysis; more precisely, by assuming the existence of a sequence of functions as in Step 1 and suitable geometric requirements on the position of global maxima of these functions, one gets the existence of a classical radial nontrivial solution to system \eqref{sysnonexistence_intro}. 
 
\emph{Step 3.}
The moving planes procedure together with Gidas-Ni-Nirenberg arguments are used to derive information on the location of global maxima of solutions to $S_t$ in order to establish the geometric assumptions required in Step 2. Actually, solutions to $S_t$ turn out to be radially symmetric and strictly decreasing in the radial variable.

\emph{Step 4.} 
By gluing results in the previous steps, one gets \autoref{teorema2}.
We then prove \autoref{teorema} and \autoref{teoremabis} by contradiction: in view of \autoref{teorema2}, one can exploit non existence results for \eqref{sysnonexistence_intro} proved in \cite{MitidieriPohozaev01, LiuGuoZhang06}. Moreover, we give the proof of \autoref{coruniq} by showing that radially symmetric, strictly decreasing and positive solutions to \eqref{21} are unique.

This paper is organized as follows: in Section 2 we recall some useful properties of the polyharmonic operator and non existence results in \cite{MitidieriPohozaev01, LiuGuoZhang06}; Sections 3-6 are devoted to the proof of the main Theorems, each Section corresponding to a Step of the proof as above. Finally, in Appendix A we recall for completeness a detailed proof of the non existence result by Mitidieri and Pohozaev \cite{MitidieriPohozaev01} exploited in Step 4, adapted to the case under consideration.

\section{Preliminaries}\label{sec:preliminaries}
Let us first recall some properties of the polyharmonic operator \cite{GazzolaGrunauSweers10}.
\begin{theorem}\label{lemKalpha}
Let us consider the following
\begin{equation}\label{K}
\begin{cases}
(-\Delta)^{\alpha} u = f &\Omega \\
\frac{\partial^{r} u}{\partial \nu^{r}}=0, \, r=0, \dots, \alpha-1,  &\text{ on } \partial \Omega
\end{cases}
\end{equation}
where $\Omega$ is a smooth bounded domain of $\mathbb{R}^N$, $N > 2\alpha$. If $f \in C^{r}(\bar{\Omega})$ with $r\ge 1$ then there exists a solution to \eqref{K} which is in $C^{2\alpha, \gamma}(\bar{\Omega})$ for any $\gamma \in (0,1)$. Moreover, the map $K_{\alpha} \colon C^{r}(\bar{\Omega}) \to C^{2\alpha}(\bar{\Omega})$ defined as $K_{\alpha}(f)=u$ is compact. 
\end{theorem}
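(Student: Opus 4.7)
The plan is to proceed in three standard stages: existence of a weak solution, interior/boundary regularity, and finally compactness of the solution operator. First I would set up the variational framework: the operator $(-\Delta)^{\alpha}$ with homogeneous Dirichlet data is realized as the natural self-adjoint positive operator associated with the coercive bilinear form
\[
a(u,\varphi) = \int_{\Omega} D^{\alpha} u \cdot D^{\alpha}\varphi
\]
on $H_0^{\alpha}(\Omega)$ (meaning $\int |\Delta^{\alpha/2}u|^2$ for even $\alpha$ and $\int |\nabla\Delta^{(\alpha-1)/2}u|^2$ for odd $\alpha$). Since $N>2\alpha$ and $f\in C^r(\bar\Omega)\subset L^2(\Omega)$, the right-hand side defines a continuous linear functional on $H_0^{\alpha}(\Omega)$, and Lax--Milgram yields a unique weak solution $u\in H_0^{\alpha}(\Omega)$.

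Next I would upgrade regularity via the Agmon--Douglis--Nirenberg elliptic theory for higher-order systems, applied to the polyharmonic operator with the complementing Dirichlet boundary conditions (which are known to satisfy the Lopatinskii--Shapiro condition; see the monograph of Gazzola--Grunau--Sweers already cited in the paper). Because $\partial\Omega$ is smooth and $f\in C^{r}(\bar\Omega)\subset C^{0,\gamma}(\bar\Omega)$ for every $\gamma\in(0,1)$, the Schauder estimates for the Dirichlet polyharmonic problem give
\[
\|u\|_{C^{2\alpha,\gamma}(\bar\Omega)} \le C\bigl(\|f\|_{C^{0,\gamma}(\bar\Omega)} + \|u\|_{L^{\infty}(\Omega)}\bigr),
\]
and the lower-order term can in turn be absorbed using the weak solvability already established. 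Hence $u\in C^{2\alpha,\gamma}(\bar\Omega)$ for every $\gamma\in(0,1)$.

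Finally, to see that $K_{\alpha}\colon C^{r}(\bar\Omega)\to C^{2\alpha}(\bar\Omega)$ is compact, fix a bounded sequence $\{f_n\}\subset C^r(\bar\Omega)$. The corresponding solutions $u_n = K_{\alpha}(f_n)$ are bounded in $C^{2\alpha,\gamma}(\bar\Omega)$ by the Schauder estimate above (with some fixed $\gamma\in(0,1)$). Since the embedding $C^{2\alpha,\gamma}(\bar\Omega)\hookrightarrow C^{2\alpha}(\bar\Omega)$ is compact by the Arzelà--Ascoli theorem applied to the derivatives of order up to $2\alpha$, a subsequence of $\{u_n\}$ converges in $C^{2\alpha}(\bar\Omega)$. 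Linearity of $K_{\alpha}$ then gives compactness as a map between the Banach spaces in question.

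The only real subtlety is the boundary regularity step: one must invoke the full higher-order Schauder theory under Dirichlet (not Navier) boundary conditions, which is where the iteration $(-\Delta)^\alpha \neq (-\Delta)\circ\cdots\circ(-\Delta)$ with Dirichlet data matters. This is precisely the point flagged in the introduction as a difficulty of the Dirichlet case, but once one quotes the Agmon--Douglis--Nirenberg estimates for the polyharmonic operator the rest is routine. Existence and compactness then follow automatically, with no need to exhibit or manipulate the Green's function explicitly.
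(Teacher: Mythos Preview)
Your argument is correct, but it takes a different route from the paper's. The paper does not bother with the variational setup or with Schauder theory at all: it simply quotes the $L^p$ elliptic regularity estimate $\|u\|_{W^{k,p}} \le C\|f\|_{W^{k-2\alpha,p}}$ for the Dirichlet polyharmonic problem, observes that $f\in C^r(\bar\Omega)$ lies in $W^{r,p}(\Omega)$ for every $p$, and then takes $p$ large enough so that the Sobolev embedding $W^{r+2\alpha,p}(\Omega)\hookrightarrow C^{r+2\alpha-1,\gamma}(\bar\Omega)$ is compact; since $r\ge 1$, this already lands in $C^{2\alpha,\gamma}(\bar\Omega)$. Thus the paper gets regularity and compactness in one stroke via Sobolev embeddings, whereas you obtain them via the H\"older--Schauder route (ADN estimates plus the compact inclusion $C^{2\alpha,\gamma}\hookrightarrow C^{2\alpha}$). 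Your approach has the advantage of being more explicit about where uniqueness and the boundary complementing condition enter; the paper's approach is shorter because the $W^{k,p}$ estimate already encodes existence, uniqueness, and the a priori bound, so there is no lower-order term to absorb. Either way the assumption $r\ge 1$ is what guarantees $f\in C^{0,\gamma}$ (your version) or that $r+2\alpha-1\ge 2\alpha$ after Sobolev embedding (the paper's version).
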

\begin{proof}
By elliptic regularity we know that if $f \in W^{k-2\alpha, p}(\Omega)$ for $k \ge 2\alpha$ then there exists a unique nontrivial solution to \eqref{K} such that
\begin{equation}\label{stimanorma} \norm{u}_{W^{k,p}} \le C \norm{f}_{W^{k-2\alpha, p}}. \end{equation}
Assume $f \in C^{r}(\bar{\Omega})$ with $r\ge 1$. Then $f \in W^{r, p}(\Omega)$ for any $p$. Hence, by \eqref{stimanorma} one has 
\[ \norm{u}_{W^{r+2\alpha,p}} \le C \norm{f}_{W^{r, p}} \]
for any $p$. 
By taking $p$ large enough, one has $W^{r+2\alpha,p}(\Omega) \hookrightarrow C^{r+2\alpha-1, \gamma}(\bar{\Omega})$ compactly for any $\gamma \in (0,1)$ (see \cite{AdamsFournier03}) and since $r\ge 1$ then $u \in C^{2\alpha, \gamma}(\bar{\Omega}) \subset C^{2\alpha}(\bar{\Omega})$.
\end{proof}
\noindent The next Theorem establishes a positivity preserving property due to Boggio \cite{Boggio1901}. 
\begin{theorem}[see Proposition 3.6 in \cite{GazzolaGrunauSweers10}]\label{boggio}
Let us assume $\Omega= B_1$ or $\Omega=\mathbb{R}^N$ in \eqref{K}. If $f \ge 0$, then $K_{\alpha}(f) \ge 0$ as well. In particular, if $f \ge g$, then $K_{\alpha}(f) \ge K_{\alpha}(g)$. 
\end{theorem}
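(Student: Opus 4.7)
The plan is to establish both statements via an explicit representation of $K_\alpha(f)$ through the Green's function of the polyharmonic Dirichlet problem, and to verify that this Green's function is pointwise nonnegative. Once positivity of the kernel is known, linearity of $K_\alpha$ immediately yields the monotonicity assertion: $f \ge g$ implies $K_\alpha(f-g) \ge 0$, hence $K_\alpha(f) \ge K_\alpha(g)$. So the entire content of the theorem is the sign of the Green's function on $B_1$ and on $\mathbb{R}^N$.

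For $\Omega = B_1$, my plan is to invoke Boggio's explicit formula: there exists a constant $k_{N,\alpha}>0$ such that
\[
G_\alpha(x,y) = k_{N,\alpha}\, |x-y|^{2\alpha-N} \int_1^{R(x,y)} \frac{(v^2-1)^{\alpha-1}}{v^{N-1}}\, dv,
\]
where $R(x,y) = |x-y|^*/|x-y|$ and $|x-y|^{*\,2} = |x|^2|y|^2 - 2x\cdot y + 1$. The integrand is manifestly $\ge 0$ on $[1,\infty)$, so the only thing that remains to check is that $R(x,y) \ge 1$ for $x,y \in B_1$, i.e.\ that $|x-y|^{*\,2} \ge |x-y|^2$. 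A direct computation gives
\[
|x-y|^{*\,2} - |x-y|^2 = (1-|x|^2)(1-|y|^2),
\]
which is nonnegative on $\overline{B_1}\times\overline{B_1}$. This settles positivity in the ball case. To make this fully rigorous I would also remark (or refer to the standard theory) that $G_\alpha$ indeed represents the solution operator $K_\alpha$ in the sense that $K_\alpha(f)(x) = \int_{B_1} G_\alpha(x,y) f(y)\,dy$, which follows from the uniqueness part of \autoref{lemKalpha} and the fact that $G_\alpha(\cdot,y)$ satisfies $(-\Delta)^\alpha G_\alpha(\cdot,y) = \delta_y$ with the prescribed Dirichlet data.

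For $\Omega = \mathbb{R}^N$ with $N>2\alpha$, I would use the fundamental solution $\Phi_\alpha(x,y) = c_{N,\alpha}\,|x-y|^{2\alpha-N}$ with $c_{N,\alpha}>0$. Then $K_\alpha(f)(x) = \int_{\mathbb{R}^N} \Phi_\alpha(x,y) f(y)\, dy$ gives a decaying solution, uniquely determined among those tending to zero at infinity, and positivity of the kernel is immediate since $c_{N,\alpha}>0$ and $|x-y|^{2\alpha-N}>0$.

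The main potential obstacle is not the sign argument itself, which is elementary once Boggio's formula is accepted, but rather justifying Boggio's formula in the first place: that one has to verify it produces a solution to $(-\Delta)^\alpha u = \delta_y$ in $B_1$ satisfying the full Dirichlet data $\partial^r u/\partial \nu^r = 0$ for $r = 0,\dots,\alpha-1$ on $\partial B_1$. In a self-contained treatment this would require a nontrivial computation (differentiating under the integral sign and exploiting the algebraic identity above to see that the boundary conditions hold because $R(x,y)=1$ when $x$ or $y$ lies on $\partial B_1$). In practice I would simply cite Boggio's classical result, as the excerpt does, and focus on the sign verification, which is the only part that is really needed here.
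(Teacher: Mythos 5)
The paper does not reproduce a proof of this statement; it simply cites Proposition 3.6 of Gazzola--Grunau--Sweers, and that reference establishes positivity exactly as you do, via Boggio's closed-form Green's function on the ball together with the elementary algebraic identity $|x-y|^{*\,2}-|x-y|^2=(1-|x|^2)(1-|y|^2)$, and via the positive fundamental solution $c_{N,\alpha}|x-y|^{2\alpha-N}$ (positive since $N>2\alpha$) on $\mathbb{R}^N$. Your sketch is correct and takes the same route as the cited source, including the correct observation that the only genuinely nontrivial step is the verification that Boggio's kernel solves the Dirichlet problem, which you sensibly propose to cite rather than rederive.
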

\begin{remark}
\autoref{boggio} will be crucial in the sequel and that is the main reason why we restrict ourself to the case $\Omega=B_1$. Moreover, due to \autoref{boggio} one has that any nontrivial solution to \eqref{LaneEmden_limitato} is positive on $B_1$.
\end{remark}
\begin{theorem}[Lemma 7.9 in \cite{GazzolaGrunauSweers10}]\label{hopf}
If $f \in C^0$, $f \ge 0$, $x_0\in \partial B_1$ and $\mu \in \mathbb{R}^N$ such that $\mu \cdot x_0 <0$, then $\frac{\partial^{\alpha}u}{\partial \mu^{\alpha}} (x_0) >0$, where $u=K_{\alpha} (f)$.
\end{theorem}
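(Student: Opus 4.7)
The plan is to combine Boggio's explicit formula for the Green function of $(-\Delta)^{\alpha}$ on $B_1$ with Dirichlet conditions,
\[ G_{\alpha}(x,y) = k_{N,\alpha}\, \abs{x-y}^{2\alpha-N} \int_1^{[x,y]} \frac{(v^2-1)^{\alpha-1}}{v^{N-1}}\, dv, \qquad [x,y]^2 - 1 = \frac{(1-\abs{x}^2)(1-\abs{y}^2)}{\abs{x-y}^2}, \]
with the positivity preserving property of \autoref{boggio} and a careful analysis of the vanishing of $G_{\alpha}(\cdot,y)$ on $\partial B_1$. First I would reduce to smooth compactly supported densities: assuming $f \not\equiv 0$ (otherwise the conclusion is vacuous), one can pick a ball $B_r(y_0) \subset B_1$ on which $f \ge \varepsilon > 0$ and then $g \in C_c^{\infty}(B_r(y_0))$ with $0 \le g \le f$, $g \not\equiv 0$. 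Applying \autoref{boggio} to $f - g \ge 0$ yields $u - K_{\alpha}(g) \ge 0$ on $B_1$; since both functions lie in $C^{2\alpha}(\overline{B_1})$ with all derivatives up to order $\alpha-1$ vanishing on $\partial B_1$, a Taylor expansion of $u - K_\alpha(g)$ along the inward half-line $\{x_0 + t\mu : t>0\}$ (which enters $B_1$ because $\mu \cdot x_0 < 0$) gives $\partial_{\mu}^{\alpha} u(x_0) \ge \partial_{\mu}^{\alpha} K_{\alpha}(g)(x_0)$. It therefore suffices to treat $f = g \in C_c^{\infty}(B_1)$ nonnegative and nontrivial.

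Next I would factor the Green function using the substitution $v^2 = 1 + \sigma(x,y)\, w$ with $\sigma(x,y) := (1-\abs{x}^2)(1-\abs{y}^2)/\abs{x-y}^2$, which transforms Boggio's formula into
\[ G_{\alpha}(x,y) = (1-\abs{x}^2)^{\alpha}(1-\abs{y}^2)^{\alpha}\, \Phi(x,y), \qquad \Phi(x,y) = \frac{k_{N,\alpha}}{2\abs{x-y}^N} \int_0^1 \frac{w^{\alpha-1}}{(1+\sigma(x,y)\, w)^{N/2}}\, dw, \]
where $\Phi$ is smooth and strictly positive on $(\overline{B_1} \times \overline{B_1}) \setminus \{x=y\}$. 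Since $\mathrm{supp}\, g$ is compactly contained in $B_1$, the quantity $\abs{x-y}$ stays bounded below for $x$ near $x_0$ and $y \in \mathrm{supp}\, g$, which legitimizes differentiation under the integral sign. All derivatives of $(1-\abs{x}^2)^{\alpha}$ of order less than $\alpha$ vanish at $x_0 \in \partial B_1$, so Leibniz collapses the $\alpha$-th directional derivative to a single nonzero term,
\[ \partial_{\mu}^{\alpha} G_{\alpha}(x_0, y) = \alpha!\, (-2\, x_0 \cdot \mu)^{\alpha}\, (1-\abs{y}^2)^{\alpha}\, \Phi(x_0, y). \]
Because $x_0 \cdot \mu < 0$, the scalar $(-2\, x_0 \cdot \mu)^{\alpha}$ is strictly positive, so the integrand against $g$ is nonnegative and strictly positive on the open set where $g>0$, yielding $\partial_{\mu}^{\alpha} K_{\alpha}(g)(x_0) > 0$ and hence $\partial_{\mu}^{\alpha} u(x_0) > 0$.

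The main obstacle I anticipate is verifying the properties of $\Phi$: namely, that the remaining integral $\int_0^1 w^{\alpha-1}(1+\sigma w)^{-N/2}\, dw$ depends smoothly on $(x,y)$ and is uniformly bounded away from $0$ on compact subsets of $(\overline{B_1}\times \overline{B_1})\setminus\{x=y\}$, including the regime $\sigma \to \infty$ corresponding to $\abs{x-y} \to 0$ with $x,y$ interior. Here the hypothesis $2\alpha < N$ ensures convergence of $\int_0^{\infty} w^{\alpha-1}(1+w)^{-N/2}\, dw$, and together with elementary estimates this yields the required smoothness and positivity of $\Phi$. Once the factorization is in place, the Leibniz computation and the approximation argument are routine.
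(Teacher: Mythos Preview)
The paper does not give its own proof of this statement: it is quoted verbatim as Lemma~7.9 from \cite{GazzolaGrunauSweers10} and used as a black box in the proof of \autoref{GGS3}. So there is nothing in the paper to compare your argument against directly.

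That said, your proposal is correct and is essentially the standard argument (the one in \cite{GazzolaGrunauSweers10}): Boggio's formula, the factorization $G_\alpha(x,y)=(1-\abs{x}^2)^\alpha(1-\abs{y}^2)^\alpha\Phi(x,y)$ via the substitution $v^2=1+\sigma w$, and the observation that all $x$-derivatives of order $<\alpha$ of $(1-\abs{x}^2)^\alpha$ vanish on $\partial B_1$ so that Leibniz leaves only the single term $\alpha!(-2x_0\cdot\mu)^\alpha(1-\abs{y}^2)^\alpha\Phi(x_0,y)>0$. The reduction to $g\in C_c^\infty(B_1)$ via \autoref{boggio} and the Taylor expansion along the inward ray is clean.

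Two minor remarks. First, the case $f\equiv 0$ is not ``vacuous'': the conclusion $\partial_\mu^\alpha u(x_0)>0$ is actually false then, so the hypothesis $f\not\equiv 0$ must be assumed (this is implicit in the paper's use of the result and in the original source). Second, for $f\in C^0$ one only gets $u\in C^{2\alpha-1,\gamma}(\overline{B_1})$, not $C^{2\alpha}$; but this is harmless since your Taylor argument only needs $u-K_\alpha(g)\in C^\alpha$ up to the boundary, which holds. Finally, the ``obstacle'' you flag about $\sigma\to\infty$ never actually arises here: for $x$ near $x_0\in\partial B_1$ and $y\in\mathrm{supp}\,g\Subset B_1$, the distance $\abs{x-y}$ is bounded below and $\sigma$ stays bounded (indeed $\sigma(x_0,y)=0$), so smoothness and positivity of $\Phi$ are immediate in the relevant region.
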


\noindent In what follows, we recall some non existence results to  
\begin{equation}\label{LaneEmden_mn}
\begin{cases}
\begin{aligned}
(-\Delta)^{\alpha} u = \abs{v}^{q} \\
(-\Delta)^{\beta} v = \abs{u}^{p}
\end{aligned} & \mathbb{R^N}.
\end{cases}
\end{equation}
 
\noindent The next Theorem is established in \cite{MitidieriPohozaev01}; since the proof is given for general higher order operators, we give for completeness a detailed proof in Appendix A in the case of the polyharmonic operators. 
\begin{theorem}\label{GZL}
Let us consider the following
\begin{equation}\label{LaneEmden_mn}
\begin{cases}
\begin{aligned}
(-\Delta)^{\alpha} u \ge \abs{v}^{q} \\
(-\Delta)^{\beta} v \ge \abs{u}^{p}
\end{aligned} & \mathbb{R^N}
\end{cases}
\end{equation}
with $N> 2 \alpha, 2 \beta$.
If $p,q >1$ and $2\beta q + N + 2\alpha pq - N pq \ge 0$ or $2\alpha p + N + 2\beta pq - N pq \ge 0$, then there exist no weak solutions to \eqref{LaneEmden_mn}.
\end{theorem}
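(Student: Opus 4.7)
The proof will follow the Mitidieri–Pohozaev test function method. The plan is to multiply each inequality by a suitable rescaled cutoff, integrate by parts (which is harmless here because the cutoff is compactly supported in $\mathbb{R}^N$), apply Hölder to separate the nonlinearity from the derivatives of the cutoff, chain the two resulting estimates into a single inequality for $\int |u|^p \varphi_R$, and finally let $R\to \infty$.

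More concretely, fix $\varphi \in C^\infty_c(\mathbb{R}^N)$ with $\varphi\equiv 1$ on $B_1$, $\operatorname{supp}\varphi \subset B_2$, $0 \le \varphi \le 1$, and consider $\varphi_R(x) := \varphi(x/R)^m$ for $m$ large enough so that $\varphi_R^{-1/(p-1)}|(-\Delta)^\alpha \varphi_R|^{p/(p-1)}$ and the analogous expression with $\alpha,p$ replaced by $\beta,q$ remain integrable. Testing the two inequalities against $\varphi_R$ and integrating by parts $2\alpha$ and $2\beta$ times respectively gives
\[
\int |v|^q \varphi_R \le \int u \,(-\Delta)^\alpha \varphi_R, \qquad \int |u|^p \varphi_R \le \int v \,(-\Delta)^\beta \varphi_R.
\]
Applying Hölder with exponents $p,p/(p-1)$ (resp.\ $q,q/(q-1)$) and exploiting the scaling $|(-\Delta)^\alpha \varphi_R| \le C R^{-2\alpha}\,\varphi(\cdot/R)^{m-2\alpha}$ together with $|\operatorname{supp}\varphi_R| \le C R^N$, I get
\[
\int |v|^q \varphi_R \le C R^{-2\alpha + N(p-1)/p}\Bigl(\int |u|^p \varphi_R\Bigr)^{1/p}, \quad \int |u|^p \varphi_R \le C R^{-2\beta + N(q-1)/q}\Bigl(\int |v|^q \varphi_R\Bigr)^{1/q}.
\]

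Plugging the second estimate into the first (or vice versa) yields
\[
\Bigl(\int |u|^p \varphi_R\Bigr)^{1-1/(pq)} \le C R^{\theta/(pq)},\qquad \theta := -2\alpha p - 2\beta p q + N(pq-1),
\]
and the symmetric chain produces the analogous bound with exponent $\theta' := -2\beta q - 2\alpha p q + N(pq-1)$. The condition $2\alpha p + N + 2\beta pq - Npq \ge 0$ is exactly $\theta \le 0$, and the second condition is $\theta' \le 0$. In the strict subcritical case ($\theta<0$ or $\theta'<0$), letting $R\to\infty$ forces $\int_{\mathbb{R}^N}|u|^p=0$, hence $u\equiv 0$, and then $(-\Delta)^\beta v \ge 0$, $v \ge 0$ combined with a Liouville-type consequence for polyharmonic superharmonic functions on $\mathbb{R}^N$ (or directly using $\int|v|^q=0$ from the first inequality) gives $v\equiv 0$.

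The main obstacle is the critical case $\theta = 0$ (resp.\ $\theta'=0$). There the argument above only gives $\int_{\mathbb{R}^N}|u|^p<\infty$, not vanishing. To close it, I would refine the Hölder step on an annulus: restrict the derivative integral to $A_R := B_{2R}\setminus B_R$ (where the cutoff is non-constant) to obtain
\[
\Bigl(\int |u|^p \varphi_R\Bigr)^{1-1/(pq)} \le C\Bigl(\int_{A_R} |u|^p\Bigr)^{1/(pq)} \cdot R^{\theta/(pq)},
\]
and in the critical regime the factor $R^{\theta/(pq)}$ is $O(1)$ while $\int_{A_R}|u|^p \to 0$ by absolute continuity of the integral, producing the contradiction. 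This localisation, together with keeping track that the integration-by-parts identities hold in the distributional sense for weak solutions satisfying $u\in L^p_{\mathrm{loc}}$, $v\in L^q_{\mathrm{loc}}$, is the delicate bookkeeping; once it is carried out the two exponent conditions fall out immediately from the scaling computation above.
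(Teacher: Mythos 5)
Your proposal follows exactly the Mitidieri--Pohozaev nonlinear-capacity (rescaled test function) method that the paper uses: testing both inequalities against $\varphi_R$, applying H\"older to separate the nonlinearities from the capacity-type integrals of $(-\Delta)^\alpha\varphi_R$ and $(-\Delta)^\beta\varphi_R$, chaining the two estimates to extract the scaling exponent in $R$, and handling the borderline case by localizing the H\"older step to the annulus $B_{2R}\setminus B_R$ where the cutoff is non-constant, so that $L^p$-summability of $u$ (obtained from the $O(1)$ bound) forces the annular contribution to vanish. This is the same route as the paper's Appendix~A proof (which spells out the pointwise estimate for $\Delta^s(\psi^\gamma(\rho/R))$ in \autoref{lemma} and \autoref{rmkh}), so no further comparison is needed.
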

\begin{remark}
In \cite{LiuGuoZhang06} the same result is proved for classical solutions, whereas in \cite{CaristiDAmbrosioMitidieri08} an alternative proof is given by exploiting suitable representation formulas for \eqref{LaneEmden_mn}. 
\end{remark}
\noindent Let us now state without proof the next results. 
\begin{theorem}[see Theorem 1.2' in \cite{LiuGuoZhang06}]\label{GLZ2}
If $1 < p, q< \min \{ \frac{N+2\alpha}{N-2\beta}, \frac{N+2\beta}{N-2\alpha} \}$ then the only classical solution to 
\begin{equation*}
\begin{cases}
\begin{aligned}
(-\Delta)^{\alpha} u = \abs{v}^{q} \\
(-\Delta)^{\beta} v = \abs{u}^{p}
\end{aligned} & \mathbb{R^N}
\end{cases}
\end{equation*}
is the trivial one.
\end{theorem}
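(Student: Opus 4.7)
The plan is to reduce the system on $\mathbb{R}^N$ to a coupled integral system, apply the moving-planes method in integral form to force radial symmetry about a common point, and then derive a contradiction via a Pohozaev-type identity in the prescribed subcritical range. The main obstacle will be adapting the moving-planes step to a system with two kernels of different orders $2\alpha-N$ and $2\beta-N$, where the coupling through $(p,q)$ must be balanced at every stage of the comparison estimates.

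\textbf{Step 1 (integral representation).} Since $(-\Delta)^\alpha u=|v|^q\ge 0$ and $(-\Delta)^\beta v=|u|^p\ge 0$ on $\mathbb{R}^N$ with $N>2\alpha,2\beta$, a superharmonic--iteration argument for polyharmonic functions yields $u,v\ge 0$, and the representations
\[
u(x)=c_\alpha\int_{\mathbb{R}^N}\frac{v(y)^{q}}{|x-y|^{N-2\alpha}}\,dy,\qquad v(x)=c_\beta\int_{\mathbb{R}^N}\frac{u(y)^{p}}{|x-y|^{N-2\beta}}\,dy.
\]
The absence of extra polynomial terms is obtained from a bootstrap that uses the subcriticality $p,q<\min\{(N+2\alpha)/(N-2\beta),(N+2\beta)/(N-2\alpha)\}$ to produce decay at infinity for both $u$ and $v$, thereby excluding any harmonic polynomial correction.

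\textbf{Step 2 (moving planes in integral form).} With the integral system in hand, I would apply the Chen--Li--Ou integral moving-planes method. For $\lambda\in\mathbb{R}$, set $\Sigma_\lambda=\{x_1<\lambda\}$, $u_\lambda(x)=u(2\lambda-x_1,x_2,\dots,x_N)$, and analogously for $v_\lambda$. A direct computation on the reflected kernels, combined with Hardy--Littlewood--Sobolev estimates and the subcritical bound on $(p,q)$, shows that the ``bad sets'' $\{u<u_\lambda\}\cap\Sigma_\lambda$ and $\{v<v_\lambda\}\cap\Sigma_\lambda$ have arbitrarily small measure for $\lambda$ sufficiently negative, hence are empty by the HLS inequality. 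Sliding $\lambda$ up to its supremum and running the standard alternative yields that $u$ and $v$ are radially symmetric about a common point and strictly decreasing in the radial variable. This is the technical heart of the argument, and the only place where the mixed orders of the two kernels force a nontrivial adaptation of the single-equation proof.

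\textbf{Step 3 (Pohozaev identity).} Radial symmetry reduces the problem to an ODE system with positive, strictly decreasing solutions tending to $0$ at infinity. Testing the first equation against $x\cdot\nabla v+\mu v$ and the second against $x\cdot\nabla u+\nu u$ with multipliers tuned to the natural scaling exponents $\sigma_u=(2\alpha+2\beta q)/(pq-1)$ and $\sigma_v=(2\beta+2\alpha p)/(pq-1)$, integrating over $B_R$, and letting $R\to\infty$ (the decay from Step 1 killing all boundary contributions), one obtains an identity of the form
\[
C(N,\alpha,\beta,p,q)\int_{\mathbb{R}^N}u^{p+1}\,dx=0,
\]
whose coefficient $C$ is nonzero precisely in the assumed subcritical range. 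This forces $u\equiv 0$, and the second integral equation then gives $v\equiv 0$, completing the proof.
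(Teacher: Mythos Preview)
First, note that the paper does not prove this statement at all: it is quoted from \cite{LiuGuoZhang06} and introduced with ``let us now state without proof the next results,'' so there is no argument in the paper to compare your attempt against.

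On its own merits, your outline has a genuine gap in Step~3. The Pohozaev computation you describe closes only when $\alpha=\beta$. Testing $(-\Delta)^{\alpha}u=v^{q}$ against $x\cdot\nabla v$ and integrating by parts produces terms involving $(-\Delta)^{\alpha}v$, while the second equation controls only $(-\Delta)^{\beta}v$; when $\alpha\ne\beta$ the cross terms simply do not combine into a closed bilinear form. For the same reason the auxiliary identity $\int v^{q+1}=\int u^{p+1}$, which you implicitly need in order to collapse everything to a single expression ``$C\int u^{p+1}=0$,'' is unavailable: cross-testing gives $\int v(-\Delta)^{\alpha}u=\int u(-\Delta)^{\alpha}v$, not $\int u(-\Delta)^{\beta}v$. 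Even formally, a Pohozaev identity would detect the hyperbola $\tfrac{1}{p+1}+\tfrac{1}{q+1}=1-\tfrac{\alpha+\beta}{N}$, not the box $p,q<\min\{\tfrac{N+2\alpha}{N-2\beta},\tfrac{N+2\beta}{N-2\alpha}\}$ appearing in the statement; that box is precisely the condition under which the Kelvin transforms $|x|^{-(N-2\alpha)}u(x/|x|^{2})$ and $|x|^{-(N-2\beta)}v(x/|x|^{2})$ yield a favourable sign in the comparison, and this moving-spheres/Kelvin route is what \cite{LiuGuoZhang06} actually use. A smaller issue: Step~1 is circular as written, since you invoke decay ``from subcriticality'' to exclude polynomial corrections to the Riesz representation, but the decay is normally a \emph{consequence} of that representation; the standard fix is an iterated-superharmonicity argument showing $(-\Delta)^{j}u\ge0$ for $0\le j<\alpha$ directly from the differential system before ever passing to integrals.
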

\begin{theorem}[see Theorem 1.1 in \cite{LiuGuoZhang06} and Theorem 5.1 in \cite{CaristiDAmbrosioMitidieri08}] \label{nethradial}
Let us assume that 
\[ \frac{1}{p+1} + \frac{1}{q+1} > \frac{N-2\alpha}{N} \]
and $p, q \ge 1$ and not both equal to $1$. Then the only radially symmetric, classical solution to 
\begin{equation*}
\begin{cases}
\begin{aligned}
(-\Delta)^{\alpha} u = \abs{v}^q \\
(-\Delta)^{\alpha} v = \abs{u}^p 
\end{aligned} \text{ in $\mathbb{R}^N$} 
\end{cases}
\end{equation*}
is the trivial one. 
\end{theorem}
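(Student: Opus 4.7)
My plan is to combine a dual energy identity with a Pohozaev--Mitidieri integral identity on a ball $B_R$ and then let $R\to\infty$ along a well-chosen sequence. Because $\alpha=\beta$ the polyharmonic operator is the same on both lines and therefore self-adjoint, which is what makes this bookkeeping feasible.

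First I would reduce to the strictly positive case: applying \autoref{boggio} on exhausting balls and representing $u,v$ via the polyharmonic Dirichlet Green function shows that any classical radial solution satisfies $u,v\ge 0$ on $\mathbb{R}^N$, and the equations then force either $(u,v)\equiv 0$ (in which case there is nothing to prove) or $u,v>0$ everywhere, so absolute values may be dropped. Testing the first equation against $v$ and the second against $u$ on $B_R$ and using the symmetry of $(-\Delta)^\alpha$ modulo boundary contributions yields the dual energy identity
\[
\int_{B_R} v^{q+1} - \int_{B_R} u^{p+1} = \mathcal{B}^{\mathrm{en}}_R.
\]
Testing instead against $x\cdot\nabla v$ and $x\cdot\nabla u$, integrating by parts $\alpha$ times on each polyharmonic, using $v^q(x\cdot\nabla v)=x\cdot\nabla(v^{q+1}/(q+1))$ (and analogously for $u$) with one further integration by parts on the right-hand side, and substituting the dual energy identity to consolidate $\int v^{q+1}$ and $\int u^{p+1}$, one arrives at the Pohozaev--Mitidieri identity
\[
\left(\frac{N}{p+1}+\frac{N}{q+1}-(N-2\alpha)\right)\int_{B_R} u^{p+1} = \mathcal{B}^{\mathrm{Poh}}_R.
\]

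Under the standing hypothesis $\frac{1}{p+1}+\frac{1}{q+1}>\frac{N-2\alpha}{N}$ the coefficient on the left is strictly positive, and the integrand is nonnegative. So if I can exhibit a sequence $R_n\to\infty$ along which $\mathcal{B}^{\mathrm{en}}_{R_n}$ and $\mathcal{B}^{\mathrm{Poh}}_{R_n}$ both vanish, then in the limit $\int_{\mathbb{R}^N} u^{p+1}=0$, hence $u\equiv v\equiv 0$.

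The hard part will be precisely this asymptotic analysis of the boundary terms: they involve radial derivatives of $u$ and $v$ of orders up to $2\alpha-1$, each weighted by a power of $R$. I would exploit the radial ODE form of $(-\Delta)^\alpha$ together with the strict positivity of $u,v$ to obtain sign-definite behavior of successive radial derivatives on $(0,\infty)$ (via repeated integration from infinity in the radial polyharmonic representation), which yields integrability in $r$ of each of the relevant boundary quantities, and then a Fubini-type argument selects $R_n\to\infty$ along which every boundary integrand decays fast enough for the Pohozaev and energy identities to pass to the limit. The exclusion of $p=q=1$ is natural here: it is precisely the degenerate linear regime $(-\Delta)^{2\alpha}u=u$, where the Pohozaev scheme cannot by itself rule out oscillatory radial solutions.
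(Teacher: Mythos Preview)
The paper does not prove this theorem: it is quoted from the literature without proof (note the sentence ``Let us now state without proof the next results'' immediately preceding \autoref{GLZ2} and \autoref{nethradial}). So there is no in-paper argument to compare against; the relevant proofs are in \cite{LiuGuoZhang06} and \cite{CaristiDAmbrosioMitidieri08}. Your Pohozaev--Mitidieri scheme is indeed the strategy used in those references for the radial case, so in spirit you are on the same track as the cited proofs rather than proposing something new.

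That said, your reduction to positivity has a genuine gap. You cannot ``apply \autoref{boggio} on exhausting balls'' to a classical solution on $\mathbb{R}^N$: Boggio's result concerns the Dirichlet problem $(-\Delta)^\alpha w = f$ in $B_R$ with $w=\partial_\nu w=\dots=0$ on $\partial B_R$, and your entire solution $u$ does not satisfy those boundary conditions on any ball, so the Green-function representation you invoke is not available. The correct route in the radial setting is the ODE one you allude to later: writing $(-\Delta)^\alpha$ in radial coordinates and integrating repeatedly from $0$ (using regularity at the origin) to show that each intermediate quantity $(-\Delta)^j u$, $(-\Delta)^j v$ for $0\le j\le \alpha$ is nonnegative; this is how \cite{LiuGuoZhang06} proceeds, and it simultaneously delivers the monotonicity and sign information you need for the boundary terms.

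Beyond that, be aware that the sentence ``a Fubini-type argument selects $R_n\to\infty$ along which every boundary integrand decays fast enough'' is hiding essentially the entire proof: the boundary expression $\mathcal{B}^{\mathrm{Poh}}_R$ contains $2\alpha$ distinct surface integrals with mixed signs, and controlling them requires precise a priori decay estimates on $u,v$ and their radial derivatives (this is where the subcriticality hypothesis really enters). Your plan is correct in outline, but what you have written is a table of contents for the proof rather than the proof itself.
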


\section{A continuum of solutions}
This Section is devoted to prove 
\begin{proposition}\label{sequenzenonlimitate}
Let $p, q$ such that $pq>1$. 
Denote with $\mathcal{C}$ the component in $\mathbb{R}^+ \times C_0^{2\alpha}(\bar{B}_1) \times C_0^{2\beta}(\bar{B}_1)$ of solutions $(t, u, v)$ to 
\begin{equation}\label{eqnt}
\begin{cases}
\begin{aligned}
(-\Delta)^{\alpha} u &= (t+ \abs{v})^{q} \\
(-\Delta)^{\beta} v &= (t^{\vartheta}+\abs{u})^{p}
\end{aligned} \,  \text{in } B_1\\
\frac{\partial^{r} u}{\partial \nu^{r}}=0, \, r=0, \dots, \alpha-1,  \text{ on } \partial B_1 \\
\frac{\partial^{r} v}{\partial \nu^{r}}=0, \, r=0, \dots, \beta-1, \text{ on } \partial B_1
\end{cases}
\end{equation}
containing $(0,0,0)$, where $\vartheta \in (1/p, q)$. If 
\[ \mathcal{C} \cap (\{0\}\times C_0^{2\alpha}(\bar{B}_1) \times C_0^{2\beta}(\bar{B}_1))=\{(0,0,0)\} \]
then $\mathcal{C}$ is unbounded in $\mathbb{R}^+ \times C_0^{2\alpha}(\bar{B}_1) \times C_0^{2\beta}(\bar{B}_1)$. 
\end{proposition}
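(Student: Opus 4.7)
The plan is to recast \eqref{eqnt} as a fixed-point equation for a one-parameter family of compact operators, and then to invoke a global continuation theorem of Rabinowitz-Leray-Schauder type. Concretely, setting $X:=C_0^{2\alpha}(\bar{B}_1)\times C_0^{2\beta}(\bar{B}_1)$, I would define
\[ T\colon\mathbb{R}^+\times X\to X,\qquad T(t,u,v):=\bigl(K_{\alpha}((t+\abs{v})^q),\,K_{\beta}((t^{\vartheta}+\abs{u})^p)\bigr), \]
where $K_\alpha,K_\beta$ are the solution operators of \autoref{lemKalpha}. Then $(u,v)$ solves \eqref{eqnt} if and only if $(u,v)=T(t,u,v)$; joint continuity of $T$ in $(t,u,v)$ is clear, and compactness in $(u,v)$ follows from the compactness asserted in \autoref{lemKalpha} composed with the continuous superposition operators $(u,v)\mapsto((t+\abs{v})^q,(t^\vartheta+\abs{u})^p)$.

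The crux is the computation of the Leray-Schauder index $i(T(0,\cdot),(0,0))$. By the standing hypothesis, $(0,0)$ is the only fixed point of $T(0,\cdot)$ and hence it is isolated. Using the admissible homotopy $H(s,u,v):=sT(0,u,v)$ for $s\in[0,1]$, any nontrivial fixed point of $H(s,\cdot)$ would, via the Schauder-type estimates from \autoref{lemKalpha}, satisfy
\[ \norm{u}_X \le C\norm{v}_{\infty}^q,\qquad \norm{v}_X \le C\norm{u}_{\infty}^p, \]
so that $\norm{u}_X\le C'\norm{u}_X^{pq}$. Since $pq>1$, this gives a uniform lower bound $\norm{u}_X\ge c_0>0$, so on a sufficiently small ball $B_r$ no nontrivial fixed points of $H(s,\cdot)$ exist. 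Homotopy invariance then delivers $\deg(I-T(0,\cdot),B_r,0)=\deg(I,B_r,0)=1$, whence $i(T(0,\cdot),(0,0))=1$.

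The conclusion follows from the global continuation theorem: for each $R>0$, one restricts $T$ to $[0,R]\times X$ and considers the solution set of $(u,v)=T(t,u,v)$. Since $T(0,\cdot)$ has $(0,0)$ as its unique fixed point with nonzero index, the Rabinowitz-type alternative guarantees that the component $\mathcal{C}_R$ of this solution set through $(0,0,0)$ is either unbounded in $[0,R]\times X$ or meets a second fixed point of $T(0,\cdot)$. The hypothesis $\mathcal{C}\cap(\{0\}\times X)=\{(0,0,0)\}$ rules out the latter, so $\mathcal{C}_R$ must be unbounded for every $R$. Letting $R\to\infty$ transfers this unboundedness to the full component $\mathcal{C}$ in $\mathbb{R}^+\times X$.

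The main difficulty I anticipate lies in the careful setup of the degree and the Rabinowitz alternative in this noncompact parameter range: one has to verify the isolatedness of $(0,0,0)$ (given), the nonvanishing of its index (done via the superlinearity $pq>1$), and handle the noncompactness of $\mathbb{R}^+$ by the exhaustion scheme above. Once these points are in place, the proposition follows from standard degree-theoretic machinery.
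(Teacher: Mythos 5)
Your proposal is essentially the same as the paper's proof: the paper defines the same compact map $G(t,u,v)=(K_\alpha((t+|v|)^q),K_\beta((t^\vartheta+|u|)^p))$, computes $\deg(I-G(0,\cdot,\cdot),B_R,0)=1$ via the same linear homotopy $h(\lambda,u,v)=(K_\alpha(\lambda|v|^q),K_\beta(\lambda|u|^p))$ (which by linearity of $K_\alpha,K_\beta$ equals your $sT(0,\cdot)$), and then invokes a Rabinowitz-type global continuation lemma on $\mathbb{R}^+\times X$ (Lemma~\ref{lemgrado}, from Azizieh--Cl\'ement) rather than your exhaustion over $[0,R]$, which serves the same purpose. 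One small imprecision: the lower bound for nontrivial fixed points is not quite a consequence of ``Schauder-type estimates'' as you phrase it, since elliptic regularity only controls $\norm{u}_{C^{2\alpha-1,\gamma}}$ (not $\norm{u}_{C^{2\alpha}}$) by $\norm{v}_\infty^q$; the paper instead uses the Boggio comparison principle (Theorem~\ref{boggio}) to get $\norm{u}_\infty\le C\norm{v}_\infty^q$ directly, from which $\norm{u}_X\ge\norm{u}_\infty\ge c_0$ follows and the argument is clean.
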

\noindent The proof of \autoref{sequenzenonlimitate} needs a few preliminary results.
\begin{lemma}[see Lemma A.2 in {\cite{AziziehClement02}}]\label{lemgrado}
Let $(E, \norm{\cdot})$ be a real Banach space. Let $G \colon \mathbb{R}^+ \times E \to E$ be continuous and compact. Suppose, moreover, $G$ satisfies 
\begin{itemize}
\item[(a)] $G(0,0)=0$
\item[(b)] there exists $R>0$ such that 
\begin{itemize}
\item[(i)] $u\in E$, $\norm{u} \le R$ and $u=G(0,u)$ implies $u=0$
\item[(ii)] $deg(Id-G(0, \cdot), B_R, 0)=1$.
\end{itemize}
\end{itemize}
Let $J$ denote the set of solutions to the problem $u=G(t,u)$ in $\mathbb{R}^+ \times E$. Let $\mathcal{C}$ denote the component of $J$ containing $(0,0)$. If 
\[ \mathcal{C} \cap (\{0\} \times E) = \{(0, 0)\} \]
then $\mathcal{C}$ is unbounded in $\mathbb{R}^+ \times E$. 
\end{lemma}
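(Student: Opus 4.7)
The plan is the classical Rabinowitz-type global continuation argument via the Leray--Schauder degree and its generalized homotopy invariance on $\mathbb{R}^+ \times E$. Suppose, for contradiction, that $\mathcal{C}$ is bounded; since $G$ is compact, the fixed-point set $J$ is closed and its bounded closed subsets are compact, so $\mathcal{C}$ itself is compact. The goal is to enclose $\mathcal{C}$ in a bounded open neighborhood $\mathcal{O}\subset \mathbb{R}^+\times E$ that is free of solutions on its boundary, and then to compare the degrees on the slices $\mathcal{O}_t:=\{u\in E:(t,u)\in \mathcal{O}\}$ at $t=0$ and at large $t$.

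The main technical step is the construction of $\mathcal{O}$. First choose a bounded open neighborhood $V$ of $\mathcal{C}$ in $\mathbb{R}^+\times E$ and set $X:=J\cap\overline{V}$; by compactness of $G$, the set $X$ is compact. One checks that $\mathcal{C}$ is a connected component of $X$: any connected subset of $X$ containing $(0,0)$ is, in particular, a connected subset of $J$ containing $(0,0)$, hence contained in $\mathcal{C}$. In a compact metric space the component of a point coincides with the intersection of its clopen neighborhoods; thus for any open neighborhood $W$ of $\mathcal{C}$ inside $V$ there is a clopen subset $U$ of $X$ with $\mathcal{C}\subset U\subset W$. Because $U$ and $X\setminus U$ are then disjoint compacts in $\mathbb{R}^+\times E$, one can take $\mathcal{O}$ to be a sufficiently thin tubular neighborhood of $U$ inside $V$, so that $\mathcal{O}\cap X=U$ and $\partial\mathcal{O}\cap J=\emptyset$. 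By choosing $W$ small enough at the outset, using the hypothesis $\mathcal{C}\cap(\{0\}\times E)=\{(0,0)\}$ together with the compactness of the set of fixed points of $G(0,\cdot)$ lying in $\overline{V}$, one further arranges that $\mathcal{O}_0$ contains no fixed point of $G(0,\cdot)$ other than $0$.

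Now fix $T>0$ so large that $\mathcal{O}\subset[0,T)\times E$; then $\mathcal{O}_T=\emptyset$. Since $G$ is a compact homotopy with no fixed point on $\partial\mathcal{O}$, the generalized homotopy invariance of the Leray--Schauder degree yields that
\[ d(t):=\deg\bigl(Id-G(t,\cdot),\,\mathcal{O}_t,\,0\bigr) \]
is independent of $t\in[0,T]$. On the one hand $d(T)=0$ because $\mathcal{O}_T=\emptyset$. On the other hand, at $t=0$ the construction guarantees that the only fixed point of $G(0,\cdot)$ in $\mathcal{O}_0\cup B_R$ is $0$ itself, so two applications of the excision property together with hypothesis (b)(ii) give
\[ d(0) = \deg\bigl(Id-G(0,\cdot),\,\mathcal{O}_0,\,0\bigr) = \deg\bigl(Id-G(0,\cdot),\,B_R,\,0\bigr) = 1. \]
The identities $0=d(T)=d(0)=1$ are incompatible, so $\mathcal{C}$ cannot be bounded.

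The most delicate point is the construction of the isolating neighborhood $\mathcal{O}$: one must simultaneously separate $\mathcal{C}$ from the rest of $J$ (so that $\partial\mathcal{O}\cap J=\emptyset$, as required by the homotopy invariance) and isolate $0$ from any other fixed point of $G(0,\cdot)$ on the slice $\{t=0\}$ (so that the degree on $\mathcal{O}_0$ collapses, via excision, to the one computed in hypothesis (b)(ii)). The clopen-neighborhood characterisation of connected components in compact metric spaces is the right abstract tool to carry out both tasks at once.
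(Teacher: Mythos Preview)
The paper does not actually prove this lemma: it is stated with a reference to \cite{AziziehClement02} (Lemma~A.2) and then used as a black box in the proof of Proposition~\ref{sequenzenonlimitate}. So there is no ``paper's own proof'' to compare against.

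Your argument is correct and is precisely the standard Rabinowitz global-bifurcation proof that underlies such continuation lemmas. The main ingredients---closedness and local compactness of $J$ from the compactness of $G$; the Whyburn-type separation of $\mathcal{C}$ from the remainder of $J\cap\overline{V}$ via clopen sets in a compact metric space; the generalized homotopy invariance of the Leray--Schauder degree on the variable slices $\mathcal{O}_t$; and the excision step linking $\deg(Id-G(0,\cdot),\mathcal{O}_0,0)$ to the hypothesis (b)(ii)---are all in place. The one point worth stating a bit more explicitly is the choice of $\mathcal{O}$ so that $\overline{\mathcal{O}}\subset V$: this is what guarantees that any solution on $\partial\mathcal{O}$ would lie in $X=J\cap\overline{V}$, which is then ruled out by the positive distance between the clopen pieces $U$ and $X\setminus U$. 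With that detail made explicit, the proof is complete and would serve as a self-contained substitute for the cited reference.
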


\begin{lemma}\label{lemstimedalbasso}
Let $p, q$ such that $pq>1$. Then there exists a real number $R>0$ such that if $(\lambda, u ,v ) \in [0,1] \times C_0^{2\alpha}(\bar{B}_1) \times C_0^{2\beta}(\bar{B}_1)$ is a solution to
\begin{equation}\label{eqnlemma}
\begin{cases}
u = K_{\alpha} (\lambda \abs{v}^{q}) \\
v = K_{\beta} (\lambda \abs{u}^{p}) \\
u \ne 0 \text{ or } v\ne 0
\end{cases} 
\end{equation}
then $\norm{u}_{\infty} >R$ and $\norm{v}_{\infty} >R$.
\end{lemma}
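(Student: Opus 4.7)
\textbf{Proof plan for Lemma \ref{lemstimedalbasso}.}
The plan is to exploit the compactness/boundedness of the solution operators $K_\alpha, K_\beta$ on $L^\infty(B_1)$ and combine the two resulting estimates, using crucially the assumption $pq > 1$.

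First I would rule out the degenerate cases. Suppose $(\lambda, u, v)$ solves \eqref{eqnlemma} with $u \neq 0$ or $v \neq 0$. If $\lambda = 0$, then $u = K_\alpha(0) = 0$ and $v = K_\beta(0) = 0$, a contradiction; so $\lambda > 0$. Similarly, if $u \equiv 0$, then $v = K_\beta(\lambda\, |u|^p) = 0$, contradiction; and symmetrically for $v$. Hence $\lambda > 0$ and both $u, v$ are nontrivial, and in particular $\|u\|_\infty > 0$, $\|v\|_\infty > 0$.

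Next I would establish the key a priori bound
\[
\|K_\alpha(f)\|_\infty \leq C_\alpha \|f\|_\infty \quad \text{for all } f \in L^\infty(B_1),
\]
and analogously for $K_\beta$. This follows from the elliptic estimate \eqref{stimanorma} recalled in the proof of \autoref{lemKalpha}: since $f \in L^\infty(B_1) \subset L^p(B_1)$ for every $p$, picking $p$ large enough and using $W^{2\alpha, p}(B_1) \hookrightarrow L^\infty(B_1)$ gives the claim. Applying this to the two equations of \eqref{eqnlemma} and using $\lambda \leq 1$, I get
\[
\|u\|_\infty \leq C_\alpha \lambda \|v\|_\infty^{q} \leq C_\alpha \|v\|_\infty^{q}, \qquad
\|v\|_\infty \leq C_\beta \lambda \|u\|_\infty^{p} \leq C_\beta \|u\|_\infty^{p}.
\]

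Finally, chaining the two estimates yields
\[
\|u\|_\infty \leq C_\alpha C_\beta^{q} \|u\|_\infty^{pq},
\]
so that, because $\|u\|_\infty > 0$ and $pq - 1 > 0$,
\[
\|u\|_\infty \geq \bigl(C_\alpha C_\beta^{q}\bigr)^{-\frac{1}{pq - 1}} =: R_1 > 0.
\]
An entirely symmetric argument gives $\|v\|_\infty \geq R_2 > 0$, and taking $R = \min(R_1, R_2)/2$ concludes the proof. The argument is short and quantitative; the only minor subtlety worth noting is that all constants are independent of $\lambda \in [0,1]$, which is exactly where the bound $\lambda \leq 1$ is used to absorb $\lambda$ into the $C_\alpha, C_\beta$. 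There is no serious obstacle here — the statement is essentially a direct consequence of the superlinearity condition $pq > 1$ combined with the linear elliptic estimate.
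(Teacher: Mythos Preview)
Your proof is correct and follows essentially the same scheme as the paper: derive $\|u\|_\infty \le C_1 \|v\|_\infty^q$ and $\|v\|_\infty \le C_2 \|u\|_\infty^p$, then chain them using $pq>1$. The only difference is in how you obtain the linear $L^\infty$ bound on $K_\alpha$: the paper uses Boggio's comparison principle (\autoref{boggio}) to compare with $K_\alpha(1)$, whereas you use the elliptic estimate \eqref{stimanorma} plus a Sobolev embedding. Your route has the minor advantage of not requiring the positivity-preserving property (hence not being tied to the ball), while the paper's route gives the explicit constant $C_1 = \|K_\alpha(1)\|_\infty$; either way the argument is the same in substance.
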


\begin{proof}
By  \autoref{boggio} one has $K_{\alpha} \left(  \left ( \frac{\abs{v}}{\norm{v}_{\infty}} \right)^q \right) \le K_{\alpha} (1)$, hence
\[
\abs{u} = \abs{K_{\alpha} (\lambda \abs{v}^{q})} \le \abs{K_{\alpha} \left( \norm{v}^q_{\infty} \left ( \frac{\abs{v}}{\norm{v}_{\infty}} \right)^q \right)} 
\le \norm{v}^q_{\infty} \abs{K_{\alpha} (1)} \le C_1 \norm{v}^q_{\infty}
\]
thus
\[ \norm{u}_{\infty} \le C_1 \norm{v}^q_{\infty} \]
and similarly
\[ \norm{v}_{\infty} \le C_2 \norm{u}^p_{\infty}. \]
Then $\norm{u}_{\infty} \le C_1 C_2^q \norm{u}_{\infty}^{pq}$ and therefore $\norm{u}_{\infty} \ge R$; similarly for $v$.
\end{proof}

\begin{proof}[Proof of \autoref{sequenzenonlimitate}]
We apply \autoref{lemgrado}: let us define
\[ G \colon [0,+\infty)  \times C_0^{2\alpha}(\bar{B}_1) \times C_0^{2\beta}(\bar{B}_1) \to C_0^{2\alpha}(\bar{B}_1) \times C_0^{2\beta}(\bar{B}_1) \]
as follows
\[ G(t, u, v)=(K_{\alpha}(t+\abs{v})^q, K_{\beta}(t^{\vartheta}+\abs{u})^p). \]
The operator $G$ is continuous and compact since $K_{\alpha}, K_{\beta}$ have these properties by \autoref{lemKalpha}. Note that $v \in C^{2\beta}$ implies $(v+t)^q \in C^{2\beta}$ and thus by \autoref{lemKalpha} with $r=2\beta>1$ one has $K_{\alpha}(v+t)^q \in C^{2\alpha}$. 
Moreover $G(0,0,0)=(0,0)$.

\noindent Hypothesis $(b)(i)$ of \autoref{lemgrado} is satisfied by \autoref{lemstimedalbasso} with $\lambda=1$.

\noindent Let $B_{\alpha}(0,R)$ be the ball in $C_0^{2\alpha}(\bar B_1)$ of radius $R$ and let us define an homotopy $h \colon [0,1] \times \overline{B_{\alpha}(0,R)} \times \overline{B_{\beta}(0,R)} \to C_0^{2\alpha}(\bar{B}_1) \times C_0^{2\beta}(\bar{B}_1)$ by 
\[h(\lambda, u , v) \mapsto (K_{\alpha}(\lambda\abs{v}^q), K_{\beta}(\lambda \abs{u}^p)). \]
By \autoref{lemKalpha} the map $h$ is continuous and compact, $h(1, \cdot, \cdot)=G(0, \cdot, \cdot)$, $h(0, \cdot, \cdot)=(0,0)$ and by \autoref{lemstimedalbasso} one has $h(\lambda, u, v) \ne (u,v)$ for all $(u,v) \in \partial(B_{\alpha}(0,R) \times B_{\beta}(0,R))$. Therefore hypothesis $(b)(ii)$ of \autoref{lemgrado} is also satisfied. Indeed, one uses the homotopy invariance of the Leray-Schauder degree (see e.g.~Theorem 2.1 (iii) in \cite{Mawhin99}): 
\begin{multline*} deg(Id-G(0, \cdot, \cdot), B_R, 0)=deg(Id - h(1, \cdot, \cdot),  B_R, 0)\\= deg(Id - h(0, \cdot, \cdot),  B_R, 0)=deg(Id,  B_R, 0)= 1.  \qedhere \end{multline*}
\end{proof}

\section{Blow-up analysis}
The main result of this Section is the following 
\begin{proposition}\label{lemblowup}
Let $(t_n, u_n, v_n)$ be a sequence of solutions to \eqref{eqnt} in $\mathbb{R}^+ \times C_0^{2\alpha}(\bar{B}_1) \times C_0^{2\beta}(\bar{B}_1)$ with $pq >1$ and $\vartheta \in (1/p, q)$ fixed such that 
\begin{equation}\label{successione1} t_n + \norm{u_n}_{\infty} + \norm{v_n}_{\infty} \to \infty. \end{equation}
Suppose that there exist $\rho >0$ and $\{ x_n \}, \{ x_n' \} \in B_1$ satisfying $u_n(x_n)=\norm{u_n}_{\infty}$, $v_n(x_n')=\norm{v_n}_{\infty}$ and such that
\[ dist(x_n, \partial B_1) \ge \rho, \, dist(x_n', \partial B_1) \ge \rho . \]
Then there exists $(u,v) \in C^{2\alpha}(\mathbb{R}^N) \times C^{2\beta}(\mathbb{R}^N)$ nontrivial solution to
\begin{equation}\label{sysnonexistence}
\begin{cases}
\begin{aligned}
(-\Delta)^{\alpha} u = \abs{v}^q \\
(-\Delta)^{\beta} v = \abs{u}^p 
\end{aligned} \text{ in $\mathbb{R}^N$.} 
\end{cases}
\end{equation}
Moreover, if $u_n, v_n$ are radially symmetric and $x_n=x_n'=0$ for any $n$, then there exists a nontrivial radial solution to \eqref{sysnonexistence}.
\end{proposition}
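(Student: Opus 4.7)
The plan is a Lane--Emden blow-up around a maximum point of the dominant component. Set $A_n := \|u_n\|_\infty$, $B_n := \|v_n\|_\infty$ and introduce the scaling exponents
\[ a := \frac{2\alpha + 2\beta q}{pq-1}, \qquad b := \frac{2\beta + 2\alpha p}{pq-1}, \]
which are positive thanks to $pq>1$ and satisfy $a+2\alpha = qb$, $b+2\beta = pa$, so that system \eqref{sysnonexistence} is invariant under $(u,v)(x) \mapsto (\mu^a u(\mu x), \mu^b v(\mu x))$. After passing to a subsequence I would assume that the same quantity among $A_n^{1/a}$ and $B_n^{1/b}$ realises the maximum for every $n$; set $\xi_n := x_n$ or $\xi_n := x_n'$ accordingly (simply $\xi_n = 0$ in the radial case), define $\mu_n^{-1} := \max(A_n^{1/a}, B_n^{1/b})$, and rescale
\[ \tilde u_n(y) := \mu_n^a u_n(\xi_n + \mu_n y), \qquad \tilde v_n(y) := \mu_n^b v_n(\xi_n + \mu_n y). \]
A change of variables using the exponent identities shows that
\[
\begin{cases}
(-\Delta)^\alpha \tilde u_n = (s_n + |\tilde v_n|)^q \\
(-\Delta)^\beta \tilde v_n = (s_n' + |\tilde u_n|)^p
\end{cases} \text{ in } \Omega_n := \mu_n^{-1}(B_1 - \xi_n),
\]
with $s_n := \mu_n^b t_n$ and $s_n' := \mu_n^a t_n^\vartheta$, homogeneous Dirichlet data on $\partial\Omega_n$, $\|\tilde u_n\|_\infty, \|\tilde v_n\|_\infty \le 1$, and the dominant component attaining the value $1$ at $y=0$.

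The core of the argument is showing $s_n, s_n' \to 0$. Applying $K_\alpha$ to the pointwise bound $(-\Delta)^\alpha u_n \ge t_n^q$ and using the positivity of $K_\alpha(1)$ from \autoref{boggio} yields the a priori lower bounds $A_n \ge c\, t_n^q$ and, analogously, $B_n \ge c\, t_n^{\vartheta p}$. Together with the hypothesis $t_n + A_n + B_n \to \infty$, these force $\mu_n \to 0$. When $A_n^{1/a}$ dominates, $s_n \le C A_n^{1/q - b/a}$ with $b/a - 1/q > 0$ equivalent to $pq > 1$, while $s_n' \le C A_n^{\vartheta/q - 1} \to 0$ because $\vartheta < q$. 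The case when $B_n^{1/b}$ dominates is symmetric and invokes the other half of the hypothesis $\vartheta \in (1/p, q)$, namely $\vartheta p > 1$ together with $pa > b$ (again equivalent to $pq > 1$). I expect this balance of exponents to be the main obstacle of the proof, and it is what motivates the otherwise ad hoc range $\vartheta \in (1/p, q)$ appearing in the homotopy \eqref{eqnt}.

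With $s_n, s_n' \to 0$ in hand, the right-hand sides of the rescaled system are uniformly bounded, so interior elliptic regularity for the polyharmonic operator provides uniform $C^{2\alpha,\gamma}_{\mathrm{loc}}$ and $C^{2\beta,\gamma}_{\mathrm{loc}}$ bounds on $(\tilde u_n, \tilde v_n)$. Since $\operatorname{dist}(\xi_n, \partial B_1) \ge \rho$ and $\mu_n \to 0$, the domains $\Omega_n$ exhaust $\mathbb{R}^N$, and a diagonal Arzel\`a--Ascoli extraction produces a subsequence converging in $C^{2\alpha}_{\mathrm{loc}} \times C^{2\beta}_{\mathrm{loc}}$ to a classical solution $(u,v) \in C^{2\alpha}(\mathbb{R}^N) \times C^{2\beta}(\mathbb{R}^N)$ of \eqref{sysnonexistence}. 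Nontriviality follows because the dominant rescaled component equals $1$ at $y=0$, a property preserved in the limit; in the radial setting $\xi_n = 0$ ensures that the rescaling preserves radial symmetry, giving a radial limit as required.
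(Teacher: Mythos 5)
Your proof is correct and follows the same blow-up strategy as the paper, with exponents $a=\tau$, $b=\sigma$ matching the paper's choice and the scaling $\mu_n^{-1}=\max(A_n^{1/a},B_n^{1/b})$ equivalent (up to constants) to the paper's $C_n=A_n^{1/\tau}+B_n^{1/\sigma}$. The genuine difference lies in how the perturbation terms $s_n,s_n'$ are shown to vanish. The paper first establishes $t_n^\vartheta/\|u_n\|_\infty\to 0$, $t_n/\|v_n\|_\infty\to 0$ through a case distinction: for $t_n\to\infty$ it rescales by powers of $t_n$, compares against auxiliary solutions $w_n=K_\alpha(\lambda_n)$, $z_n=K_\beta(\mu_n)$, and runs a contradiction argument in $H_0^\alpha$ via a fixed test function to force $\sup_n\|w_n\|_\infty=\infty$. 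You bypass this entirely by observing that \autoref{boggio} applied to the pointwise inequalities $(-\Delta)^\alpha u_n\ge t_n^q$ and $(-\Delta)^\beta v_n\ge t_n^{\vartheta p}$ yields $\|u_n\|_\infty\ge c\,t_n^q$ and $\|v_n\|_\infty\ge c\,t_n^{\vartheta p}$ with $c=\|K_\alpha(1)\|_\infty>0$ (resp.~$\|K_\beta(1)\|_\infty$), which feeds directly into the exponent bookkeeping and produces $s_n,s_n'\to 0$ in one line once you note $pq>1$, $\vartheta p>1$ and $\vartheta<q$. This is a cleaner and shorter route to the same conclusion; in effect you exploit linearity of $K_\alpha$, which makes the paper's duality argument for $\sup_n\|w_n\|_\infty=\infty$ unnecessary. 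One small point worth spelling out if you write this up: after you extract a subsequence along which the same quantity dominates, you need the dominant norm to diverge (so $\mu_n\to 0$); the argument is that boundedness of the dominant quantity would bound the other norm and, via $A_n\ge c\,t_n^q$ or $B_n\ge c\,t_n^{\vartheta p}$, also $t_n$, contradicting \eqref{successione1}. Your compactness step (uniform interior $L^p$ plus Schauder bounds from a bounded right-hand side, then a diagonal extraction) is the standard substitute for the paper's weak-formulation passage to the limit and is equally valid.
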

\begin{proof}
Assume without loss of generality $\alpha, \beta$ even.
Let us prove first that there exists a subsequence such that 
\begin{equation}\label{successione}
\frac{t_n^{\vartheta}}{\norm{u_n}_{\infty}} \to 0 \text{ and } \frac{t_n}{\norm{v_n}_{\infty}} \to 0 
\end{equation}
with $\norm{u_n}_{\infty} > 0 $ and $\norm{v_n}_{\infty} >0$ for all $n$. Observe that if $u_n=0$, then $v_n=0$ and $t_n=0$, therefore by \eqref{successione1} we have that $u_n=0$ only for a finite number of indices $n$ and similarly for $v_n$, namely there exists a subsequence such that $u_n \ne 0, v_n \ne 0$ for any $n$. Two cases may occur: 
\begin{itemize}
\item if $t_n$ is bounded, then for example $\norm{u_n}_{\infty} \to \infty$, thus by \autoref{lemKalpha} $\norm{v_n}_{\infty} \to \infty$ as well and \eqref{successione} follows. 
\item $t_n \to \infty$; assume without loss of generality that $t_n>0$. Let us introduce the following change of variable:
\begin{align*} \tilde{u}_n&=\frac{u_n}{t_n^{\vartheta}}, \quad \lambda_n=t_n^{q-\vartheta} \\
 \tilde{v}_n&=\frac{v_n}{t_n}, \quad \mu_n=t_n^{\vartheta p - 1}. \end{align*}
Then for all $n$ one has 
\[
\begin{cases}
\begin{aligned}
(-\Delta)^{\alpha} \tilde{u}_n=\lambda_n(1+\abs{\tilde{v}_n})^q \ge \lambda_n \\
(-\Delta)^{\beta} \tilde{v}_n=\mu_n(1+ \abs{\tilde{u}_n})^p \ge \mu_n
\end{aligned} \text{ on } B_1 \\
\frac{\partial^{r} \tilde{u}_n}{\partial \nu^{r}}=0, \, r=0, \dots, \alpha-1,  \text{ on } \partial B_1 \\
\frac{\partial^{r} \tilde{v}_n}{\partial \nu^{r}}=0, \, r=0, \dots, \beta-1, \text{ on } \partial B_1
\end{cases}
\]
Moreover, since $\vartheta \in (1/p, q)$, then $\lambda_n, \mu_n \to \infty$. For any fixed $n$, let us denote by $(w_n, z_n) $ the solution to 
\[
\begin{cases}
\begin{aligned}
(-\Delta)^{\alpha}w_n = \lambda_n \\
(-\Delta)^{\beta}z_n=\mu_n
\end{aligned} \text{ on } B_1 \\
\frac{\partial^{r} w_n}{\partial \nu^{r}}=0, \, r=0, \dots, \alpha-1,  \text{ on } \partial B_1 \\
\frac{\partial^{r} z_n}{\partial \nu^{r}}=0, \, r=0, \dots, \beta-1, \text{ on } \partial B_1
\end{cases}
\]
Then one has $\tilde{u}_n \ge w_n$ and $\tilde{v}_n \ge z_n$ by the comparison principle (see \autoref{boggio}). Moreover, we claim
\[ \sup_n \norm{w_n}_{\infty}=\sup_n \norm{z_n}_{\infty} = + \infty. \]
Indeed, let us suppose by contradiction that $\sup_n \norm{w_n}_{\infty} \le c$. Then one has
\begin{equation}\label{eqnblowup} \norm{w_n}_{\alpha}^2 = \int_{B_1} \abs{\Delta^{\alpha/2}w_n}^2 = \lambda_n \int_{B_1} w_n \le  c \lambda_n. \end{equation}
However, for any $\varphi \ge 0, \ne 0$ one has 
\[ 0 < D=\int_{B_1} \varphi = \frac{1}{\lambda_n} \int_{B_1} \Delta^{\alpha/2} w_n \Delta^{\alpha/2} \varphi \]
and by \eqref{eqnblowup},
\[ 0< D \le \frac{1}{\lambda_n} \norm{w_n}_{\alpha} \norm{\varphi}_{\alpha} \le c^{\frac{1}{2}} \lambda_n^{-\frac{1}{2}} \norm{\varphi}_{\alpha} \]
which tends to $0$ as $n \to \infty$, a contradiction. Similarly for $z_n$. Then the claim holds and as a consequence $\sup_n \norm{\tilde u_n}_{\infty}=\sup_n \norm{\tilde v_n}_{\infty}= \infty$, which is equivalent to \eqref{successione}.
\end{itemize}
Now, let us consider $(t_n, u_n, v_n)$ which satisfies \eqref{successione}.
Let $A_n, B_n, C_n >0$ to be chosen in the sequel and assume first that $\norm{u_n}_{\infty}^{1/\tau} \ge \norm{v_n}_{\infty}^{1/\sigma}$ for any $n$, where $\tau=\frac{2\beta q + 2\alpha}{pq-1}$ and $\sigma=\frac{2 \alpha p + 2\beta}{pq-1}$. Define the following scaling
\[ \hat{u}_n(y)=\frac{u_n(C_n^{-1} y + x_n)}{A_n} \]
and
\[ \hat{v}_n(y)=\frac{v_n(C_n^{-1} y + x_n)}{B_n} \]
for any $y \in C_n(B_1 - x_n)=B(C_nx_n, C_n)$. Let  $\hat{\psi}_n(x)=\psi(C_n(x-x_n))$ and $\psi \in C_0^{\infty}$. Then 
\begin{multline*}
\int_{B(C_nx_n, C_n)} \Delta^{\alpha/2} \hat{u}_n \Delta^{\alpha/2} \psi \, dy =\int_{B_1} A_n^{-1} C_n^{N-2\alpha} \Delta^{\alpha/2} u_n \Delta^{\alpha/2} \hat{\psi}_n \, dx\\
=\int_{B_1} A_n^{-1} C_n^{N-2\alpha}(t_n+ \abs{v_n})^q\hat{\psi}_n \, dx \\
=\int_{B(C_nx_n, C_n)} A_n^{-1}B_n^q C_n^{-2\alpha}\left (\frac{t_n}{B_n} + \abs{\hat{v}_n}\right )^q \psi \, dy
\end{multline*}
and 
\[ \int_{B(C_nx_n, C_n)} \Delta^{\beta/2} \hat{v}_n \Delta^{\beta/2} \psi \, dy = \int_{B(C_nx_n, C_n)} B_n^{-1}A_n^p C_n^{-2\beta}\left (\frac{t_n^{\vartheta}}{A_n} + \abs{\hat{u}_n}\right )^p \psi \, dy.  \]
Now choose $A_n=C_n^{\tau}$, $B_n=C_n^{\sigma}$ and $C_n=\norm{u_n}_{\infty}^{1/\tau} + \norm{v_n}_{\infty}^{1/\sigma}$, so that
\begin{equation}\label{eqn_n} \begin{split}
\int_{B(C_nx_n, C_n)} \Delta^{\alpha/2} \hat{u}_n \Delta^{\alpha/2} \psi \, dy &=\int_{B(C_nx_n, C_n)} \left (\frac{t_n}{B_n} + \abs{\hat{v}_n}\right )^q \psi \, dy \\
\int_{B(C_nx_n, C_n)} \Delta^{\beta/2} \hat{v}_n \Delta^{\beta/2} \psi \, dy &= \int_{B(C_nx_n, C_n)} \left (\frac{t_n^{\vartheta}}{A_n} + \abs{\hat{u}_n}\right )^p \psi \, dy.
\end{split} \end{equation}
Hence, by \eqref{successione1} and \eqref{successione} one has $C_n \to \infty$, thus,
\begin{equation}\label{dist} dist(0, \partial B(C_nx_n, C_n)) = C_n dist(x_n, \partial B_1) \ge C_n \rho \to \infty. \end{equation}
Moreover, 
\[ 0 \le \frac{t_n}{B_n} = \frac{t_n}{{(\norm{u_n}_{\infty}^{1/\tau} + \norm{v_n}_{\infty}^{1/\sigma})}^{\sigma}} \le \frac{t_n}{\norm{v_n}_{\infty}} \to 0 \]
and 
\[ 0 \le \frac{t_n^{\vartheta}}{A_n} = \frac{t_n^{\vartheta}}{{(\norm{v_n}_{\infty}^{1/\sigma} + \norm{u_n}_{\infty}^{1/\tau})}^{\tau}} \le \frac{t_n^{\vartheta}}{ \norm{u_n}_{\infty}} \to 0 .\]
Let $B$ any closed ball. Then by \eqref{dist} $B$ is contained in $B(C_nx_n, C_n)$ for $n$ large enough. 
Moreover, since the embedding $C^{2\alpha, \gamma}(\bar{\Omega}) \hookrightarrow C^{2\alpha}(\bar{\Omega})$ is compact, see \cite{AdamsFournier03},  then $(\hat{u}_n, \hat{v}_n) \in C^{2\alpha, \gamma}(\bar B) \times C^{2\beta, \gamma}(\bar B)$ converges up to a subsequence to $(\hat u,\hat v)$ in $C^{2\alpha}(\bar B) \times C^{2\beta}(\bar B)$; by considering integrals in \eqref{eqn_n} on the ball $B$ and letting $n \to \infty$, one has 
\[ \begin{split}
\int_{B} \Delta^{\alpha/2}\hat u \Delta^{\alpha/2} \psi \, dy &=\int_{B} \abs{\hat v}^q \psi \, dy \\
\int_{B} \Delta^{\beta/2} \hat v \Delta^{\beta/2} \psi \, dy &= \int_{B} \abs{\hat u}^p \psi \, dy
\end{split} \]
for any $\psi \in C_0^{\infty}(B)$. 
Note that $(\hat u,\hat v) \ne (0,0)$: indeed, 
\begin{align*} \hat{u}_n^{1/\tau}(0)&=\frac{u_n(x_n)^{1/\tau}}{\norm{u_n}_{\infty}^{1/\tau} + \norm{v_n}_{\infty}^{1/\sigma}}\\&=\frac{\norm{u_n}_{\infty}^{1/\tau}}{\norm{u_n}_{\infty}^{1/\tau} + \norm{v_n}_{\infty}^{1/\sigma}} = \frac{1}{1+ \norm{v_n}_{\infty}^{1/\sigma}/\norm{u_n}_{\infty}^{1/\tau}} \ge \frac{1}{2} \end{align*}
and therefore $\hat u(0) \ne 0$. 
Let us now take a larger ball $\tilde B$ and repeat the argument on the subsequence obtained at the previous step. 
Taking balls larger and larger and iterating the reasoning, we get two Cantor diagonal subsequences converging on all compacts of $\mathbb{R}^N$ to nontrivial functions $(\hat u,\hat v) \in C^{2\alpha}(\mathbb{R}^N) \times C^{2\beta}(\mathbb{R}^N)$ satisfying
\[ \begin{split}
\int_{\mathbb{R}^N} \Delta^{\alpha/2}\hat u \Delta^{\alpha/2} \psi \, dy &=\int_{\mathbb{R}^N} \abs{\hat v}^q \psi \, dy \\
\int_{\mathbb{R}^N} \Delta^{\beta/2}\hat v \Delta^{\beta/2} \psi \, dy &= \int_{\mathbb{R}^N} \abs{\hat u}^p \psi \, dy.
\end{split} \]
If $\norm{u_n}_{\infty}^{1/\tau} \le \norm{v_n}_{\infty}^{1/\sigma}$ we take $x_n'$ instead of $x_n$ in the definition of $\hat{u}_n$ and $\hat{v}_n$ and at the end we observe 
\[ \hat{v}_n^{1/\sigma}(0) \ge \frac{1}{2} .\]
This concludes the proof.
\end{proof}

\section{A priori estimates}
Let us consider the problem
\begin{equation}\label{problemfg}
\begin{cases}
\begin{aligned}
(-\Delta)^{\alpha} u=g(v) \\
(-\Delta)^{\beta} v= f(u)
\end{aligned} \text{ in } B_1 \\
\frac{\partial^{r} u}{\partial \nu^{r}}=0, \, r=0, \dots, \alpha-1,  \text{ on } \partial B_1 \\
\frac{\partial^{r} v}{\partial \nu^{r}}=0, \, r=0, \dots, \beta-1, \text{ on } \partial B_1
\end{cases}
\end{equation}
where $f, g \colon [0, \infty) \to \mathbb{R}$ are continuous, positive and non decreasing. 
The aim of this Section is to obtain information on the position of global maxima of solutions to \eqref{problemfg}, in order to apply \autoref{lemblowup}. More precisely, we show
\begin{proposition}\label{propestimates}
Let $(u, v) \in C_0^{2\alpha}(\bar B_1) \times C_0^{2\beta}(\bar B_1)$ nontrivial solution to \eqref{problemfg}. Then it is radially symmetric and strictly decreasing in the radial variable. In particular, $u$ and $v$ attain their maximum at $0$.
\end{proposition}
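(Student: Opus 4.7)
The plan is to adapt the Gidas--Ni--Nirenberg moving planes method to the polyharmonic Dirichlet setting on the ball, using Boggio's positivity (\autoref{boggio}) and the Hopf-type lemma (\autoref{hopf}) as substitutes for the second-order strong maximum principle, which is unavailable at this order. As a preliminary, since $f,g$ are positive, \autoref{boggio} applied to each equation of \eqref{problemfg} yields $u>0$ and $v>0$ in $B_1$, while \autoref{hopf} gives $\partial^\alpha u/\partial \nu^\alpha >0$ and $\partial^\beta v/\partial \nu^\beta >0$ on $\partial B_1$. Fix a unit direction, say $e_1$, and for $\lambda\in(-1,0]$ set $T_\lambda=\{x_1=\lambda\}$, $\Sigma_\lambda=B_1\cap\{x_1<\lambda\}$, $x^\lambda=(2\lambda-x_1,x_2,\dots,x_N)$, and define $u_\lambda(x)=u(x^\lambda)$, $v_\lambda(x)=v(x^\lambda)$, $U_\lambda=u_\lambda-u$, $V_\lambda=v_\lambda-v$ on $\Sigma_\lambda$. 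Invariance of $(-\Delta)^\alpha$ and $(-\Delta)^\beta$ under reflection gives
\begin{equation*}
(-\Delta)^\alpha U_\lambda = g(v_\lambda)-g(v), \qquad (-\Delta)^\beta V_\lambda = f(u_\lambda)-f(u) \quad \text{in } \Sigma_\lambda,
\end{equation*}
with $U_\lambda,V_\lambda\ge 0$ on $\Sigma_\lambda\cap\partial B_1$ (where $u=v=0$ while $u_\lambda,v_\lambda>0$), and with $U_\lambda,V_\lambda$ and all their odd-order $x_1$-derivatives vanishing on $T_\lambda$ by symmetry of the reflection.

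The goal is to prove $U_\lambda\ge 0$ and $V_\lambda\ge 0$ on $\Sigma_\lambda$ for every $\lambda\in(-1,0]$. Let $\Lambda=\sup\{\lambda\in(-1,0]: U_\mu\ge 0,\, V_\mu\ge 0 \text{ in }\Sigma_\mu \text{ for all }\mu\in(-1,\lambda]\}$. For $\lambda$ close to $-1$ the cap $\Sigma_\lambda$ is narrow; a Boggio-integral estimate, together with monotonicity of $f,g$ and the Lipschitz dependence of $U_\lambda,V_\lambda$ on $\lambda$, yields $\Lambda>-1$. Suppose by contradiction $\Lambda<0$. By continuity $U_\Lambda,V_\Lambda\ge 0$ in $\Sigma_\Lambda$; since $f,g$ are nondecreasing, the right-hand sides of the system are nonnegative, and \autoref{boggio} combined with \autoref{hopf} forces the dichotomy \emph{either} full symmetry of $(u,v)$ across $T_\Lambda$---excluded by the Dirichlet conditions on $\partial B_1\cap\{x_1>\Lambda\}\neq\varnothing$ since this would force $u,v$ to vanish on a nonempty open subset of $\partial B_1$ reflected into $B_1$---\emph{or} strict positivity $U_\Lambda,V_\Lambda>0$ in $\Sigma_\Lambda$, with strictly positive odd-order $x_1$-derivatives on $T_\Lambda\cap B_1$. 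A standard continuity argument (again via Boggio's formula) then permits increasing $\Lambda$ slightly while preserving $U_\lambda,V_\lambda\ge 0$, contradicting maximality. Hence $\Lambda=0$; running the analogous argument from $\lambda=1$ downward and in every direction $e\in S^{N-1}$ gives radial symmetry. Strict monotonicity in the radial variable follows because the moving-planes comparison is strict for every $\lambda\in(-1,0)$, with the Hopf conclusion \autoref{hopf} ruling out radial plateaus; in particular $u$ and $v$ attain their maximum only at $0$.

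The main obstacle is that for $\alpha,\beta\ge 2$ Boggio's positivity (\autoref{boggio}) does not hold on the cap $\Sigma_\lambda$, which is not a ball, so one cannot apply a direct comparison principle on $\Sigma_\lambda$ as in the classical second-order argument. The workaround is to express $U_\lambda$ and $V_\lambda$ through Boggio's Green's functions $G_\alpha, G_\beta$ of the whole ball $B_1$---writing $u=K_\alpha(g(v))$, $v=K_\beta(f(u))$, splitting the integration across $T_\lambda$, and exploiting the reflection symmetries of the kernels to extract the required sign information. This is exactly where the hypothesis $\Omega=B_1$ becomes essential and where the higher-order argument diverges from the classical Gidas--Ni--Nirenberg one.
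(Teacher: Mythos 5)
Your overall plan — moving planes in every direction, with the key observation that comparison principles are unavailable on the caps $\Sigma_\lambda$ for $\alpha,\beta\ge 2$, forcing one to go through the Green's function of the whole ball $B_1$ — is the right approach and matches the paper's strategy. You also correctly identify that this is where $\Omega=B_1$ becomes essential. However, the central step of your argument, the dichotomy at $\lambda=\Lambda$ ("either full symmetry across $T_\Lambda$ or strict positivity $U_\Lambda,V_\Lambda>0$ in $\Sigma_\Lambda$, with strictly positive odd-order $x_1$-derivatives on $T_\Lambda\cap B_1$"), is asserted without proof, and it is precisely the step that the higher-order setting makes hard. For the Laplacian this dichotomy is the strong maximum principle on $\Sigma_\lambda$; for $(-\Delta)^\alpha$ with $\alpha\ge 2$, Boggio's positivity (\autoref{boggio}) applies only on the ball, and the Hopf-type lemma (\autoref{hopf}) only gives boundary information on $\partial B_1$, not a strong maximum/comparison principle on $\Sigma_\lambda$. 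Your concluding paragraph names the right fix — split the Green's function representation across $T_\lambda$ and exploit the reflection symmetries of $G^\alpha,G^\beta$ — but you never actually state or use the needed kernel inequalities, which in the paper appear as Lemma~\ref{GGS1bis}: $G^{\alpha}(x,y)>\max\{G^\alpha(x,y^{i,\lambda}),G^\alpha(x^{i,\lambda},y)\}$, $G^\alpha(x,y)-G^\alpha(x^{i,\lambda},y^{i,\lambda})>|G^\alpha(x,y^{i,\lambda})-G^\alpha(x^{i,\lambda},y)|$, and $\partial_{x_i}G^\alpha(x,y)+\partial_{x_i}G^\alpha(x,y^{i,\lambda})\le 0$ on $T_{i,\lambda}$. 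These are what replace the strong maximum principle, and without them the strictness and sliding step do not go through.

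Two further gaps worth noting. First, the paper must handle the fact that $f(0),g(0)\ne 0$ (indeed it later uses $f(u)=(t+|u|)^p$), which causes trouble when $y^{i,\lambda}$ lands outside $B_1$; this is dealt with by extending $u,v$ by zero and replacing $f,g$ by $\tilde f,\tilde g$ that vanish at $0$ (Lemma~\ref{GGS2}), a detail your proposal does not address. Second, your claimed starting point $\Lambda>-1$ via a "Boggio-integral estimate" plus "Lipschitz dependence" is vague; the paper instead starts the plane from the boundary using \autoref{hopf} together with the Dirichlet boundary data $\partial^r u/\partial\nu^r=0$, $r\le\alpha-1$, and a compactness argument (Lemmas~\ref{GGS3} and \ref{GGS4}), which is a different (and concretely workable) mechanism. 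In short: right roadmap, but the load-bearing lemma about the reflected Green's function is missing, and the $f(0)\ne 0$ issue and the starting step need real arguments.
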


In order to prove \autoref{propestimates}, we apply the moving planes technique \cite{BerestyckiNirenberg91} and we adapt the classical symmetry result by Gidas, Ni, Nirenberg \cite{GidasNiNirenberg79}, by extending to the case of systems a few proofs of Section 7 in \cite{GazzolaGrunauSweers10}, where the case of a single equation is considered, see also \cite{BerchioGazzolaWeth08}. 
Define 
\begin{align*} T_{i, \lambda}&=\{ x=(x_1, \dots, x_N) \in \mathbb{R}^N: x_i=\lambda \} \\
\Sigma_{i, \lambda}&=\{ x=(x_1, \dots, x_N)\in B_1: x_i<\lambda \} \end{align*}
where $\lambda \in [0, 1]$, and let $x^{i, \lambda}$ denote the reflection of $x$ about $T_{i, \lambda}$. 

The next Lemmas constitute the preparation to the moving planes procedure.
\begin{lemma}[see Lemma 7.5 in \cite{GazzolaGrunauSweers10}]\label{GGS1}
Assume $h \in L^{\infty}(B_1)$ and let $u \in H_0^{\alpha}(B_1)$ satisfy
\[ \braket{u, v}_{H_0^{\alpha}} = \int_{B_1} h v \, dx \quad \text{ for all } v \in H_0^{\alpha}(B_1) \]
i.e., $u$ is a weak solution to $(-\Delta)^{\alpha} u =h $ in $B_1$ under Dirichlet boundary conditions. 
Then $u$ satisfies
 \[ D^{\gamma} u(x)=\int_{B_1} D_x^{\gamma} G^{\alpha}(x, y) h(y) \, dy \quad \text{for every } x \in \bar{B}_1 \]
 where $G^{\alpha}(x, y)$ is the Green function of $(-\Delta)^{\alpha}$ on $B_1$ with Dirichlet boundary conditions.
\end{lemma}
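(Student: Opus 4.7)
The plan is to establish first the integral representation for $u$ itself and then obtain the stated formula for its derivatives by differentiating under the integral sign. Define
\[ w(x) := \int_{B_1} G^{\alpha}(x,y) h(y) \, dy, \]
which is well-defined and continuous on $\bar{B}_1$ thanks to $h \in L^{\infty}(B_1)$ together with the standard pointwise bounds for the Green kernel on the ball (where Boggio's explicit formula is available). By multiplying the defining distributional identity $(-\Delta)^{\alpha}_x G^{\alpha}(\cdot, y) = \delta_y$ by a test function $v \in H_0^{\alpha}(B_1)$ and integrating by parts (via a mollification of $G^{\alpha}$ to justify the computation), one verifies that $w \in H_0^{\alpha}(B_1)$ and satisfies
\[ \braket{w, v}_{H_0^{\alpha}} = \int_{B_1} h v \, dx \qquad \text{for every } v \in H_0^{\alpha}(B_1). \]
Since $\braket{\cdot, \cdot}_{H_0^{\alpha}}$ is coercive on $H_0^{\alpha}(B_1)$, the weak solution is unique, and so $u \equiv w$ on $\bar{B}_1$.

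Next I would pass $D^{\gamma}$ inside the integral. By elliptic regularity applied to \eqref{K} with $h \in L^{\infty}(B_1)$, the function $u$ lies in $W^{2\alpha, p}(B_1)$ for every $p<\infty$, hence in $C^{2\alpha-1, \eta}(\bar{B}_1)$ for any $\eta \in (0,1)$, so the derivatives $D^{\gamma}u$ are classically defined in the admissible range. Using Boggio's formula, the derivatives of the Green kernel satisfy an estimate of the form
\[ \abs{D_x^{\gamma} G^{\alpha}(x,y)} \le C\, \abs{x-y}^{2\alpha - N - \abs{\gamma}} \]
with an appropriate boundary factor. Since $N > 2\alpha$, this yields an $L^1_y$ bound uniform in $x \in \bar{B}_1$ as long as $\abs{\gamma} < 2\alpha$, and dominated convergence applied to the difference quotients of $w$ gives
\[ D^{\gamma} u(x) = \int_{B_1} D_x^{\gamma} G^{\alpha}(x,y) h(y) \, dy \]
for every $x \in \bar{B}_1$. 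Continuity of both sides up to the boundary, combined with the boundary decay of $G^{\alpha}$ and its derivatives, extends the identity to $\partial B_1$.

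The main obstacle is the pointwise control of $D_x^{\gamma} G^{\alpha}(x,y)$, which has both a diagonal singularity and a delicate boundary behaviour coming from the multiple factors vanishing on $\partial B_1$. Without a closed-form expression for the Green function, establishing a dominating function in $y$ that is uniform in $x$ would require nontrivial abstract kernel analysis; it is precisely the availability of Boggio's formula on the ball that reduces the required estimate to an essentially algebraic computation and makes the dominated-convergence argument go through cleanly. Once these kernel bounds are in hand, the rest of the argument is a routine application of the uniqueness of the variational solution and differentiation under the integral.
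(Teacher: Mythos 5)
The paper does not actually prove this lemma; it is cited verbatim as Lemma 7.5 of Gazzola--Grunau--Sweers, and the accompanying remark only points to the existence of $u$ via the a priori estimate \eqref{stimanorma}. There is therefore no in-paper argument to compare against, so I will assess your sketch on its own terms against the cited source.

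Your route --- identify $u$ with the Green potential $w(x)=\int_{B_1} G^{\alpha}(x,y)h(y)\,dy$ by coercivity and uniqueness of the variational problem, then differentiate under the integral using the Boggio-type pointwise bound $\abs{D_x^{\gamma}G^{\alpha}(x,y)}\le C\abs{x-y}^{2\alpha-N-\abs{\gamma}}$ --- is the standard one, and in substance it is what the cited source does. Two points should be made precise. First, the range of the multi-index: your dominated-convergence step requires $\abs{\gamma}\le 2\alpha-1$ so that the dominating kernel $\abs{x-y}^{2\alpha-N-\abs{\gamma}}$ is uniformly in $L^1_y$; the statement as written is silent about this, but the paper only ever invokes the lemma with $\abs{\gamma}=1$, so the restriction is harmless and should simply be recorded. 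Second, bounding the difference quotient of $w$ requires a dominating function valid uniformly over small translations of the base point $x$, not merely the pointwise bound at $x$ itself; this is routine (take $\abs{t}$ small, control $\sup_{s\in[0,1]}\abs{D_x^{\gamma}G^{\alpha}(x+ste_i,y)}$ by the kernel bound centred near $x$, still uniformly integrable), but worth saying explicitly since you invoke dominated convergence somewhat loosely. You are also right that it is precisely the availability of Boggio's closed form on the ball (and hence the cited estimates on $G^{\alpha}$ and its derivatives, cf.\ Theorem 4.6 and surrounding material in the same book) that makes this clean; on a general domain these bounds are far from automatic. With these provisos your argument is correct and matches the intended proof.
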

\begin{remark}
Note that such a function $u$ exists due to \eqref{stimanorma}, see Theorem 2.20 in \cite{GazzolaGrunauSweers10}. 
\end{remark}
\begin{lemma}[Lemma 7.7 and 7.8 in \cite{GazzolaGrunauSweers10}]\label{GGS1bis}
For all $x, y \in \Sigma_{i, \lambda}$  $x \ne y$, we have 
\[ G^{\alpha}(x,y) > \max \{ G^{\alpha}(x, y^{i, \lambda}), G^{\alpha}(x^{i, \lambda}, y) \} \]
\[ G^{\alpha}(x, y)- G^{\alpha}(x^{i, \lambda} y^{i, \lambda}) > \abs{ G^{\alpha}(x, y^{i, \lambda}) - G^{\alpha}(x^{i, \lambda}, y)}. \]
Moreover, for every $x \in B_1 \cap T_{i, \lambda}$ and $y \in \Sigma_{i, \lambda}$ we have
\begin{equation}\label{eqnG} \partial_{x_i}G^{\alpha}(x,y)<0 \quad \text{ and} \quad  \partial_{x_i}G^{\alpha}(x,y) +  \partial_{x_i}G^{\alpha}(x, y^{i, \lambda}) \le 0. \end{equation}
The second inequality in \eqref{eqnG} is strict if $\lambda >0$.
\end{lemma}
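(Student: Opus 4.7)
The proof is based on Boggio's explicit formula for the Green function of $(-\Delta)^{\alpha}$ on $B_1$ under Dirichlet conditions:
\[
G^{\alpha}(x,y) = k_{N,\alpha}\,|x-y|^{2\alpha-N}\int_{1}^{[x,y]/|x-y|}(v^{2}-1)^{\alpha-1}v^{1-N}\,dv,\qquad [x,y]^{2}:=|x-y|^{2}+(1-|x|^{2})(1-|y|^{2}).
\]
The substitution $u=(v|x-y|)^{2}$ rewrites this as $G^{\alpha}(x,y)=c_{N,\alpha}\int_{s}^{t}(u-s)^{\alpha-1}u^{-N/2}\,du=:H(s,t)$, with $s=|x-y|^{2}$, $t=[x,y]^{2}$. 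Differentiating under the integral sign gives $\partial_{t}H(s,t)=c_{N,\alpha}(t-s)^{\alpha-1}t^{-N/2}>0$ whenever $t>s$, and a Leibniz-rule computation shows $\partial_{s}H<0$ (for $\alpha\ge 2$ this is an integral of $-(\alpha-1)(u-s)^{\alpha-2}u^{-N/2}$; for $\alpha=1$ it is the boundary contribution $-c_{N,1}s^{-N/2}$). These two monotonicity properties of $H$ will drive everything.

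\textbf{Geometric identities under reflection.} Writing $a=x_{i}$, $b=y_{i}$, both in $(-1,\lambda)$, a direct calculation yields
\[
|x^{i,\lambda}-y|^{2}-|x-y|^{2}=4(\lambda-a)(\lambda-b),\qquad |x^{i,\lambda}-y^{i,\lambda}|=|x-y|,\qquad |x-y^{i,\lambda}|=|x^{i,\lambda}-y|,
\]
and $|x^{i,\lambda}|^{2}-|x|^{2}=4\lambda(\lambda-a)$, strict when $\lambda>0$. In particular, $1-|x^{i,\lambda}|^{2}\le 1-|x|^{2}$, so the reflection $(x,y)\mapsto(x^{i,\lambda},y^{i,\lambda})$ preserves $s$ and strictly decreases $t$ (when $\lambda>0$). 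Combined with $\partial_{t}H>0$ this immediately gives $G^{\alpha}(x,y)>G^{\alpha}(x^{i,\lambda},y^{i,\lambda})$. Likewise, $(x,y)\mapsto(x^{i,\lambda},y)$ strictly increases $s$ and, after a short computation, changes $t$ by a sign-definite quantity; the two effects combine through $\partial_{s}H<0$ (respectively $\partial_{t}H>0$) to yield $G^{\alpha}(x,y)>G^{\alpha}(x^{i,\lambda},y)$, and the analogous inequality for $y^{i,\lambda}$. This proves the first display.

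\textbf{The rectangle inequality.} Writing $s_{\ast}:=|x-y^{i,\lambda}|^{2}=|x^{i,\lambda}-y|^{2}$, the identity above makes the two ``mixed" Green function values $G^{\alpha}(x,y^{i,\lambda})$ and $G^{\alpha}(x^{i,\lambda},y)$ into $H(s_{\ast},\cdot)$ for two values of $t$. Consequently $G^{\alpha}(x,y^{i,\lambda})-G^{\alpha}(x^{i,\lambda},y)=\int_{t(x^{i,\lambda},y)}^{t(x,y^{i,\lambda})}\partial_{t}H(s_{\ast},\tau)\,d\tau$, while $G^{\alpha}(x,y)-G^{\alpha}(x^{i,\lambda},y^{i,\lambda})=\int_{t(x^{i,\lambda},y^{i,\lambda})}^{t(x,y)}\partial_{t}H(s(x,y),\tau)\,d\tau$. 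A bookkeeping argument using the length comparison $t(x,y)-t(x^{i,\lambda},y^{i,\lambda})\ge |t(x,y^{i,\lambda})-t(x^{i,\lambda},y)|$ (which follows from the explicit expressions for $t$) together with the strict monotonicity of $\partial_{t}H$ in $s$ (smaller $s$ gives a larger integrand, since $s(x,y)<s_{\ast}$) yields the absolute-value inequality.

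\textbf{Derivative estimates on $T_{i,\lambda}$.} Differentiating Boggio's formula at $x_{i}=\lambda$ gives
\[
\partial_{x_{i}}G^{\alpha}(x,y)=2H_{s}(\lambda-y_{i})+2H_{t}\bigl(\lambda|y|^{2}-y_{i}\bigr),
\]
where $H_{s},H_{t}$ are evaluated at $(s(x,y),t(x,y))$. Since $y_{i}<\lambda$ the first term is strictly negative; the comparison to the second is obtained by substituting the explicit integral expressions of $H_{s}$ and $H_{t}$ and using $s<t$. The second estimate $\partial_{x_{i}}G^{\alpha}(x,y)+\partial_{x_{i}}G^{\alpha}(x,y^{i,\lambda})\le 0$ follows from the observation that when $x\in T_{i,\lambda}$, the pairs $(x,y)$ and $(x,y^{i,\lambda})$ have equal $|x-y|$ and equal $[x,y]$ (since reflection about $T_{i,\lambda}$ of $y$ from $x$ on $T_{i,\lambda}$ preserves both), so the sum is controlled by a single sign-definite integrand; strictness when $\lambda>0$ comes from the strict inequality $|x^{i,\lambda}|^{2}>|x|^{2}$ for $y$ (or equivalently from the strict sign of $H_{t}$).

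\textbf{Main obstacle.} The technical heart is the rectangle inequality and the derivative comparison on $T_{i,\lambda}$: neither $s$ nor $t$ alone is monotone under each reflection, so one must simultaneously track both and exploit the explicit form of Boggio's integral. These computations form precisely Lemmas~7.7--7.8 of \cite{GazzolaGrunauSweers10}, to which we appeal.
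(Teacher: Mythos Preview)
The paper does not prove this lemma at all: it is stated with the attribution ``Lemma 7.7 and 7.8 in \cite{GazzolaGrunauSweers10}'' and used as a black box. Your proposal ultimately does the same thing---you explicitly close by appealing to \cite{GazzolaGrunauSweers10}---so at the level of what is actually established, your approach and the paper's coincide.

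That said, the sketch you add on top contains a slip worth flagging. In the paragraph on the derivative estimates on $T_{i,\lambda}$ you assert that for $x\in T_{i,\lambda}$ the pairs $(x,y)$ and $(x,y^{i,\lambda})$ have equal $|x-y|$ \emph{and} equal $[x,y]$. The first is true (the $i$-th coordinate contribution is $(\lambda-y_i)^2$ in both cases), but the second is false whenever $\lambda>0$: one has $|y^{i,\lambda}|^2=|y|^2+4\lambda(\lambda-y_i)>|y|^2$, hence $(1-|y^{i,\lambda}|^2)<(1-|y|^2)$ and $[x,y^{i,\lambda}]<[x,y]$. So the two Green values are not evaluations of $H$ at the same $(s,t)$, and your ``single sign-definite integrand'' explanation for the sum inequality does not go through as written. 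The correct argument (as in \cite{GazzolaGrunauSweers10}) differentiates the Boggio integral and tracks the resulting terms more carefully.

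A smaller point: for the first displayed inequality your phrase ``changes $t$ by a sign-definite quantity'' is vague. The clean way to see $G^\alpha(x,y)>G^\alpha(x^{i,\lambda},y)$ via your $H$-representation is the substitution $u=w+s$, giving $H(s,t)=c\int_0^{t-s}w^{\alpha-1}(w+s)^{-N/2}\,dw$; then the reflection of $x$ alone strictly increases $s$ and strictly decreases $t-s=(1-|x|^2)(1-|y|^2)$, and both effects push the integral down. This fixes that step without further work.
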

\noindent Let us define 
\[ \tilde{f}(s)=\begin{cases}
f(s) & \text{if } s>0 \\
0 & \text{if } s=0
\end{cases} 
\quad
\tilde{g}(s)=\begin{cases}
g(s) & \text{if } s>0 \\
0 & \text{if } s=0.
\end{cases} 
\]
Note that $f(0) \ne 0$, $g(0)\ne 0$ in general: indeed, we will apply the results of this Section to $f(u)=(t+\abs{u})^p$ and $g(v)=(t+\abs{v})^q$. 
From now on, let us extend $u, v$ out of $B_1$ by imposing $u=v=0$.
\begin{lemma}\label{GGS2}
Let $(u, v) \in C_0^{2\alpha}(\bar B_1) \times C_0^{2\beta}(\bar B_1)$, nontrivial solution to \eqref{problemfg}. Suppose $u(y) \ge u(y^{i, \lambda})$ and $v(y) \ge v(y^{i, \lambda})$ for all $y \in \Sigma_{i, \lambda}$. Then the following inequalities hold:
\begin{itemize}
\item $f(u(y)) \ge \tilde{f} (u(y^{i, \lambda})) \ge 0$ and $g(v(y)) \ge \tilde{g} (v(y^{i, \lambda})) \ge 0$ for all $y \in \Sigma_{i, \lambda}$
\item there exist two nonempty open sets $\mathcal{O}^u_{i, \lambda}, \mathcal{O}^v_{i, \lambda} \subset \Sigma_{i, \lambda}$ such that $f(u(y)) > \tilde{f}(u(y^{i, \lambda}))$ or $\tilde{f}(u(y^{i, \lambda})) >0$ for all $y \in \mathcal{O}^u_{i, \lambda}$ and  $g(v(y)) > \tilde{g}(v(y^{i, \lambda}))$ or $\tilde{g}(v(y^{i, \lambda})) >0$ for all $y \in \mathcal{O}^v_{i, \lambda}$.
\end{itemize}
\end{lemma}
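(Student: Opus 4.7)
The plan is to deduce both bullets by a simple case analysis on whether the reflected point $y^{i,\lambda}$ lies inside $B_1$ or not, after first establishing strict interior positivity of $u$ and $v$.

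\textbf{First step: strict positivity.} Since $f,g$ are positive on $[0,\infty)$, we have $g(v)>0$ and $f(u)>0$ pointwise in $B_1$. Writing $u=K_\alpha(g(v))$ and $v=K_\beta(f(u))$ via \autoref{lemKalpha}, and using that the Dirichlet Green function on the ball given by Boggio's formula is strictly positive in the interior (\autoref{boggio}), I get $u>0$ and $v>0$ on $B_1$. Since we extended $u=v=0$ outside $B_1$, one has $u(z)>0$ iff $z\in B_1$, and likewise for $v$.

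\textbf{Second step: first bullet.} Fix $y\in\Sigma_{i,\lambda}$; in particular $y\in B_1$ so $u(y)>0$ and $f(u(y))>0$. Split into two cases on the reflected point. If $y^{i,\lambda}\in B_1$ then $u(y^{i,\lambda})>0$, so $\tilde f(u(y^{i,\lambda}))=f(u(y^{i,\lambda}))$, and the hypothesis $u(y)\ge u(y^{i,\lambda})>0$ together with monotonicity of $f$ gives $f(u(y))\ge f(u(y^{i,\lambda}))=\tilde f(u(y^{i,\lambda}))>0$. If $y^{i,\lambda}\notin B_1$, then $u(y^{i,\lambda})=0$ by the extension, hence $\tilde f(u(y^{i,\lambda}))=0$ and $f(u(y))>0=\tilde f(u(y^{i,\lambda}))$. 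In either case $f(u(y))\ge \tilde f(u(y^{i,\lambda}))\ge 0$. The argument for $g,v$ is identical.

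\textbf{Third step: second bullet.} The previous case analysis in fact shows that for every $y\in\Sigma_{i,\lambda}$ exactly one of the two strict alternatives occurs: if $y^{i,\lambda}\in B_1$ then $\tilde f(u(y^{i,\lambda}))>0$, whereas if $y^{i,\lambda}\notin B_1$ then $f(u(y))>\tilde f(u(y^{i,\lambda}))$. Consequently I may simply take $\mathcal{O}^u_{i,\lambda}=\Sigma_{i,\lambda}$, and similarly $\mathcal{O}^v_{i,\lambda}=\Sigma_{i,\lambda}$; this set is open and nonempty for every $\lambda\in[0,1]$ (it contains, e.g., a neighborhood of $-e_i$).

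The only delicate point is the strict positivity in the first step — everywhere else is a direct unpacking of definitions and the monotonicity hypothesis. Provided I invoke Boggio's positivity on the ball correctly (and remark that $g(v),f(u)$ are strictly positive thanks to the hypothesis that $f,g$ are positive, not merely nonnegative), the remainder of the proof is mechanical.
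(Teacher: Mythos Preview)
Your proof is correct. The first two steps coincide with the paper's argument (positivity via Boggio, then monotonicity), just spelled out in more detail. For the second bullet you take a more direct route than the paper: you observe that since $f>0$ on $[0,\infty)$, one has $f(u(y))>0$ for every $y\in\Sigma_{i,\lambda}$, so the disjunction holds at \emph{every} point and one may take $\mathcal{O}^u_{i,\lambda}=\Sigma_{i,\lambda}$. The paper instead argues by contradiction that $f(u)\not\equiv 0$ on $\Sigma_{i,\lambda}$, using the PDE ($f(u)\equiv 0$ would force $(-\Delta)^\beta v=0$, hence $v=0$). Given the standing hypothesis that $f,g$ are strictly positive, your argument is the cleaner one; the paper's contradiction is not actually needed under that assumption. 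Two minor quibbles: ``exactly one of the two strict alternatives'' should be ``at least one'' (both can hold when $y^{i,\lambda}\in B_1$ and $u(y)>u(y^{i,\lambda})$), and $-e_i$ lies on $\partial B_1$, not in $\Sigma_{i,\lambda}$; replace it by, say, $-\tfrac12 e_i$.
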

\begin{proof}
First note that $u, v>0$ in $B_1$ due to \autoref{boggio}. 
The inequalities $f(u(y)) \ge \tilde{f} (u(y^{i, \lambda})) \ge 0$ and $g(v(y)) \ge \tilde{g} (v(y^{i, \lambda})) \ge 0$ follow from the monotonicity and positivity assumptions on $f, g$. \\
For the second statement it is enough to show that $f(u) \ne 0$ and $g(v)\ne 0$ in $ \Sigma_{i, \lambda}$. By contradiction, if $f(u)=0$ on $\Sigma_{i, \lambda}$ then the above inequalities  imply $\tilde{f}(u(y^{i,\lambda}))=0$, however since $u >0$ this means $f(u)=0$ on $B_1$. In turn, this implies $(-\Delta)^{\beta} v=0$, thus $v=0$, which contradicts the positivity of $v$. Similarly for $g(v)$.
\end{proof}
The following result will allow us to slide the hyperplane. 
\begin{lemma}\label{GGS3}
Let $(u, v)\in C_0^{2\alpha}(\bar B_1) \times C_0^{2\beta}(\bar B_1)$ nontrivial solution to \eqref{problemfg}. Suppose $u(x) \ge u(x^{i, \lambda})$ and $v(x) \ge v(x^{i, \lambda})$ for all $x \in \Sigma_{i, \lambda}$. Then there exists $\gamma \in (0, \lambda)$ such that $\frac{\partial u}{\partial x_i} <0, \frac{\partial v}{\partial x_i} <0$ on $T_{i, l} \cap B_1$ for all $l \in (\lambda - \gamma, \lambda)$. 
\end{lemma}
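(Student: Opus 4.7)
The plan is to proceed in two stages. First I will establish strict negativity of $\partial_{x_i} u$ on the single slice $T_{i,\lambda} \cap B_1$ via a Green-function representation argument, and then I will propagate the estimate to nearby slices $T_{i,l} \cap B_1$ by combining uniform continuity on a compact interior portion with a higher-order Hopf-type estimate near the edge $T_{i,l} \cap \partial B_1$. I will treat $u$ explicitly; the argument for $v$ is identical with $\alpha$ replaced by $\beta$ and $g$ replaced by $f$.

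For the first stage, \autoref{GGS1} applied to $u = K_\alpha(g(v))$ gives
\[
\partial_{x_i} u(x) = \int_{B_1} \partial_{x_i} G^\alpha(x,y)\, g(v(y))\, dy, \qquad x \in T_{i,\lambda} \cap B_1.
\]
I split $B_1$ as $\Sigma_{i,\lambda} \cup (B_1 \cap \{y_i > \lambda\})$ and change variable $y \mapsto y^{i,\lambda}$ in the second piece; writing $\Sigma^*_{i,\lambda} = \{y \in \Sigma_{i,\lambda} : y^{i,\lambda} \in B_1\}$, this rewrites $\partial_{x_i} u(x)$ as the sum
\[
\int_{\Sigma_{i,\lambda} \setminus \Sigma^*_{i,\lambda}} \partial_{x_i} G^\alpha(x,y)\, g(v(y))\, dy + \int_{\Sigma^*_{i,\lambda}} \bigl[\partial_{x_i} G^\alpha(x,y)\, g(v(y)) + \partial_{x_i} G^\alpha(x,y^{i,\lambda})\, g(v(y^{i,\lambda}))\bigr] dy.
\]
The first integrand is strictly negative wherever nonzero, since $\partial_{x_i} G^\alpha(x,y) < 0$ on $\Sigma_{i,\lambda}$ by \autoref{GGS1bis} and $v > 0$ on $B_1$ by \autoref{boggio}. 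In the second integral, the strict inequality $\partial_{x_i} G^\alpha(x,y) + \partial_{x_i} G^\alpha(x,y^{i,\lambda}) < 0$ (valid since $\lambda > 0$, \autoref{GGS1bis}) combined with $g(v(y)) \ge \tilde g(v(y^{i,\lambda})) \ge 0$ from \autoref{GGS2} makes the integrand nonpositive and strictly negative on a set of positive measure. Hence $\partial_{x_i} u < 0$ on all of $T_{i,\lambda} \cap B_1$.

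For the second stage I partition $T_{i,l} \cap B_1$ into an interior piece $\{x : dist(x,\partial B_1) \ge \delta\}$ and a boundary tube $\{x : dist(x,\partial B_1) < \delta\}$ for a small $\delta > 0$. On the compact set $\{x \in T_{i,\lambda} : dist(x,\partial B_1) \ge \delta/2\}$ the strict negativity just established yields a uniform bound $\partial_{x_i} u \le -c < 0$; uniform continuity of $\partial_{x_i} u$ on $\overline{B_1}$ then forces $\partial_{x_i} u < 0$ on the interior piece of $T_{i,l} \cap B_1$ for every $l$ close enough to $\lambda$.

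The boundary tube is controlled via the higher-order Hopf lemma (\autoref{hopf}): any $x_0 \in T_{i,l} \cap \partial B_1$ satisfies $\nu(x_0)\cdot e_i = (x_0)_i = l > 0$ (ensured by taking $\gamma < \lambda$), so with $\mu = -e_i$ we have $\mu\cdot x_0 < 0$ and therefore $\partial^\alpha_{-e_i} u(x_0) > 0$. Since the Dirichlet conditions force every partial derivative of $u$ of order less than $\alpha$ to vanish on $\partial B_1$, a Taylor expansion in the inward signed distance $s(x)$ to $\partial B_1$ yields
\[
\partial_{x_i} u(x) = -\alpha\, \nu_i(x_0)\, \phi(x_0)\, s(x)^{\alpha-1} + o\bigl(s(x)^{\alpha-1}\bigr)
\]
as $x \to x_0$ inside $B_1$, where $\phi(x_0) = \tfrac{1}{\alpha!}\partial^\alpha_{-\nu} u(x_0) > 0$ by \autoref{hopf}. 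Compactness of $T_{i,\lambda} \cap \partial B_1$ and continuity of $\phi$ and $\nu_i$ make the leading coefficient uniformly negative, so the boundary tube also satisfies $\partial_{x_i} u < 0$ for $l$ sufficiently close to $\lambda$. Choosing $\gamma$ so that both regimes apply simultaneously gives the claim for $u$, and the analogous argument yields the claim for $v$. The principal technical obstacle is precisely the boundary tube: because $\partial_{x_i} u \equiv 0$ on $\partial B_1$ when $\alpha \ge 2$, its negativity there cannot be extracted by continuity alone, and the higher-order expansion to order $s^{\alpha-1}$ via \autoref{hopf} is essential.
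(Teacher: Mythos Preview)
Your proof is correct and follows essentially the same two-stage strategy as the paper: first obtain $\partial_{x_i} u < 0$ on $T_{i,\lambda} \cap B_1$ via the Green-function representation and the sign properties of \autoref{GGS1bis}, then propagate to nearby slices by combining compactness in the interior with the higher-order Hopf lemma (\autoref{hopf}) near $\partial B_1$. The only cosmetic differences are that you split the integral over $\Sigma_{i,\lambda}$ into $\Sigma^*_{i,\lambda}$ and its complement (avoiding the case distinction the paper draws from \autoref{GGS2}), and that you phrase the boundary step as a Taylor expansion in the signed distance rather than in the $e_i$-direction as the paper does; both formulations rest on the same vanishing of all derivatives of order $\le \alpha-1$ on $\partial B_1$ and the strict sign furnished by \autoref{hopf}.
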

\begin{proof}
For all $x \in T_{i, \lambda} \cap B_1$, by \autoref{GGS1}, 
\begin{multline*} \frac{\partial u}{\partial x_i}(x)= \int_{B_1} \partial_{x_i} G^{\alpha} (x, y) g(v(y)) \, dy\\
=\int_{\Sigma_{i, \lambda}} [\partial_{x_i} G^{\alpha}(x,y)g(v(y)) + \partial_{x_i} G^{\alpha} (x, y^{i, \lambda}) \tilde{g}(v(y^{i, \lambda}))] \, dy \end{multline*}
Note that $y^{i, \lambda}$ may be outside $B_1$, however $g(0) \ne 0$ in general, hence one has to consider $\tilde g$ in place of $g$ in the last integral above.
By \autoref{GGS2} two cases may occur: $g(v(y)) > \tilde{g} (v(y^{i, \lambda}))$ for all $y \in \mathcal{O}^v_{i, \lambda}$ or $\tilde{g}(v(y^{i, \lambda}))>0$ for all $y \in \mathcal{O}^v_{i, \lambda}$ . In the first case,
\[ \frac{\partial u}{\partial x_i}(x) < \int_{\Sigma_{i, \lambda}} (\partial_{x_i} G^{\alpha} (x, y) + \partial_{x_i} G^{\alpha} (x, y^{i, \lambda}) )\tilde{g}(v(y^{i, \lambda})) \, dy \le 0 \quad \text{for all } x \in T_{i, \lambda} \cap B_1. \]
In the second case, 
\[ \frac{\partial u}{\partial x_i}(x) \le \int_{\Sigma_{i, \lambda}} (\partial_{x_i} G^{\alpha} (x, y) + \partial_{x_i} G^{\alpha} (x, y^{i, \lambda}) )\tilde{g}(v(y^{i, \lambda})) \, dy < 0 \quad \text{for all } x \in T_{i, \lambda} \cap B_1. \]
In any case, 
\begin{equation}\label{proof<} \frac{\partial u}{\partial x_i} (x) <0 \quad \text{ for all } x \in T_{i, \lambda} \cap B_1. \end{equation}
We now proceed exactly as in Lemma 7.11 in \cite{GazzolaGrunauSweers10}. 
For any $y \in \mathbb{R}^N$ and any $a>0$ let us consider the cube centered at $y$, namely
\[ \mathcal{U}_{a}(y) = \{ x \in \mathbb{R}^N: \max_{1 \le i \le N} \abs{x_i - y_i}<a \}. \]
Then by \autoref{hopf}, for any $x_0 \in T_{i, \lambda} \cap \partial B_1$ we have
\[ (-1)^{\alpha}\left( \frac{\partial}{\partial x_i} \right)^{\alpha - 1} \frac{\partial u}{\partial x_i}(x_0)=\left(-\frac{\partial }{\partial x_i} \right)^{\alpha} u(x_0) >0. \]
From boundary conditions we also know that $\left(\frac{\partial}{\partial x_i} \right)^k u(x_0)=0$ for all $k=0, \dots, \alpha - 1$, and hence there exists $a>0$ such that 
\[ \frac{\partial u}{\partial x_i}(x) <0 \quad \text{ for all } x \in \mathcal{U}_{a}(x_0) \cap B_1. \]
By compactness of $T_{i, \lambda} \cap \partial B_1$ there exists $\bar a >0$ such that 
\[ \frac{\partial u}{\partial x_i}(x) <0 \quad \text{ for all } x \in A=\bigcup_{x_0 \in T_{i, \lambda} \cap \partial B_1} (\mathcal{U}_{\bar a} (x_0) \cap B_1) \]
Let us set $K=(T_{i, \lambda} \cap B_1) \setminus A$ and for $d >0$ consider $K_d=K-de_i$. In view of \eqref{proof<} and by compactness of $K$, there exists $\delta>0$ such that 
\[ \frac{\partial u}{\partial x_i} <0 \text{ on } K_d \text{ for all } d \in [0, \delta]. \]
Let $\gamma_u=\min \{ \bar a, \delta \}$. Then, $\frac{\partial u}{\partial x_i} <0$ on $T_{i, l} \cap B_1$ for all $l \in (\lambda - \gamma_u, \lambda)$.\\
Analogously, one gets $\gamma_v$ such that $\frac{\partial v}{\partial x_i} <0$ on $T_{i, l} \cap B_1$ for all $l \in (\lambda - \gamma_v, \lambda)$. The conclusion follows by taking $\gamma=\min \{ \gamma_u, \gamma_ v \}$.
\end{proof}

\begin{figure}
\centering
\includegraphics[width=0.7\textwidth]{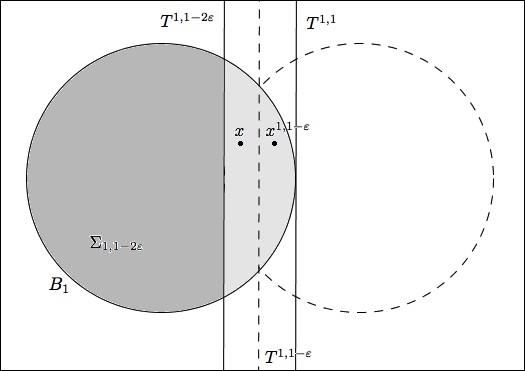}
\caption{Proof of \autoref{GGS4}, $N=2$, $i=1$.}
\label{sliding}
\end{figure}

The following Lemma is the starting point of the moving procedure. 

\begin{lemma}\label{GGS4}
Let $(u, v)\in C_0^{2\alpha}(\bar B_1) \times C_0^{2\beta}(\bar B_1)$ nontrivial solution to \eqref{problemfg}. There exists $\epsilon >0$ such that for all $\lambda \in [1- \epsilon, 1)$ one has
\begin{align}\label{eqnGGS4} 
\begin{split}
u(x)&>u(x^{i, \lambda}) \text{ for } x \in \Sigma_{i, \lambda}, \quad \frac{\partial u}{\partial x_i} <0 \text{ on }T_{i, \lambda} \cap B_1\\
v(x)&>v(x^{i, \lambda}) \text{ for } x \in \Sigma_{i, \lambda}, \quad \frac{\partial v}{\partial x_i} <0 \text{ on }T_{i, \lambda} \cap B_1
\end{split}
\end{align}
\end{lemma}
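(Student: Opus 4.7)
The plan is to combine the Hopf-type boundary lemma (\autoref{hopf}) with the Dirichlet boundary conditions to produce an open set $V \subset \overline{B_1}$ containing the portion of $\partial B_1$ where $x_i$ is close to $1$ on which both $\partial_{x_i} u < 0$ and $\partial_{x_i} v < 0$, and then to read off \eqref{eqnGGS4} for $\lambda$ close to $1$ by combining this with the extension by zero outside $B_1$.

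First I would note that $u, v > 0$ in $B_1$ by \autoref{boggio}, and that the Dirichlet conditions together with $u \equiv 0$ on $\partial B_1$ force every partial derivative of $u$ of order at most $\alpha - 1$ to vanish on $\partial B_1$; in particular $\partial^r_{x_i} u \equiv 0$ on $\partial B_1$ for $r = 0, \dots, \alpha - 1$ (and analogously for $v$ up to order $\beta - 1$). Then for each $x_0 \in \partial B_1$ with $x_{0,i} > 0$, applying \autoref{hopf} with $\mu = -e_i$ yields $(-1)^{\alpha} \partial^{\alpha}_{x_i} u(x_0) > 0$. Given $x \in B_1$ near such an $x_0$, set $x^\star = \bigl(x_1, \dots, \sqrt{1 - \sum_{j \ne i} x_j^2}, \dots, x_N\bigr) \in \partial B_1$ and $s = x_i^\star - x_i > 0$; since all intermediate partial derivatives vanish at $x^\star$, the Taylor expansion of $\partial_{x_i} u$ about $x^\star$ in the $-e_i$ direction reads
\[
\partial_{x_i} u(x) = \frac{(-s)^{\alpha - 1}}{(\alpha - 1)!}\, \partial^{\alpha}_{x_i} u(x^\star) + O(s^{\alpha}),
\]
whose leading coefficient has sign $(-1)^{\alpha - 1}(-1)^{\alpha} = -1$ regardless of the parity of $\alpha$. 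Uniform continuity of $\partial^{\alpha}_{x_i} u$ on the compact cap $K = \{x_0 \in \partial B_1 : x_{0,i} \ge 1/2\}$ therefore yields some $s_0 > 0$ and an open neighborhood $V_u$ of $K$ in $\overline{B_1}$ on which $\partial_{x_i} u < 0$. The same construction produces $V_v$ for $v$; I would then set $V = V_u \cap V_v$.

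Next I would pick $\epsilon \in (0, 1/4)$ small enough that $\{y \in \overline{B_1} : y_i \ge 1 - 2\epsilon\} \subset V$, and fix $\lambda \in [1 - \epsilon, 1)$. Then $T_{i,\lambda} \cap B_1 \subset V$, which immediately yields the two derivative inequalities in \eqref{eqnGGS4}. For the reflection inequality on $\Sigma_{i,\lambda}$ I would split into two cases. If $x^{i,\lambda} \notin B_1$, the extension by zero gives $u(x^{i,\lambda}) = 0 < u(x)$. If instead $x^{i,\lambda} \in B_1$, its $i$-th coordinate $2\lambda - x_i$ satisfies $|2\lambda - x_i| \le |x^{i,\lambda}| \le 1$, so $x_i \ge 2\lambda - 1 \ge 1 - 2\epsilon$; hence both $x$ and $x^{i,\lambda}$, together with the vertical segment joining them, lie in $V$, and integrating $\partial_{x_i} u < 0$ along that segment produces $u(x) > u(x^{i,\lambda})$. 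The argument for $v$ is identical.

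The hard part is really the construction of $V$: one needs the top-order positivity coming from \autoref{hopf}, the vanishing of every lower-order term in the Taylor expansion coming from the Dirichlet conditions, and uniform control on the compact cap $K$ to patch these into a common neighborhood. Once $V$ is fixed the two-case analysis for the reflection is short, because the delicate case, where the reflected point stays inside $B_1$, is automatically confined to a thin spherical cap already contained in $V$.
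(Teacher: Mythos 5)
Your proof is correct, and the underlying ideas coincide with the paper's: use \autoref{hopf} together with the vanishing of all $x_i$-derivatives of order $\le \alpha-1$ coming from the Dirichlet conditions, Taylor-expand $\partial_{x_i} u$ at the boundary in the $-e_i$ direction to force its sign, pass to a uniform slab by compactness, and then split into the cases $x^{i,\lambda}\notin B_1$ (zero extension) and $x^{i,\lambda}\in B_1$ (integrate the negative derivative along the segment). The only organizational difference is that you build the negativity slab from scratch, whereas the paper's two-line proof of \autoref{GGS4} simply invokes \autoref{GGS3} with $\lambda=1$: since $\Sigma_{i,1}=B_1$ and every reflected point lies outside the ball, the hypotheses of \autoref{GGS3} hold trivially, and its conclusion already delivers $\partial_{x_i}u<0$, $\partial_{x_i}v<0$ on $T_{i,l}\cap B_1$ for $l$ near $1$. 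When \autoref{GGS3} is unwound at $\lambda=1$, its Green-function half is vacuous (since $T_{i,1}\cap B_1=\emptyset$) and what remains is exactly your Hopf-plus-Taylor argument, so the two proofs have the same substance. Invoking \autoref{GGS3} is tidier because that lemma is needed anyway for the sliding step, while your direct version makes the role of the boundary conditions and of \autoref{hopf} more transparent at the cost of some repetition.
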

\begin{proof}
Since $x^{i,1} \in B_1^c$ for any $x \in \Sigma_{i,1}=B_1$, by \autoref{GGS3} there exists $\epsilon$ such that $ \frac{\partial u}{\partial x_i}<0$, $ \frac{\partial v}{\partial x_i}<0$ on $T_{i, l} \cap B_1$ for all $l \in (1-2\varepsilon, 1)$. 

Hence, for all $\lambda \in [1- \epsilon, 1)$ one has 
\[ u(x)>u(x^{i, \lambda}), v(x)>v(x^{i, \lambda}) \text{ for } x \in \Sigma_{i, \lambda}. \] 
Indeed, if $x_i \le 1-2\epsilon$ then $x^{i, \lambda} \in B_1^c$ and since $u>0$ in $B_1$ and $=0$ outside $B_1$, $u(x)>u(x^{i, \lambda})$. If $1-2\epsilon < x_i <1$, then two cases may occur: 
\begin{itemize}
\item $x^{i, \lambda} \in B_1^c$ and the conclusion follows as above,
\item $x^{i, \lambda} \in B_1$: in this case, it is enough to exploit the fact that  $\frac{\partial u}{\partial x_i}(x) <0$ for all $x \in T^{i, \lambda} \cap B_1$, with $\lambda \in (1-2\varepsilon, 1)$. 
\end{itemize}
Similarly for $v$. Therefore, \eqref{eqnGGS4} holds for all $\lambda \in [1-\epsilon, 1)$ and the proof is complete.
\end{proof}
 
We are now ready to slide the hyperplane to the critical position $\lambda=0$. 
\begin{proposition}\label{lemmovingplanes}
If $(u,v) \in C_0^{2\alpha}(\bar B_1) \times C_0^{2\beta}(\bar B_1)$ nontrivial solution to \eqref{problemfg} we have
\begin{multline}\label{eqmoving} 
\Lambda= \{ \lambda \in (0,1): u(x) > u(x^{i, \lambda}), v(x) > v(x^{i, \lambda}) \, \forall \, x \in \Sigma_{i, \lambda} \\
\frac{\partial u}{\partial x_i} <0, \frac{\partial u}{\partial x_i} <0 \text{ on }T_{i, \lambda} \cap B_1 \} = (0, 1). 
\end{multline}
\end{proposition}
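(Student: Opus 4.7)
The plan is to run the moving planes argument to its natural stopping point: starting from the interval near $\lambda=1$ produced by \autoref{GGS4}, I would push the inequalities \eqref{eqmoving} down to $\lambda=0$ by a connectedness/contradiction argument. Concretely, set $\lambda^{*}=\inf \Lambda$; \autoref{GGS4} guarantees $\Lambda \supset [1-\epsilon, 1)$, so $\lambda^{*}<1$ is well defined, and the goal is to show $\lambda^{*}=0$.

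Suppose for contradiction that $\lambda^{*}>0$. Pick a sequence $\lambda_n \in \Lambda$ with $\lambda_n \downarrow \lambda^{*}$. Passing to the limit in the pointwise inequalities $u(x)>u(x^{i,\lambda_n})$ and $v(x)>v(x^{i,\lambda_n})$ for $x \in \Sigma_{i,\lambda_n}$, the continuity of $u$ and $v$ yields the weak versions $u(x)\ge u(x^{i,\lambda^{*}})$ and $v(x)\ge v(x^{i,\lambda^{*}})$ on $\Sigma_{i,\lambda^{*}}$. The crucial step is to upgrade these to strict inequalities; I expect this to be the main technical obstacle.

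For the upgrade, I would use the Green function representation from \autoref{GGS1}. Splitting the integral over $B_1$ into the part on $\Sigma_{i,\lambda^{*}}$ and its reflection (with $u,v$ extended by $0$ outside $B_1$ so that $\tilde g, \tilde f$ handle points whose reflection falls outside), one arrives at
\[ u(x)-u(x^{i,\lambda^{*}}) = \int_{\Sigma_{i,\lambda^{*}}} \bigl[ G^{\alpha}(x,y)-G^{\alpha}(x^{i,\lambda^{*}},y^{i,\lambda^{*}}) \bigr]\,g(v(y))\,dy - \int_{\Sigma_{i,\lambda^{*}}} \bigl[ G^{\alpha}(x^{i,\lambda^{*}},y)-G^{\alpha}(x,y^{i,\lambda^{*}}) \bigr]\,\tilde g(v(y^{i,\lambda^{*}}))\,dy, \]
after using the symmetry of $G^{\alpha}$ and the weak reflection inequality $g(v(y))\ge \tilde g(v(y^{i,\lambda^{*}}))$ from \autoref{GGS2}. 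By the first inequality of \autoref{GGS1bis} the bracketed kernel in the first integral is strictly positive, and by the second inequality it strictly dominates the bracketed kernel in the second integral in absolute value. Combining this with the nontriviality statement in \autoref{GGS2} (there is an open set where either the inequality for $g$ is strict or $\tilde g(v(y^{i,\lambda^{*}}))>0$), one concludes $u(x)>u(x^{i,\lambda^{*}})$ strictly on $\Sigma_{i,\lambda^{*}}$, and analogously $v(x)>v(x^{i,\lambda^{*}})$.

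With these strict inequalities in hand, \autoref{GGS3} produces $\gamma>0$ such that $\partial_{x_i} u, \partial_{x_i} v <0$ on $T_{i,l}\cap B_1$ for all $l \in (\lambda^{*}-\gamma,\lambda^{*})$. Repeating the propagation argument used in the proof of \autoref{GGS4} (exploiting the extension of $u,v$ by $0$ outside $B_1$ for points whose reflection leaves the ball, and the negativity of $\partial_{x_i} u, \partial_{x_i} v$ on the sliced hyperplanes for those whose reflection stays inside) then yields \eqref{eqnGGS4} for every $\lambda$ in $(\lambda^{*}-\gamma,\lambda^{*})$. This contradicts the minimality of $\lambda^{*}$, so $\lambda^{*}=0$ and $\Lambda=(0,1)$. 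The whole proof is essentially an adaptation of the moving planes scheme for a single higher-order equation in \cite{GazzolaGrunauSweers10} to the present coupled system, the only genuinely new point being that the maximum principle step must be performed simultaneously on both components via the weak coupling inequalities of \autoref{GGS2}.
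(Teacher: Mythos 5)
Your proposal follows the same overall scheme as the paper — define the critical level $\lambda^*=\inf\Lambda$, assume $\lambda^*>0$, upgrade the weak reflection inequalities at $\lambda^*$ to strict ones via the Green function decomposition and \autoref{GGS1bis}/\autoref{GGS2}, invoke \autoref{GGS3} to get the derivative sign below $\lambda^*$, and derive a contradiction — and the Green-function step, which you correctly flag as the crux, is handled the same way as in the paper. However, there is a genuine gap in the final propagation step.

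You say that, once \autoref{GGS3} gives $\partial_{x_i}u,\partial_{x_i}v<0$ on $T_{i,l}\cap B_1$ for $l\in(\lambda^*-\gamma,\lambda^*)$, one may ``repeat the propagation argument used in the proof of \autoref{GGS4},'' namely: for $x\in\Sigma_{i,\lambda}$ whose reflection $x^{i,\lambda}$ leaves $B_1$ use the zero extension, and for $x$ whose reflection stays inside $B_1$ use the negativity of the normal derivative on the nearby hyperplanes. This dichotomy is exhaustive in \autoref{GGS4} only because $\lambda$ is close to $1$: there, whenever $x_i\le 1-2\epsilon$, the reflected point automatically escapes $B_1$, so the only points whose reflection stays inside lie in a thin slab next to $T_{i,\lambda}$, where the derivative information applies. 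For an interior critical level $\lambda^*<1$ this is no longer true. There are points $x\in\Sigma_{i,\lambda}$ with $x_i$ well below $\lambda^*-\gamma$ whose reflection $x^{i,\lambda}$ still lies in $B_1$ (e.g.\ $x$ near the origin for $\lambda^*$ around $1/2$); for such $x$ the segment from $x$ to $x^{i,\lambda}$ leaves the slab $\{\lambda^*-\gamma<x_i<\lambda^*\}$ where \autoref{GGS3} controls the derivative, and the zero-extension argument does not apply either. So neither of your two cases covers them. The paper closes this case by a different device: it first proves the strict inequalities $u(x)>u(x^{i,\bar\lambda})$, $v(x)>v(x^{i,\bar\lambda})$ on all of $\Sigma_{i,\bar\lambda}$ at the critical level, and then uses continuity of $l\mapsto u(x^{i,l})$ together with compactness of $\{x\in\bar B_1:x_i\le\bar\lambda-\gamma_1\}$ to show the strict inequality persists for $l$ slightly below $\bar\lambda$ uniformly on that compact set; the slab near the hyperplane is then handled by the derivative information from \autoref{GGS3}, and only then does one conclude $(\bar\lambda-\gamma,\bar\lambda)\subset\Lambda$. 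You need this continuity-plus-compactness step (or an equivalent argument) to finish; literally transplanting the \autoref{GGS4} case analysis is not sufficient.
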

\begin{proof}
By \autoref{GGS4} it turns out that $[ 1 - \epsilon, 1) \subset \Lambda$. Let $\bar{\lambda}$ be the smallest number such that $(\bar{\lambda}, 1) \subset \Lambda$. The proof will be complete once we show that $\bar{\lambda}=0$. By continuity one has 
\[ u(x) \ge u(x^{i, \bar{\lambda}}), v(x) \ge v(x^{i, \bar{\lambda}}) \text{ for all } x \in \Sigma_{i, \bar{\lambda}}. \]
By contradiction assume $\bar{\lambda}>0$. 
Take $x \in \Sigma_{i, \bar{\lambda}}$. Then
\begin{align*}
u(x) - u(x^{i, \bar{\lambda}})&=\int_{B_1} (G^{\alpha}(x, y)- G^{\alpha}(x^{i, \bar{\lambda}}, y))g(v(y)) \, dy \\
&= \int_{\Sigma_{i, \bar{\lambda}}} (G^{\alpha}(x, y)-G^{\alpha}(x^{i, \bar{\lambda}}, y)) g(v(y)) \, dy \\
&\quad+ \int_{\Sigma_{i, \bar{\lambda}}} (G^{\alpha}(x, y^{i, \bar{\lambda}})-G^{\alpha}(x^{i, \bar{\lambda}}, y^{i, \bar{\lambda}})) \tilde{g}(v(y^{i, \bar{\lambda}})) \, dy.
\end{align*}
By \autoref{GGS2}, two cases may occur: $g(v(y)) > \tilde{g}(v(y^{i, \bar{\lambda}}))$ for all $y \in \mathcal{O}^v_{i, \bar{\lambda}}$ or $\tilde{g}(v(y^{i, \bar{\lambda}}))>0$ for all $y \in \mathcal{O}^v_{i, \bar{\lambda}}$. In the first case, by \autoref{GGS1bis}
\begin{multline*} u(x) - u(x^{i, \bar{\lambda}}) > \int_{\Sigma_{i, \bar{\lambda}}} [G^{\alpha}(x, y) - G^{\alpha}(x^{i, \bar{\lambda}}, y) + \\ G^{\alpha}(x, y^{i, \bar{\lambda}}) - G^{\alpha}(x^{i, \bar{\lambda}}, y^{i, \bar{\lambda}})] \tilde{g}(v(y^{i, \bar{\lambda}})) \, dy \ge 0 \end{multline*}
whereas in the second case
\begin{multline*} u(x) - u(x^{i, \bar{\lambda}}) \ge \int_{\Sigma_{i, \bar{\lambda}}} [G^{\alpha}(x, y) - G^{\alpha}(x^{i, \bar{\lambda}}, y) \\+ G^{\alpha}(x, y^{i, \bar{\lambda}}) - G^{\alpha}(x^{i, \bar{\lambda}}, y^{i, \bar{\lambda}})]  \tilde{g}(v(y^{i, \bar{\lambda}})) \, dy > 0. \end{multline*}
Similar considerations hold for $v$. 
Hence 
\begin{equation}\label{terzopezzo} u(x) > u(x^{i, \bar{\lambda}}), v(x) > v(x^{i, \bar{\lambda}}) \text{ for all } x \in \Sigma_{i, \bar{\lambda}}. \end{equation} 
Due to \autoref{GGS3} there exists $\gamma_1$ such that 
\begin{equation}\label{primopezzo} \frac{\partial u}{\partial x_i} <0, \frac{\partial v}{\partial x_i} <0  \text{ on } T_{i, l} \cap B_1 \text{ for all }l \in (\bar{\lambda} - 2\gamma_1, \bar{\lambda}).\end{equation}
Now, by continuity, for any $x \in \bar{B}_1$ such that $x_i \le \bar{\lambda} - \gamma_1$ there exists $\gamma(x) >0$ such that
\[ u(x) \ge u(x^{i,l}),\, v(x) \ge v(x^{i, l}), \, l \in (\bar{\lambda}- \gamma(x), \bar{\lambda}] \]
and by compactness of $C= \{ x \in \bar{B}_1: x_i \le \bar{\lambda} - \gamma_1 \}$ one can take 
\[ \gamma=\min \{ \inf_C \gamma(x), \gamma_1 \}=\min \{ \min_C \gamma(x), \gamma_1 \}>0, \]
hence  for all $x \in \Sigma_{i, \bar{\lambda} - \gamma_1}$, 
\[ u(x) \ge u(x^{i,l}), \, v(x) \ge v(x^{i, l}), \, l \in (\bar{\lambda}- \gamma, \bar{\lambda}] \]
and exploiting the same argument as above, if $x \in \Sigma_{i, \bar{\lambda} - \gamma_1}$
 \begin{equation}\label{secondopezzo} u(x) > u(x^{i, l}),\, v(x) > v(x^{i,l}), \text{ for all } l \in (\bar{\lambda}- \gamma, \bar{\lambda}].\end{equation}
The conclusion follows in view of \eqref{terzopezzo}, \eqref{primopezzo} and \eqref{secondopezzo}.
\end{proof}

\begin{proof}[Proof of \autoref{propestimates}]

Follows by \autoref{lemmovingplanes}:  if $(u, v)$ is a solution to \eqref{problemfg}, then \eqref{eqmoving} holds true and since  \eqref{problemfg} is invariant by rotation and the domain is radially symmetric, this implies that $u, v$ are radially symmetric and $u'(r), v'(r) <0$. 
\end{proof}

\section{Proof of the main results}
\begin{proof}[Proof of \autoref{teorema2}]
The proof of \autoref{teorema2} is a simple combination of the previous steps.
Let us assume that 
\[ \mathcal{C} \cap (\{0\}\times C_0^{2\alpha}(\bar{B}_1) \times C_0^{2\beta}(\bar{B}_1)) = \{(0,0,0)\} \]
where $\mathcal{C}$ is defined as in \autoref{sequenzenonlimitate}.
Hence by \autoref{sequenzenonlimitate} we can find an unbounded sequence $(t_n, u_n, v_n)$  of solutions to \eqref{eqnt}. Note that $t_n >0$ and as a consequence $u_n >0$ and $v_n>0$ by \autoref{boggio}. However, for any fixed $n$, in view of 
\autoref{propestimates} with $f(u)=(t_n+\abs{u})^p$ and $g(v)=(t_n^{\vartheta}+\abs{v})^q$, we have that $u_n, v_n$ are radially symmetric and the global maxima are attained at $0$. Therefore, by \autoref{lemblowup} one concludes that there exists a nontrivial radial solution to \eqref{sysnonexistence}. 
\end{proof} 
\noindent It is now enough to combine \autoref{teorema2}, \autoref{GZL} and \autoref{GLZ2} to get a contradiction and hence to conclude the proof of \autoref{teorema}. As for \autoref{teoremabis}, one exploits \autoref{teorema2} and \autoref{nethradial}.

\begin{proof}[Proof of \autoref{coruniq}]

The proof relies on an idea originally due to Gidas, Ni and Nirenberg \cite{GidasNiNirenberg79}, see also \cite{CuiWangShi07, Dalmasso04, Dalmasso95}.
Let $(u, v)$ be a nontrivial solution to \eqref{21}. We know that $u, v$ are positive, radially symmetric and strictly decreasing. In particular, since the maximum is attained at $0$, we have $u'(0)=0$ and $v'(0)=0$. 
Moreover, 
\begin{equation}\label{(Delta u)'} 
r^{N-1}(\Delta u)'(r)=\int_0^r s^{N-1} (\Delta^2 u)(s) \, ds .  
\end{equation}
Indeed, this is straightforward by differentiating both sides of \eqref{(Delta u)'}.
As a consequence,
\begin{equation}\label{ce} 
(\Delta u)'(0)=\lim_{r \to 0} \frac{\int_0^r s^{N-1} (\Delta^2 u)(s) \, ds}{r^{N-1}}=0.
\end{equation}
Let $(w, z)$ be another nontrivial solution to \eqref{21} and let us set  
\[ \tilde{w}(r)=\lambda^s w(\lambda r) \text{ and } \tilde{z}(r)=\lambda^t z(\lambda r), \]
 where $s, t$ are chosen such that $(\tilde{w}, \tilde{z})$ satisfies
 \[ \begin{cases}
 \Delta^{2} \tilde{w}(r)=\tilde{z}^q(r) \, &0 \le r \le 1/\lambda \\
 -\Delta \tilde{z}(r)=\tilde{w}^p(r) \, &0 \le r \le 1/\lambda \\
\tilde{w}(1/\lambda)= \tilde{w}'(1/\lambda)=\tilde{z}(1/\lambda)=0, 
\end{cases} \]
namely $t=\frac{2+4p}{pq-1}$ and $s=\frac{2q+4}{pq-1}$, 
whereas $\lambda >0$ be such that 
 \begin{equation}\label{lambda}
\tilde{w}(0)= u(0).
 \end{equation}

\noindent \textbf{Claim:} 
\begin{equation}\label{claim}
\tilde{z}(0)=v(0) , \, \Delta \tilde w(0)=\Delta u(0). 
\end{equation}  

\noindent Let us suppose by contradiction for instance $(v- \tilde z )(0)>0$ and $\Delta( u- \tilde w)(0) >0$. Notice that by continuity $v-\tilde z >0$ on $[0, \delta)$ and $\Delta( u- \tilde w) >0$ on $[0, \varepsilon)$ for some $\delta, \varepsilon$ sufficiently small. Moreover $u- \tilde w >0$ on $(0, \varepsilon]$: indeed, if there exists $a \le \varepsilon$ such that $u(a)-\tilde w(a) \le 0$, then $\Delta(u-\tilde w)>0$ implies $u- \tilde w <0$ on $[0, a)$, which is a contradiction. 

\noindent Hence we can choose $R_1$ such that 
\begin{multline*} R_1= \sup \{ r \le \min \{1, 1/\lambda \} : \, (u- \tilde w)(s) >0, \, (v- \tilde z)(s) >0, \\
 \,( \Delta(u-\tilde w))(s) >0, \, s \in (0, r) \} .\end{multline*}
We have 
\begin{equation}\label{R_1}
(u- \tilde w)(R_1) >0, \, \Delta(u-\tilde w)(R_1)>0.
\end{equation}
Indeed, let us assume by contradiction that $(u-\tilde w)(R_1)=0$. Then, since $\Delta(u-\tilde w) >0$ on $[0, R_1)$ we have by the maximum principle $u-\tilde w <0$ on $[0, R_1)$. 
Analogously, if $\Delta(u-\tilde w)(R_1)=0$, then we have $\Delta^2(u-\tilde w)=v^q- \tilde z^q>0$ on $[0, R_1)$, thus $\Delta(u-\tilde w) <0$ on $(0, R_1)$. 
As a consequence, \eqref{R_1} holds. We distinguish two cases: $R_1<\min \{1, 1/\lambda \}$, and in this case $(v- \tilde z)(R_1)=0$, or $R_1=\min \{1, 1/\lambda \}$. 

\noindent In the first case, by applying the maximum principle to $-\Delta(v-\tilde z) =u^p- \tilde w^p>0$, one has $v-\tilde z <0$ on $(R_1, R_1+ \delta)$ for $\delta$ sufficiently small. 
We can set $R_2$ such that 
\begin{multline*} R_2= \sup \{ r \le \min \{1, 1/\lambda \} : \, (u- \tilde w)(s) >0, \, (v- \tilde z)(s) <0, \,  \\
\Delta(u-\tilde w)(s) >0, \, s \in (R_1, r) \} .\end{multline*}
As above, we have 
\[ (v- \tilde z)(R_2) <0, \, (u-\tilde w)(R_2)>0 \]
and moreover $R_2<\min \{1, 1/\lambda \}$, which implies $\Delta(u-\tilde w)(R_2)=0$, or $R_2=\min \{1, 1/\lambda \}$. Indeed, if $(v-\tilde z)(R_2)=0$, then by applying the maximum principle to $-\Delta(v-\tilde z) =u^p- \tilde w^p>0$ on $B_{R_2} \setminus \overline{B_{R_1}}$ we have $v- \tilde z >0$ on $(R_1, R_2)$; on the other hand, if $(u- \tilde w)(R_2)=0$, then $u- \tilde w <0$ on $[0, R_2)$, as $\Delta(u- \tilde w)>0$.  

\noindent We now apply iteratively the same reasoning as above to get a sequence (which can be finite or infinite) 
\[ 0 =R_0 < R_1 < R_2 < \dots \le \min \{1, 1/\lambda \} \]
such that
\[ u(R_{3k})=\tilde w(R_{3k}), \, v(R_{3k+1})=\tilde z(R_{3k+1}), \, \Delta u(R_{3k+2})=\Delta \tilde w(R_{3k+2}), \quad k \ge 0, \]
as long as $R_k < \min \{1, 1/\lambda \} $, see \autoref{table}.

\begin{table}\centering\caption{Sign of $u- \tilde w$, $v- \tilde z$, $\Delta(u- \tilde w)$. }
\label{table}\begin{tabular}{r|cccc}
&$(u-\tilde w)(s)$&$(v-\tilde z)(s)$&$\Delta(u- \tilde w)(s)$\\ \hline
$s=0$&=0&>0&>0\\ \hline
$s \in (0, R_1)$&>0&>0&>0\\ \hline
$s=R_1$&>0&=0&>0\\ \hline
$s \in (R_1, R_2)$&>0&<0&>0\\ \hline
$s=R_2$&>0&<0&=0\\ \hline
$s \in (R_2, R_3)$&>0&<0&<0\\ \hline
$s=R_3$&=0&<0&<0\\ \hline
$s \in (R_3, R_4)$&<0&<0&<0\\ \hline
$\vdots$ &$\vdots$ &$\vdots$ &$\vdots$ 
\end{tabular}
\end{table}

\noindent If $\{R_j\}$ is infinite, then we take the limit $R_*=\lim_{i \to \infty} R_i \le  \min \{ 1, 1/\lambda \}$ and by continuity and differentiability, it holds
\[ (u- \tilde w )(R_*)=0, \, (v- \tilde z)(R_*)=0, \, \Delta(u- \tilde w)(R_*)=0 \]
and
\[ (u'- \tilde w' )(R_*)=0, \, (v'- \tilde z')(R_*)=0, \, (\Delta(u- \tilde w))'(R_*)=0. \]
Next define
 \[ U(r)=(u(r), -\Delta u(r), v(r)) \quad 0 \le r \le 1 \]
 and
\[  W(r)=(\tilde{w}(r), -\Delta \tilde{w} (r), \tilde{z}(r)) \quad 0 \le r \le 1/\lambda. \]
For any $0 \le r \le R_*$ one has 
 \begin{equation}\label{UW_1} U(r)- W(r)=\int_{r}^{R_*} \frac{s}{N-2} \left( 1 - \left(\frac{s}{r} \right)^{N-2} \right)  (F(U(s))- F(W(s))) \, ds \end{equation}
where we set $F(x,y, z)=(y, z^q, x^p)$.
Since $p, q >1$, then $F$ is locally Lipschitz continuous, hence by the Gronwall Lemma, \eqref{UW_1} implies $U=W$ on $[0, R_*]$. 
This is in contradiction with the assumption $(v- \tilde z)(0)>0$. 

\noindent On the other hand, if the sequence stops at a maximum value $R_k$ then on $(R_{k-1}, R_k=\min \{1, 1/\lambda \} ]$ at least one of the following is verified: 
\begin{itemize}
\item $u- \tilde w$ and $v- \tilde z$ have opposite sign 
\item $u- \tilde w$ and $\Delta(u- \tilde w)$ have the same sign. 
\end{itemize}
Let for instance $u- \tilde w >0$ and $v- \tilde z \le 0$. Then

 \[ 0 \le (\tilde{z}-v)(\min (1, 1/\lambda))=
 \begin{cases}
-v(1/\lambda) & \text{ if } \lambda >1 \\
 0 & \text{ if } \lambda =1 \\
 \tilde{z}(1) & \text{ if } \lambda <1
 \end{cases} \]
and thus $\lambda \le 1$. On the other hand, 
 \[ 0> (\tilde{w}-u)(\min (1, 1/\lambda))=
 \begin{cases}
 -u(1/\lambda) & \text{ if } \lambda >1 \\
 0 & \text{ if } \lambda =1 \\
 \tilde{w}(1) & \text{ if } \lambda <1
 \end{cases} \]
thus we have $\lambda > 1$, which is a contradiction. 

\noindent Let now $u- \tilde w >0$ and $\Delta(u- \tilde{w}) \ge 0 $. Then, $(u - \tilde w)(\min \{1, 1/\lambda \}) > 0$ implies $\lambda >1$, whereas by the Hopf lemma $0< (u'- \tilde w' )(\min \{1, 1/\lambda \})=(u' - \tilde w')(1/\lambda)=u'(1/\lambda)<0$, a contradiction.

\noindent Therefore, since we have a contradiction in any case, we cannot have $(v- \tilde z )(0)>0$ and $\Delta( u- \tilde w)(0) >0$. In a similar fashion, one proves that also the other possible choices for the sign of $(v- \tilde z )(0)$ and $\Delta( u- \tilde w)(0)$ yield a contradiction, hence \eqref{claim} holds and the claim is proved. 

\noindent Finally, in view of \eqref{lambda} and \eqref{claim}, and since by \eqref{ce} $u'(0)=v'(0)=\tilde w'(0)= \tilde z'(0)=(\Delta u)'(0)=(\Delta \tilde w)'(0)=0$, for any $r \le \min \{ 1, 1/\lambda \}$
one has
 \begin{equation}\label{UW} U(r)- W(r)=\int_0^r \frac{s}{N-2} \left( 1 - \left(\frac{s}{r} \right)^{N-2} \right)  (F(W(s))- F(U(s))) \, ds \end{equation}
 where $F(x,y, z)=(y, z^q, x^p)$.
Since $p, q >1$, then $F$ is locally Lipschitz continuous, hence by the Gronwall Lemma, \eqref{UW} implies $U=W$ on $[0, \min \{1, 1/\lambda\}]$. 

\noindent Since $0<u(1/\lambda)=\tilde w(1/\lambda)=0$ if $\lambda >1 $, whereas $0=u(1)=\tilde w(1)>0$ if $\lambda <1 $, then $\lambda=1$ and hence $(u, v)=(w, z)$. 
\end{proof}

\appendix

\section{Appendix}
In what follows, we give a detailed proof of \autoref{GZL}. We refer to \cite{MitidieriPohozaev01} for the original proof for more generic higher order operators, see also \cite{CaristiDAmbrosioMitidieri08} where similar estimates are exploited to get representation formulas. In \cite{LiuGuoZhang06} the same result is proved for classical solutions.
Let us show the following preliminary lemma. 
\begin{lemma}\label{lemma}
Let $\rho=\abs{x}$, $s \ge 1$, $R >0$ and $h \colon \mathbb{R} \to \mathbb{R}$. Then 
\begin{equation} \label{Delta} \Delta^s h(\rho/R) = \sum_{i=1}^{2s} c_i \frac{h^{(i)}(\rho/R)}{R^i \rho^{2s-i}}
\end{equation}
for suitable coefficients $c_i \in \mathbb{R}$ not depending on $R$.
\end{lemma}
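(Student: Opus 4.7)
The plan is to proceed by induction on $s$, using the explicit formula for the Laplacian of a radial function, namely $\Delta \phi(\rho) = \phi''(\rho) + \frac{N-1}{\rho}\phi'(\rho)$. The function $x \mapsto h(|x|/R)$ is radial, so every iterate $\Delta^s h(\rho/R)$ is again radial, and the whole argument reduces to a one-dimensional calculation in the variable $\rho$.

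For the base case $s=1$, direct differentiation of $f(\rho):=h(\rho/R)$ yields $f'(\rho)=R^{-1}h'(\rho/R)$ and $f''(\rho)=R^{-2}h''(\rho/R)$, so
\[
\Delta h(\rho/R)=\frac{h''(\rho/R)}{R^{2}}+(N-1)\frac{h'(\rho/R)}{R\rho},
\]
which is of the form \eqref{Delta} with $c_2=1$, $c_1=N-1$ and no dependence on $R$.

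For the inductive step, assume $\Delta^{s} h(\rho/R)=\sum_{i=1}^{2s} c_i\, h^{(i)}(\rho/R) R^{-i}\rho^{i-2s}$ and apply $\Delta$ termwise. A single term of the form $\phi(\rho)=\rho^{-k}\,h^{(i)}(\rho/R)$ with $k=2s-i$ satisfies, by the radial Laplacian formula,
\[
\Delta\phi=[k(k+1)-(N-1)k]\,\frac{h^{(i)}(\rho/R)}{\rho^{k+2}}+[N-1-2k]\,\frac{h^{(i+1)}(\rho/R)}{R\,\rho^{k+1}}+\frac{h^{(i+2)}(\rho/R)}{R^{2}\rho^{k}},
\]
which, after multiplying by $R^{-i}$, becomes a linear combination (with coefficients depending only on $N,s,i$, not on $R$) of the three terms $h^{(j)}(\rho/R) R^{-j}\rho^{j-2(s+1)}$ for $j\in\{i,i+1,i+2\}$. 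Summing over $i=1,\dots,2s$ and relabelling, $\Delta^{s+1}h(\rho/R)$ is exhibited as a sum of terms $\tilde c_j\, h^{(j)}(\rho/R) R^{-j}\rho^{j-2(s+1)}$ with $j$ running from $1$ to $2(s+1)$ and $\tilde c_j$ independent of $R$, closing the induction.

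The only real bookkeeping issue is checking that the extreme indices $j=1$ and $j=2(s+1)$ arise solely from $i=1$ (lowest derivative term) and $i=2s$ (top derivative term) respectively, and that no index outside the range $1\le j\le 2(s+1)$ is produced; this is immediate from the shift $j\in\{i,i+1,i+2\}$ applied to $i\in\{1,\dots,2s\}$. I expect this index-tracking to be the main, though minor, obstacle; the independence of the $c_i$'s from $R$ is transparent from the computation since the only $R$-dependence in $\Delta\phi$ comes from differentiating $h^{(i)}(\rho/R)$, each such differentiation producing exactly one factor $R^{-1}$ matching the corresponding increase of the derivative order.
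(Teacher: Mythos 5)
Your proof is correct and follows essentially the same strategy as the paper: induction on $s$ using the explicit radial Laplacian formula $\Delta\phi(\rho)=\phi''(\rho)+\frac{N-1}{\rho}\phi'(\rho)$, with the inductive step obtained by differentiating and re-indexing so that the order of $h$-derivatives and the powers of $R$ and $\rho$ shift in lockstep. The only organizational difference is that the paper introduces an auxiliary one-variable function $f(t)=\sum_i c_i h^{(i)}(t)/(R^{2s}t^{2s-i})$ and computes $f'$, $f''$ once before substituting $t=\rho/R$, whereas you apply $\Delta$ term by term to $\rho^{-k}h^{(i)}(\rho/R)$; both computations are identical in substance, and your explicit tracking of the index shift $j\in\{i,i+1,i+2\}$ and of the single factor $R^{-1}$ produced per extra derivative is exactly the bookkeeping the paper streamlines by relabelling coefficients.
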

\begin{proof}
Note that 
\[ \Delta h(\rho/R)=\frac{h'' (\rho/R)}{R^2} + \frac{h'(\rho/R)}{\rho R} (N-1). \]
Let us assume by induction that \eqref{Delta} holds. Let us call 
\[ f(t)= \sum_{i=1}^{2s} c_i \frac{h^{(i)} (t)}{R^{2s} t^{2s-i}}. \]
One has 
\begin{multline*} f'(t) = \sum_{i=1}^{2s} c_i \frac{h^{(i)}(t)}{R^{2s} t^{2s-i+1}} + \sum_{i=1}^{2s} c_i \frac{h^{(i+1)}(t)}{t^{2s-i}R^{2s}} \\
= \sum_{i=1}^{2s} c_i \frac{h^{(i)}(t)}{R^{2s} t^{2s-i+1}} +\sum_{i=2}^{2s+1} c_{i-1} \frac{h^{(i)}(t)}{t^{2s+1-i} R^{2s}} =
\sum_{i=1}^{2s+1} c_i \frac{h^{(i)}(t)}{R^{2s} t^{2s-i+1}} \end{multline*}
where possibly different coefficients are always denoted by $c_i$.
Similarly,
\[ f''(t) = \sum_{i=1}^{2s + 2} c_i \frac{h^{(i)}(t)}{R^{2s} t^{2s+2-i}} \]
and therefore 
\[ f'(\rho/R)=\sum_{i=1}^{2s+1} c_i \frac{h^{(i)}(\rho/R)}{R^{i-1} \rho^{2s-i+1}} \quad f''(\rho/R) = \sum_{i=1}^{2s + 2} c_i \frac{h^{(i)}(\rho/R)}{R^{i-2} \rho^{2s+2-i}}. \]
As a consequence, 
\begin{multline*}
\Delta^{s+1} h(\rho/R)=\Delta f (\rho/R) =\frac{f'' (\rho/R)}{R^2} + \frac{f'(\rho/R)}{\rho R} (N-1)
\\  =\sum_{i=1}^{2s+2} c_i \frac{h^{(i)}(\rho/R)}{R^i \rho^{2s+2-i}}. \qedhere \end{multline*}
\end{proof}
\begin{remark}\label{rmkh}
Let us choose $h(s)= \psi^{\gamma}(s)$ where $\psi$ is a smooth and positive standard cut off function such that $\psi(s)=1$ for $0 \le s \le 1$ and $\psi(s)=0$ for $s >2$ and $\gamma>0$ fixed. Then for any $i = 1, \dots , N$ 
\begin{equation}\label{hi} h^{(i)} = \sum_{k_1+ \dots + k_i=i} c_{K} \psi^{(k_1)} \cdots \psi^{(k_i)} \psi^{\gamma - i } \end{equation}
where $c_K$ are real coefficients depending on $K=(k_1, \dots, k_i)$, $\gamma$ and $i$. 
Indeed, for $i=1$ one has $h'=\gamma \psi' \psi^{\gamma - 1}$. 
By induction let us assume \eqref{hi}; then 
\begin{align*}
h^{(i+1)} &=  \sum_{k_1+ \dots + k_i=i} c_{K} \psi^{(k_1+1)} \psi^{(k_2)}\cdots \psi^{(k_i)} \psi^{\gamma - i } + \dots \\
& \quad \qquad+  \sum_{k_1+ \dots + k_i=i} c_{K} \psi^{(k_1)} \cdots \psi^{(k_i +1)} \psi^{\gamma - i }  \\
& \quad \qquad+  \sum_{k_1+ \dots + k_i=i} c_{K} \psi^{(k_1)} \cdots \psi^{(k_i)} \psi^{\gamma - i -1} \psi' 
\end{align*}
and thus 
\begin{align*}
h^{(i+1)} &= \sum_{k_1+ \dots + k_i=i} c_{K} \psi^{(k_1+1)} \psi^{(k_2)}\cdots \psi^{(k_i)}\psi^{(0)} \psi^{\gamma - i -1} + \dots \\
& \quad \qquad+  \sum_{k_1+ \dots + k_i=i} c_{K} \psi^{(k_1)} \cdots \psi^{(k_i +1)} \psi^{(0)}\psi^{\gamma - i-1 }  \\
& \quad \qquad+  \sum_{k_1+ \dots + k_i=i} c_{K} \psi^{(k_1)} \cdots \psi^{(k_i)}\psi' \psi^{\gamma - i -1} \\
&=\sum_{k_1+ \dots + k_{i+1}=i+1} c_{K} \psi^{(k_1)} \psi^{(k_2)}\cdots \psi^{(k_{i+1})} \psi^{\gamma - i -1}.
\end{align*}
Hence by \autoref{lemma}
\begin{multline}\label{h}
\Delta^s h(\rho/R)=\sum_{i=1}^{2s} c_i \frac{h^{(i)}(\rho/R)}{R^i \rho^{2s-i}}\\
=\sum_{i=1}^{2s} c_i \frac{1}{R^i \rho^{2s-i}}\sum_{k_1+ \dots + k_i=i} c_{K} \psi^{(k_1)}(\rho/R) \cdots \psi^{(k_i)}(\rho/R) (\psi (\rho/R))^{\gamma - i }. \end{multline}
\end{remark}
\begin{proof}[Proof of \autoref{GZL}]
Let $(u,v) \in L^p_{loc} \times L^q_{loc}$ be a positive weak solution to \eqref{LaneEmden_mn}, namely 
\begin{equation}\label{weak}
\begin{cases}
\begin{aligned}
\int  u (-\Delta)^{\alpha} \varphi  \ge \int  \abs{v}^{q} \varphi  \\
\int v (-\Delta)^{\beta} \psi \ge \int \abs{u}^{p} \psi 
\end{aligned} & \mathbb{R^N}
\end{cases}
\end{equation}
where $\varphi, \psi \in C_0^{\infty}$. Take $\varphi \ge 0$ and call $p'=1-1/p$ the conjugate exponent of $p$, and $q'$ the conjugate exponent of $q$. Then, by \eqref{weak} and the H{\"o}lder inequality, 
\begin{multline}\label{maggiorazioni}
\int \abs{v}^q \varphi \le \int u (-\Delta)^{\alpha} \varphi \le \left( \int \abs{u}^p \varphi \right)^{1/p} \left( \int \frac{\abs{\Delta^{\alpha}\varphi}^{p'}}{\varphi^{p'-1}} \right)^{1/p'}  \\
\le \left( \int v (-\Delta)^{\beta} \varphi \right)^{1/p} \left(\int \frac{\abs{\Delta^{\alpha}\varphi}^{p'}}{\varphi^{p'-1}} \right)^{1/p'} \\
\le \left( \int \abs{v}^q \varphi \right)^{1/pq}  \left( \int \frac{\abs{\Delta^{\beta}\varphi}^{q'}}{\varphi^{q'-1}} \right)^{1/pq'}  \left( \int \frac{\abs{\Delta^{\alpha}\varphi}^{p'}}{\varphi^{p'-1}} \right)^{1/p'}   
\end{multline}
and hence 
\begin{multline}\label{maggcap}
\int \abs{v}^q \varphi \le \left( \int \frac{\abs{\Delta^{\beta}\varphi}^{q'}}{\varphi^{q'-1}} \right)^{\frac{q}{q'(pq-1)}} \left( \int \frac{\abs{\Delta^{\alpha}\varphi}^{p'}}{\varphi^{p'-1}} \right)^{\frac{pq}{p'(pq-1)}}   \\
=\left \{ cap_{\beta}(\varphi, q')^{q-1} cap_{\alpha}(\varphi, p')^{q(p-1)} \right \} ^{\frac{1}{pq-1}}
\end{multline}
where 
\[ cap_{\beta}(\varphi, r)= \int \frac{\abs{\Delta^{\beta}\varphi}^r}{\varphi^{r-1}}. \]

Choose $\varphi(x)=h(\rho/R)$ with $\rho=\abs{x}$ and $h$ as in \autoref{rmkh}.
If $\gamma > 2\beta q'$ then \eqref{h} yields 
\[ \begin{split}
cap&_{\beta}(\varphi, q')= \int \frac{\abs{\Delta^{\beta} \varphi(x)}^{q'}}{(\varphi(x))^{(q'-1)}} \, dx \\
&\le C \int \sum_{j=1}^{2\beta} \sum_{k_1+ \dots + k_j=j}  \frac{\abs{\psi^{(k_1)}(\rho/R) \dots \psi^{(k_j)}(\rho/R)}^{q'}}{R^{jq'}\rho^{q'(2\beta - j)}} \frac{(\psi(\rho/R))^{q'(\gamma - j)}}{(\psi(\rho/R))^{\gamma(q'-1)}} \, dx \\
&= C \int_{\bar{B}_{2R} \setminus B_{R}} \sum_{j=1}^{2\beta} \sum_{k_1+ \dots + k_j=j}  (\psi(\rho/R))^{\gamma-q' j} \frac{\abs{\psi^{(k_1)}(\rho/R) \dots \psi^{(k_j)}(\rho/R)}^{q'}}{R^{jq'}\rho^{q'(2\beta - j)}} \, dx \\
&\le C \int_{\bar{B}_{2R} \setminus B_{R}} \sum_{j=1}^{2\beta} \sum_{k_1+ \dots + k_j=j}  (\psi(\rho/R))^{\gamma-q' j} \frac{\abs{\psi^{(k_1)}(\rho/R) \dots \psi^{(k_j)}(\rho/R)}^{q'}}{R^{2\beta q'}} \, dx.
\end{split}
\]
Hence, by performing the change of variable $x/R \mapsto y$, 
\[
\begin{split}
cap_{\beta}(\varphi, q') &\le C R^N \sum_{\substack{k_1+ \dots + k_j=j \\ j=1, \dots 2 \beta}} \int_{\bar{B}_{2}\setminus B_1} (\psi(\abs{y}) )^{\gamma - q' j}\frac{\abs{\psi^{(k_1)}(\abs{y}) \dots \psi^{(k_j)}(\abs{y})}^{q'}}{R^{2\beta q'}} \, dy \\
&\le \hat C \frac{1}{R^{2\beta q' - N}}.
\end{split}
\]
The last inequality holds true since $\gamma > q' j$ for any $j \le 2 \beta$ and hence \[ (\psi(\abs{y}))^{\gamma - q' j} \abs{\psi^{(k_1)}(\abs{y}) \dots \psi^{(k_j)}(\abs{y})}^{q'}\] is a $C^{\infty}$ function, therefore it is integrable on the annulus $\bar{B}_2 \setminus B_1$; moreover, for $j=1$, it equals to $(\psi(\abs{y}))^{\gamma - q'} \abs{\psi'(\abs{y})} ^{q'}$, which is $\ne 0$ and $\ge 0$ and as a consequence $\hat C \ne 0$.
In a similar fashion one gets that if $\gamma > 2\alpha p'$ then 
\[ cap_{\alpha}(\varphi, p') \le C \frac{1}{R^{2\alpha p' - N}} \]
 where $C$ does not depend on $R$. Therefore by \eqref{maggcap} 
\begin{equation}\label{stimav} \int \abs{v}^q \varphi \le C R^{-(\frac{2\beta q + N + 2\alpha pq - N pq}{pq-1})}. \end{equation}
Hence, if 
\[ 2\beta q + N + 2\alpha pq - N pq>0, \]
by letting $R \to \infty$, one has $u=v=0$. 
Analogously  
\[ \int \abs{u}^p \varphi \le C R^{-(\frac{2\alpha p + N + 2\beta pq - N pq}{pq-1})} \]
and if 
\[ 2\alpha p + N + 2\beta pq - N pq >0 \]
then $u=v=0$. 

Assume now that $p, q$ are on the critical curve, namely
\[ 2\beta q + N + 2\alpha pq - N pq=0 \]
or
\[ 2\alpha p + N + 2\beta pq - N pq =0. \]
Let us consider the first case (the second one is similar): by \eqref{stimav} $v$ is in $L^q$; moreover, 
\[ \int_{B_R} \abs{v}^q \varphi \le \int_{B_R} u (-\Delta)^{\beta} \varphi = 0 \]
and
\begin{multline*} 
\int \abs{v}^q \varphi = \int_{B_R} \abs{v}^q \varphi + \int_{R \le \abs{x} \le 2 R} \abs{v}^q \varphi + \int_{\abs{x} > 2 R} \abs{v}^q \varphi \\
= \int_{R \le \abs{x} \le 2 R} \abs{v}^q \varphi \le  \int_{R \le \abs{x} \le 2 R} \abs{v}^q
\end{multline*}
with $\varphi$ as above. Therefore, since 
\[ \left \{ cap_{\beta}(\varphi, q')^{q-1} cap_{\alpha}(\varphi, p')^{q(p-1)} \right \} ^{\frac{1}{pq-1}} \le C R^{-(\frac{2\beta q + N + 2\alpha pq - N pq}{pq-1})} = C,   \]
by \eqref{maggiorazioni} one has 
\begin{equation}\label{stima2} \int \abs{v}^q \varphi \le C \left( \int_{R \le \abs{x} \le 2 R} \abs{v}^q \right)^{1/pq}=C \left( \int \abs{v}^q \chi_{\{R \le \abs{x} \le 2 R\}} \right)^{1/pq}.
\end{equation}
Hence, by letting $R \to \infty$ in \eqref{stima2}, since $v$ is in $L^q$, by the dominated convergence theorem
one has 
\[ \int \abs{v}^q \le 0\]
and thus $v=0$ which in turn implies $u=0$.
\end{proof}
\begin{remark}
Let us consider the case $N \le 2\alpha$ or $N \le 2\beta$.
If $p,q >1$, then there exists no weak nontrivial solution to \eqref{LaneEmden_mn}.
Indeed, the estimates in the proof above do not depend on the choice of $N$, therefore if 
\[ 2\beta q + N + 2\alpha pq - N pq>0 \]
or
\[ 2\alpha p + N + 2\beta pq - N pq >0 \]
then the unique solution is $0$. However, if $N \le 2\alpha$ then 
\[ 2\beta q  + N + 2\alpha pq - Npq \ge 2 \beta q + N + 2\alpha pq - 2 \alpha pq= 2\beta q + N > 0, \]
whereas if $N \le 2\beta$
\[ 2\alpha p + N + 2\beta pq - N pq >0 \]
and the conclusion follows.
\end{remark}

\section*{Acknowledgements}
The author wishes to thank prof.~Daniele Cassani for the careful reading of this paper and for many useful comments and suggestions. Many thanks go also to the referee for her/his valuable remarks.

\printbibliography

\end{document}